\numberwithin{equation}{section}
\newcounter{dummy}
\newcommand\myitem[1][]{\item[#1]\refstepcounter{dummy}\def\@currentlabel{#1}}
\newtheorem{thm}{Theorem}[section]
\newaliascnt{prp}{thm}
\newtheorem{prp}[prp]{Proposition}
\newaliascnt{cor}{thm}
\newtheorem{cor}[cor]{Corollary}
\newaliascnt{lm}{thm}
\newtheorem{lm}[lm]{Lemma}
\theoremstyle{definition}
\newaliascnt{dfn}{thm}
\newtheorem{dfn}[dfn]{Definition}
\newaliascnt{xpl}{thm}
\newtheorem{xpl}[xpl]{Example}
\newaliascnt{rmk}{thm}
\newtheorem{rmk}[rmk]{Remark}
\newtheorem{ass}{Assumption}
\newtheorem{asss}{Assumption$'$}
\newtheorem{assss}{Assumption$''$}
\author{Tristan Bice and Lisa Orloff Clark}
\address{Institute of Mathematics of the Czech Academy of Sciences, \v{Z}itn\'a 25, 115 67 Prague, Czech Republic}
\email{tristan.bice@gmail.com}
\address{School of Mathematics and Statistics, Victoria University of Wellington, P.O. Box 600, Wellington 6140, New Zealand.}
\email{lisa.clarkf@vuw.ac.nz}
\thanks{The first author is supported by the GA\v{C}R project EXPRO 20-31529X and RVO: 67985840}
\thanks{The second author is supported by a Marsden Fund of the Royal Society of New Zealand.}
\keywords{\'etale, groupoid, semigroup, continuous function, ring, C*-algebra}
\subjclass[2010]{06F05, 18B40, 20M18, 20M25, 22A22, 46L05, 47D03}
\title{Reconstructing \'Etale Groupoids from Semigroups}
\begin{document}

\begin{abstract}
We unify various \'etale groupoid reconstruction theorems such as:
\begin{itemize}
\item[\cite{Kumjian1986} \& \cite{Renault2008}] Kumjian-Renault's reconstruction from a groupoid C*-algebra;
\item[\cite{Exel2010}] Exel's reconstruction from an ample inverse semigroup;
\item[\cite{Steinberg2019}] Steinberg's reconstruction from a groupoid ring; and
\item[\cite{ChoiGardellaThiel2019}] Choi-Gardella-Thiel's reconstruction from a groupoid $L^p$-algebra.
\end{itemize}
We do this by working with certain \emph{bumpy} semigroups $S$ of functions defined on an \'etale groupoid $G$.  The semigroup structure of $S$ together with the diagonal subsemigroup $D$ then yields a natural domination relation $\prec$ on $S$.  The groupoid of $\prec$-ultrafilters is then isomorphic to the original groupoid $G$.
\end{abstract}

\maketitle

\section{Introduction}

\subsection{Motivation}

Continuous functions on \'etale groupoids provide a rich array of non-commutative structures like C*-algebras, $L^p$-algebras, Steinberg algebras and inverse semigroups.  Under suitable conditions, the groupoid can be reconstructed from the corresponding algebraic structure.  For example,  the following groupoid reconstruction result is an immediate Corollary of \cite[Proposition 4.13]{Renault2008}:
\begin{cor}
\label{cor:renault}
Let $G$ and $G'$ be effective second countable locally compact Hausdorff \'etale groupoids.
If $C^*_r(G)$ and $C^*_r(G')$ are diagonally isomorphic C*-algebras, then $G$ and $G'$ are isomorphic \'etale groupoids.
\end{cor}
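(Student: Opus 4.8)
The plan is to realize \autoref{cor:renault} as an instance of the Weyl groupoid reconstruction underlying \cite[Proposition 4.13]{Renault2008}. First I would pin down the relevant \emph{diagonal}: inside $A = C^*_r(G)$ sits the abelian subalgebra $D = C_0(G^{(0)})$ of functions supported on the unit space. The standing hypotheses do the work here — $G$ being second countable, locally compact, Hausdorff and \'etale guarantees that $D$ is regular (its normalizers generate $A$, since compactly supported continuous functions on open bisections are normalizers) and that restriction $A \to D$ is a faithful conditional expectation; effectiveness (equivalently, for second countable groupoids, topological principality) is exactly what forces $D$ to be \emph{maximal} abelian. Thus $(A,D)$ is a Cartan pair, and a \emph{diagonal isomorphism} $\phi : C^*_r(G) \to C^*_r(G')$ — a $*$-isomorphism with $\phi(D) = D'$ — is precisely an isomorphism of Cartan pairs $(A,D) \cong (A',D')$.

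The second step is to pass through the Weyl groupoid. To a Cartan pair $(A,D)$ one associates its Weyl groupoid: the unit space is the spectrum $\widehat{D}$, and the arrows are germs at points of $\widehat{D}$ of the partial homeomorphisms of $\widehat{D}$ implemented by normalizers $n$ of $D$ (via the conjugation coming from $n^* D n$). This construction is functorial in the Cartan pair: an isomorphism $\phi$ carries normalizers to normalizers, restricts to a $*$-isomorphism $D \cong D'$ and hence a homeomorphism $\widehat{D} \cong \widehat{D'}$, and intertwines the implemented partial homeomorphisms, so it descends to an isomorphism of the two Weyl groupoids. By \cite[Proposition 4.13]{Renault2008}, the Weyl groupoid of $(C^*_r(G), C_0(G^{(0)}))$ is canonically isomorphic to $G$ itself, with effectiveness rendering the accompanying twist trivial so that no extension data survives. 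Chaining these identifications gives
\[
G \;\cong\; \mathrm{Weyl}(A,D) \;\cong\; \mathrm{Weyl}(A',D') \;\cong\; G',
\]
as \'etale groupoids.

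The delicate point — and the one I expect to be the main obstacle — is not the abstract functoriality but verifying that the Weyl groupoid genuinely reproduces $G$ rather than a quotient or cover. This is where the effectiveness hypothesis is indispensable: without it the germ relation collapses distinct arrows sharing source, range and local behaviour, and one recovers only the quotient by the interior of the isotropy. Establishing that germs of normalizers separate the arrows of an effective $G$, and that every such germ comes from a genuine groupoid element, is the technical heart of the matter. It is exactly this separation-by-germs phenomenon that the present paper recasts semigroup-theoretically: taking $S$ to be a bumpy semigroup of functions with diagonal $D$, the domination relation $\prec$ encodes the same germ data, the groupoid of $\prec$-ultrafilters plays the role of the Weyl groupoid, and a diagonal isomorphism — being a $\prec$-preserving semigroup isomorphism — then yields the groupoid isomorphism directly.
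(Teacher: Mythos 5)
Your argument is correct, but it is not the paper's own proof: you go through the Kumjian--Renault Cartan-pair/Weyl-groupoid machinery, which is the route the paper merely \emph{cites} when attributing the corollary to \cite[Proposition 4.13]{Renault2008}, whereas the paper actually rederives \autoref{cor:renault} from its own semigroup machinery. Concretely, after \autoref{GroupoidIsomorphism2} the paper views elements of $C^*_r(G)$ as functions on $G$ via \cite[Proposition II.4.2]{Renault1980}, identifies each $a$ with $a|_{\mathrm{supp}(a)}$ so that the convolution product agrees with \eqref{FunctionProduct} on functions supported on bisections, notes that effectiveness forces $S=N$ and that the diagonals are automatically exhaustive, and then applies \autoref{MZbumpy} and the ultrafilter reconstruction of \autoref{GroupoidRecovery}/\autoref{GroupoidIsomorphism} to the $Z$-bumpy semigroup $\mathsf{N}(Z)$. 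The trade-off is real: the paper's route needs no second countability and only a diagonal-preserving \emph{multiplicative} isomorphism (the additive, adjoint and norm structure of the C*-algebras play no role), while your route stays entirely within standard C*-theory at the cost of second countability (needed so that effectiveness is equivalent to topological principality, as Renault's proposition requires) and of using the full $*$-isomorphism. One inaccuracy to flag: effectiveness does not ``render the accompanying twist trivial''; the Weyl twist of $(C^*_r(G),C_0(G^0))$ is trivial simply because one starts from the untwisted algebra (Renault's theorem recovers whatever twist one feeds in), while what effectiveness buys is precisely that the Weyl groupoid is $G$ itself rather than a quotient by interior isotropy. This misattribution is harmless here, since your chain $G\cong\mathrm{Weyl}(A,D)\cong\mathrm{Weyl}(A',D')\cong G'$ never invokes the twist.
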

Other groupoid reconstruction results  appear in
\cite{Bice2019Weyl, BrownClarkanHuef2017, BrownloweCarlsenWhattaker2017, CarlsenRout2018,CarlsenRuizSimsTomforde2017, ChoiGardellaThiel2019, Exel2010, Kumjian1986,Steinberg2019}.
While the algebraic structures involved in each paper differ, what they do share is a product defined in essentially the same way.  This hints at the possibility of unifying these reconstruction theorems and even improving some of them,  by focusing just on the product structure alone.    Indeed, in the commutative case this was essentially already done in \cite{Milgram1949} \textendash\, what we demonstrate is that \cite{Milgram1949} admits a non-commutative extension which unifies these reconstruction theorems.

\subsection{Outline}

We start with some general notational conventions in \autoref{Notation}.  We also observe that semigroup-valued (partial) functions on a groupoid admit a well-defined product given in \eqref{FunctionProduct}, so long as the domain of at least one of the functions is a bisection.  This allows us more freedom than the ring or field-valued (total) functions usually considered elsewhere, and also avoids the need to consider the infinite sums that can arise when dealing with the convolution product.

In \autoref{BumpySemigroups}, we introduce the central concept of a bumpy semigroup.  These are semigroups of functions defined on open bisections satisfying 3 basic properties needed to recover the underlying groupoid.  We then show that bumpy semigroups exist precisely on locally compact \'etale groupoids, at least when the unit space is Hausdorff, and that even stronger compact-bumpy semigroups encompass a wide range of examples, including those arising in the reconstruction theorems in the literature mentioned above.

We next introduce the domination relation in \autoref{Domination}, a transitive relation which can be defined on any abstract semigroup $S$ with a distinguished subsemigroup $D$.  We then characterise the domination relation on compact-bumpy semigroups via the compact containment relation in \eqref{precSubset}.

Next we consider ultrafilters with respect to the domination relation, using them to recover the \'etale groupoid in \autoref{GroupoidRecovery}.  This result, and its more categorical reformulation in \autoref{GroupoidIsomorphism}, is the crux of the paper, from which we derive all our reconstruction theorems.  As noted after \autoref{GroupoidIsomorphism}, Exel's reconstruction theorem follows immediately when the functions take values in the trivial $1$-element semigroup.

The next task is to isolate a bumpy semigroup from a given semigroup (or algebra) of functions defined on arbitrary open subsets.  In \autoref{MN} and \autoref{MZbumpy}, we show how this can be achieved with a purely multiplicative notion of normaliser.  This yields another reconstruction result in \autoref{GroupoidIsomorphism2}, which encompasses the original C*-algebraic reconstruction theorems of Kumjian and Renault, as well as the more recent $L^p$-algebraic reconstruction theorem of Choi-Gardella-Thiel.

In the penultimate section, we deal with more general semigroupoid-valued functions.  By utilising the abundance of $Z$-regular elements in compact-bumpy semigroups on ample groupoids, we obtain another reconstruction theorem in \autoref{GroupoidIsomorphism3}.  We finish this section by noting how this generalises Steinberg's reconstruction theorem and how this applies to groupoids where the fibres of the isotropy group bundle have no non-trivial units.

In the last section, we show how to extend all our results to the graded context.

\subsection{Acknowledgements}

The authors would like to thank Astrid an Huef for several conversations and contributions which helped improve the paper.  The first author also thanks her for her kind hospitality while he was visiting Victoria University of Wellington in January 2020


\section{Notation}\label{Notation}

First let us set out some general notation.

Given $a\subseteq Y\times X$, denote the image of $V\subseteq X$ and preimage of $W\subseteq Y$ by
\begin{align*}
a[V]&=\{y\in Y:\exists v\in V\ ((y,v)\in a)\}.\\
[W]a&=\{x\in X:\exists w\in W\ ((w,x)\in a)\}.
\end{align*}
Denote the domain and range of $a\subseteq Y\times X$ by
\[\mathrm{ran}(a)=a[X]\qquad\text{and}\qquad\mathrm{dom}(a)=[Y]a.\]
The (partial) functions are those $a\subseteq Y\times X$ such that $a[x]$ contains at most one element, for all $x\in X$, which is then denoted by $a(x)$ as usual, i.e. $a[x]=\{a(x)\}$.  The set of all $Y$-valued functions with domains in a family $\mathcal{X}$ is denoted by
\[{}_Y\mathcal{X}=\{a\subseteq Y\times X:\mathrm{dom}(a)\in\mathcal{X}\text{ and }\forall x\in\mathrm{dom}(a)\ (a[x]\text{ is a singleton})\}.\]
For example, $Y$ could be the complex unit disk or circle which we denote by
\begin{align*}
\mathbb{D}&=\{z\in\mathbb{C}:|z|\leq1\}.\\
\mathbb{T}&=\{z\in\mathbb{C}:|z|=1\}.
\end{align*}

Given a groupoid $G$, we denote the source and range of any $g\in G$ by
\[\mathsf{s}(g)=g^{-1}g\qquad\text{and}\qquad\mathsf{r}(g)=gg^{-1}.\]
We denote the units of $G$ by $G^0$ and the composable pairs by $G^2$, i.e.
\begin{align*}
G^0&=\{\mathsf{s}(g):g\in G\}=\{\mathsf{r}(g):g\in G\}\\
G^2&=\{(g,h)\in G\times G:\mathsf{s}(g)=\mathsf{r}(h)\}.
\end{align*}
We denote the \emph{bisections} of $G$ by
\[\mathcal{B}(G)=\{B\subseteq G:BB^{-1}\cup B^{-1}B\subseteq G^0\}.\]
Equivalently, $B\subseteq G$ is a bisection precisely when, for all $g,h\in G$,
\[\mathsf{r}(g)=\mathsf{r}(h)\quad\text{or}\quad\mathsf{s}(g)=\mathsf{s}(h)\qquad\Rightarrow\qquad g=h.\]
The collection of subsets of $G$ is denoted by $\mathcal{P}(G)=\{X:X\subseteq G\}$.

\begin{prp}
For any groupoid $G$, semigroup $Y$ and $a,b\in{}_Y\mathcal{P}(G)$ where either $\mathrm{dom}(a)$ or $\mathrm{dom}(b)$ is a bisection, we can define $ab\in{}_Y\{\mathrm{dom}(a)\mathrm{dom}(b)\}$ by
\begin{equation}\label{FunctionProduct}
ab(gh)=a(g)b(h),\quad\text{for all }(g,h)\in G^2\cap\mathrm{dom}(a)\times\mathrm{dom}(b).
\end{equation}
In particular, this yields a semigroup operation on ${}_Y{\mathcal{B}(G)}$.
\end{prp}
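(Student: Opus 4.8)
The plan is to reduce the entire statement to a single \emph{unique factorisation} property supplied by the bisection hypothesis. I would first record the two source/range identities $\mathsf{r}(gh)=\mathsf{r}(g)$ and $\mathsf{s}(gh)=\mathsf{s}(h)$, valid for every composable pair $(g,h)\in G^2$. Assuming $\mathrm{dom}(a)$ is a bisection (the case of $\mathrm{dom}(b)$ being symmetric), suppose some $k\in\mathrm{dom}(a)\mathrm{dom}(b)$ factors as $k=gh=g'h'$ with $g,g'\in\mathrm{dom}(a)$, $h,h'\in\mathrm{dom}(b)$ and $(g,h),(g',h')\in G^2$. Then $\mathsf{r}(g)=\mathsf{r}(k)=\mathsf{r}(g')$, so the equivalent bisection characterisation forces $g=g'$; left-cancelling $g$ (legitimate since $\mathsf{s}(g)=\mathsf{r}(h)=\mathsf{r}(h')$, so $\mathsf{s}(g)h=h$ and $\mathsf{s}(g)h'=h'$) then yields $h=h'$. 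Hence each $k\in\mathrm{dom}(a)\mathrm{dom}(b)$ has a unique such factorisation, the prescription $ab(gh)=a(g)b(h)$ assigns it exactly one value, and $ab$ is a genuine $Y$-valued function with $\mathrm{dom}(ab)=\mathrm{dom}(a)\mathrm{dom}(b)$, i.e. $ab\in{}_Y\{\mathrm{dom}(a)\mathrm{dom}(b)\}$.

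For the final assertion I would take $a,b\in{}_Y\mathcal{B}(G)$ and set $A=\mathrm{dom}(a)$, $B=\mathrm{dom}(b)$. First I would verify closure, namely $AB\in\mathcal{B}(G)$: given $x=a_1b_1,\,y=a_2b_2\in AB$ with $\mathsf{r}(x)=\mathsf{r}(y)$, the identity $\mathsf{r}(a_ib_i)=\mathsf{r}(a_i)$ gives $\mathsf{r}(a_1)=\mathsf{r}(a_2)$, whence $a_1=a_2$ since $A$ is a bisection, whence $\mathsf{r}(b_1)=\mathsf{s}(a_1)=\mathsf{s}(a_2)=\mathsf{r}(b_2)$, whence $b_1=b_2$ since $B$ is a bisection, so $x=y$; the argument with $\mathsf{s}$ in place of $\mathsf{r}$ is identical, so the stated equivalence confirms $AB$ is a bisection and $ab\in{}_Y\mathcal{B}(G)$. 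Associativity then follows formally: for $a,b,c\in{}_Y\mathcal{B}(G)$ both $(ab)c$ and $a(bc)$ have domain $ABC$ by closure, each of whose elements is uniquely $ghj$ with $g\in A$, $h\in B$, $j\in C$, and evaluating gives $(ab)c(ghj)=(a(g)b(h))c(j)$ and $a(bc)(ghj)=a(g)(b(h)c(j))$, which agree by associativity of $Y$.

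The main obstacle — indeed essentially the only genuine content — is the well-definedness in the first paragraph: without any bisection hypothesis a single $k$ could factor as $gh$ in genuinely distinct composable ways producing different semigroup values $a(g)b(h)$, so the crucial point is that one bisection domain already rigidifies the factorisation completely. Everything after that is bookkeeping, the one thing to watch being that each cancellation I perform multiplies only composable elements; this is automatic once the source/range identities above are in hand, so I do not expect any hidden difficulty there.
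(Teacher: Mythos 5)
Your proof is correct and follows essentially the same route as the paper: the same unique-factorisation argument ($\mathsf{r}(g)=\mathsf{r}(gh)=\mathsf{r}(g'h')=\mathsf{r}(g')$ forces $g=g'$, then cancellation gives $h=h'$) establishes well-definedness, after which closure and associativity follow. You merely spell out two steps the paper leaves as assertions, namely that a product of bisections is a bisection and that associativity of $Y$ transfers to ${}_Y\mathcal{B}(G)$ via unique factorisation.
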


\begin{proof}
Note that when $B$ is a bisection and either $g,g'\in B$ or $h,h'\in B$ then
\[gh=g'h'\qquad\Rightarrow\qquad g=g'\quad\text{and}\quad h=h'\]
(e.g. if $g,g'\in B$ then $gh=g'h'$ implies $\mathsf{r}(g)=\mathsf{r}(gh)=\mathsf{r}(g'h')=\mathsf{r}(g')$ so $g=g'$ and hence $h=g^*gh=g'^*g'h'=h'$).  Thus $ab$ is well-defined by \eqref{FunctionProduct} whenever $\mathrm{dom}(a)$ or $\mathrm{dom}(b)$ is a bisection.  If both $\mathrm{dom}(a)$ and $\mathrm{dom}(b)$ are bisections then so is $\mathrm{dom}(ab)=\mathrm{dom}(a)\mathrm{dom}(b)$ and hence this yields a binary operation on ${}_Y\mathcal{B}(G)$, which is associative because $Y$ is a semigroup.
\end{proof}

\begin{rmk}\label{0rmk}
Note that ${}_Y\mathcal{B}(G)$ always has a zero/absorbing element (namely the empty function $\emptyset$) even when $Y$ itself does not.  Indeed, we are primarily interested in $Y$ satisfying \eqref{1Cancellative} below, which means $Y$ cannot have a zero if $Y$ has at least $2$ elements.  When dealing with a field like $\mathbb{C}$ or, more generally, a domain $R$, we prefer to consider $R\setminus\{0\}$-valued partial functions rather than $R$-valued total functions \textendash\, see the remark below.
\end{rmk}

\begin{rmk}\label{Y^G}
When $R$ is a ring, it is more common to consider $R$-valued functions $_R\{G\}$ defined on the entirety of $G$ under the convolution product
\[ab(f)=\sum_{f=gh}a(g)b(h),\]
at least with some conditions on the functions to ensure the sum above makes sense.  For example, when $\mathrm{supp}(a)=[R\setminus\{0\}]a$ is bisection, the sum always makes sense because it contains at most one non-zero term, in which case $ab(gh)=a(g)b(h)$ whenever $g\in\mathrm{supp}(a)$, $h\in\mathrm{supp}(b)$ and $\mathsf{s}(g)=\mathsf{r}(h)$.  We thus get a semigroup of (total) $R$-valued functions $a$ on $G$ with bisection supports, which we can identify with their restrictions $a|_{\mathrm{supp}(a)}$.  As long as $R$ is a domain, i.e. as long as $R\setminus\{0\}$ is a subsemigroup of $Y$, this restriction map yields an isomorphism with the semigroup of (partial) $(R\setminus\{0\})$-valued functions defined on bisections of $G$.
\end{rmk}

Given a topology $\mathcal{O}(G)$ on $G$, we denote the open bisections by
\[\mathcal{B}^\circ(G)=\mathcal{B}(G)\cap\mathcal{O}(G).\]
We call $G$ \emph{\'etale} if the involution $g\mapsto g^{-1}$ and product $(g,h)\mapsto gh$ are continuous on $G$ and $G^2$ respectively,  that is, $G$ is a topological groupoid,  and the source is a local homeomorphism onto an open subset of $G$.  By \cite[Theorem~5.8]{Resende2007},  if $G$ is a topological groupoid then $G$ is \'etale when the source map is an open map.   Equivalently, $G$ is \'etale if  we have a basic inverse semigroup $\mathcal{B}\subseteq\mathcal{B}^\circ(G)$, meaning $\mathcal{B}$ is both a basis for the topology and an inverse semigroup with the usual operations, i.e.
\[O,N\in\mathcal{B}\qquad\Rightarrow\qquad ON\in\mathcal{B}\quad\text{and}\quad O^{-1}\in\mathcal{B}.\]
 (see \cite[Proposition 6.6]{BiceStarling2018}, which is based on \cite{Resende2007}).

The following observation will be useful later on.

\begin{prp}\label{CO}
If $G$ is an \'etale groupoid, $O\in\mathcal{B}^\circ(G)$ and $C\subseteq G$ then
\[\mathsf{r}[C]\subseteq\mathsf{s}[O]\text{ and }C\text{ is compact}\qquad\Rightarrow\qquad OC\text{ is compact}.\]
\end{prp}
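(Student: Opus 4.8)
The plan is to exhibit $OC$ as the continuous image of the compact set $C$ under a suitable map, so that compactness follows at once from the continuity of the groupoid operations. The map I have in mind sends each $h\in C$ to the unique element of $OC$ ``above'' it, and the work is to see that such a map exists and is continuous.

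First I would use the hypotheses on $O$ to produce a continuous section of the source. Since $G$ is \'etale, $\mathsf{s}$ is a local homeomorphism, and since $O\in\mathcal{B}^\circ(G)$ is an open bisection, $\mathsf{s}$ is injective on $O$; combining these, $\mathsf{s}|_O$ is a homeomorphism from $O$ onto the open set $\mathsf{s}[O]$. I would then let $\phi=(\mathsf{s}|_O)^{-1}\colon\mathsf{s}[O]\to O$ be its continuous inverse, so that $\phi(u)$ is the unique $g\in O$ with $\mathsf{s}(g)=u$ for each $u\in\mathsf{s}[O]$. This is the step that genuinely uses both standing hypotheses, and I expect it to be the crux: once the continuous section $\phi$ is available, the rest is bookkeeping.

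Next, using $\mathsf{r}[C]\subseteq\mathsf{s}[O]$, I would define $f\colon C\to G$ by $f(h)=\phi(\mathsf{r}(h))h$. This is well defined because $\mathsf{r}(h)\in\mathsf{s}[O]$ makes $\phi(\mathsf{r}(h))$ meaningful, and because $\mathsf{s}(\phi(\mathsf{r}(h)))=\mathsf{r}(h)$ shows $(\phi(\mathsf{r}(h)),h)\in G^2$, so the product is legitimate. The key identity to record is $f(C)=OC$: on one hand $f(h)\in OC$ by construction, while on the other hand any $gh\in OC$ with $g\in O$, $h\in C$ and $\mathsf{s}(g)=\mathsf{r}(h)$ must have $g=\phi(\mathsf{r}(h))$ by injectivity of $\mathsf{s}|_O$, so $gh=f(h)$.

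Finally I would check that $f$ is continuous and conclude. The range map factors as $\mathsf{r}=\mathsf{s}\circ({}^{-1})$ and is continuous because the involution $g\mapsto g^{-1}$ and the source are; $\phi$ is continuous by construction; and multiplication is continuous on $G^2$. Hence $f$, being the composite $h\mapsto(\phi(\mathsf{r}(h)),h)\mapsto\phi(\mathsf{r}(h))h$, is continuous, and therefore $OC=f(C)$ is compact as the continuous image of the compact set $C$. The verifications in this last paragraph are routine, so the only delicate point remains the construction of $\phi$ from the bisection and \'etale hypotheses.
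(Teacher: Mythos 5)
Your proof is correct, but it takes a genuinely different route from the paper's. The paper verifies compactness directly with nets: given a net $(g_\lambda c_\lambda)$ in $OC$, compactness of $C$ gives a subnet with $c_{f(\gamma)}\to c\in C$; then $\mathsf{s}(g_{f(\gamma)})=\mathsf{r}(c_{f(\gamma)})\to\mathsf{r}(c)=\mathsf{s}(g)$ for the unique $g\in O$ with $\mathsf{s}(g)=\mathsf{r}(c)$ (this is where $\mathsf{r}[C]\subseteq\mathsf{s}[O]$ enters), and since $\mathsf{s}$ is a homeomorphism on $O$ it follows that $g_{f(\gamma)}\to g$, whence $g_{f(\gamma)}c_{f(\gamma)}\to gc\in OC$ by continuity of multiplication. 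You instead exhibit $OC$ once and for all as the image $f[C]$ of the continuous map $f(h)=\phi(\mathsf{r}(h))h$, where $\phi=(\mathsf{s}|_O)^{-1}$. Both arguments pivot on exactly the same key fact \textendash\, that $\mathsf{s}|_O$ is a homeomorphism of $O$ onto the open set $\mathsf{s}[O]$, which is where the \'etale and bisection hypotheses are used \textendash\, and your section $\phi$ is precisely what the paper applies implicitly when it converts convergence of the sources into convergence of the $g_{f(\gamma)}$. What your packaging buys: no subnet bookkeeping, compactness reduced to the elementary fact that continuous images of compact sets are compact, and a little extra structure for free (by the cancellation observation in the paper's first proposition, $gh=g'h'$ with $g,g'$ in a bisection forces $g=g'$ and $h=h'$, so your $f$ is in fact a continuous bijection of $C$ onto $OC$). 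What the paper's version buys: it needs no well-definedness or surjectivity check for an auxiliary map, and it is uniform with the net arguments used elsewhere in the paper, such as in the proof of \autoref{Bumpy=>LCetale}.
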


\begin{proof}
Take any net $(g_\lambda c_\lambda)\subseteq OC$.  As $C$ is compact, $(c_\lambda)$ has a subnet $(c_{f(\gamma)})$ converging to $c\in C$.  Thus $\mathsf{s}(g_{f(\gamma)})=\mathsf{r}(c_{f(\gamma)})\rightarrow\mathsf{r}(c)=\mathsf{s}(g)$, for some $g\in O$.  As $\mathsf{s}$ is a homeomorphism on $O$, $g_{f(\gamma)}\rightarrow g$ and hence $g_{f(\gamma)}c_{f(\gamma)}\rightarrow gc\in OC$.
\end{proof}

Denote the compact and compact open bisections of $G$ by $\mathcal{B}_\mathsf{c}(G)$ and $\mathcal{B}^\circ_\mathsf{c}(G)$ respectively.  We call $G$ \emph{ample} if it has a basic inverse semigroup $\mathcal{B}\subseteq\mathcal{B}^\circ_\mathsf{c}(G)$.  As $ON=O\cap N$ for any $O,N\in\mathcal{O}(G^0)$, it follows that $G^0$ is coherent and locally compact whenever $G$ is ample.  Standard results in non-Hausdorff topology (see \cite{Goubault2013}) then show that, if $G$ is ample,
\[G^0\text{ is sober and }T_1\qquad\Leftrightarrow\qquad G^0\text{ is Hausdorff}\qquad\Leftrightarrow\qquad G\text{ is locally Hausdorff}.\]
Indeed, when $G^0$ is Hausdorff, $G$ is ample if (and only if) $G$ is \'etale and has a basis of compact open bisections \textendash\, then a product of compact bisections is again compact, by \cite[Proposition 6.4]{BiceStarling2018}, and hence $\mathcal{B}_\mathsf{c}^\circ(G)$ is a basic inverse semigroup.


\section{Bumpy Semigroups}\label{BumpySemigroups}

In \autoref{GroupoidReconstruction}, we will show how to reconstruct an \'etale groupoid $G$ from a semigroup $S$ of `bump' functions on bisections of $G$.  For our reconstruction to work, we need some general conditions on $S$ which we introduce in the present section.  We start with some basic standing assumptions.

\begin{ass}\label{Gass}\
\begin{enumerate}
\item $G$ is both a groupoid and a topological space with Hausdorff unit space $G^0$.
\item $Y$ is unital semigroup that is $1$-cancellative, i.e. for all $x,y\in Y$,
\[\label{1Cancellative}\tag{$1$-Cancellative}xy=x\qquad\Leftrightarrow\qquad y=1\qquad\Leftrightarrow\qquad yx=x.\]
\end{enumerate}
\end{ass}

We denote the invertible elements of $Y$ by
\[Y^\times=\{y\in Y:\exists y^{-1}\in Y\ (yy^{-1}=1=y^{-1}y)\}.\]
The interior of $X\subseteq G$ is denoted by $\mathrm{int}(X)=\bigcup\{V\in\mathcal{O}(G):V\subseteq X\}$.

\begin{dfn}\label{BumpyDef}
We call $S\subseteq{}_Y\mathcal{B}^\circ(G)$ \emph{bumpy} if it satisfies all of the following.
\begin{itemize}
\myitem[($1$-Proper)]\label{1Proper} $[1]a$ is compact, for all $a\in S$.
\myitem[(Urysohn)]\label{Urysohn} The empty function $\emptyset$ is in $S$ and for all $O\in\mathcal{O}(G)$ and $g \in G$, there exists  $a\in S$ such that $\mathrm{dom}(a)\subseteq O$ and $g \in\mathrm{int}([Y^\times]a)$.
\myitem[(Involutive)]\label{Involutive} Whenever $a\in S$ and $g\in\mathrm{int}([Y^\times]a)$, there exists $b\in S$ such that
\[g^{-1}\in\mathrm{int}(\{h\in G:a(h^{-1})=b(h)^{-1}\}).\]
\end{itemize}
\end{dfn}

In other words \ref{Involutive} says that, whenever $a\in S$ takes invertible values on a neighbourhood of $g$, we have $b\in S$ which realises these inverses on a neighbourhood of $g^{-1}$.

\begin{rmk}
Say $S\subseteq{}_Y\mathcal{B}^\circ(G)$ and, whenever $g\in O\in\mathcal{O}(G)$, we have $a\in S$ with
\begin{equation}\label{Urysohn'}
\tag{Urysohn$'$}\mathrm{dom}(a)\subseteq O\qquad\text{and}\qquad g\in\mathrm{int}([Y^\times]a).
\end{equation}
If $G^0$ is open, Hausdorff and contains at least two distinct points $g,h\in G^0$ then \eqref{Urysohn'} yields $a,b\in S$ with disjoint domains in $G^0$.  If $S$ is also a semigroup then $\emptyset=ab\in S$, so in this case \eqref{Urysohn'} implies \ref{Urysohn}.
\end{rmk}

We are primarily interested in the case when $G$ is both locally compact and \'etale \textendash\, otherwise there will be no bumpy semigroups on $G$.

\begin{thm}\label{Bumpy=>LCetale}
If $S\subseteq{}_Y\mathcal{B}^\circ(G)$ is a bumpy semigroup then $G$ is locally compact and \'etale.  Conversely, if $G$ is locally compact and \'etale then we have a bumpy semigroup
\[S=\bigcup_{O\in\mathcal{B}^\circ(G)}C_0(O,\mathbb{D}\setminus\{0\})\ \ \subseteq\ \ {}_{\mathbb{D}\setminus\{0\}}\mathcal{B}^\circ(G).\]
\end{thm}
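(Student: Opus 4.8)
The plan is to establish the two implications separately; throughout I read $a\in C_0(O,\mathbb{D}\setminus\{0\})$ as a partial function with open domain $\mathrm{dom}(a)\subseteq O$, continuous and $(\mathbb{D}\setminus\{0\})$-valued, whose extension by $0$ lies in $C_0(O,\mathbb{D})$, equivalently with $\{z\in\mathrm{dom}(a):|a(z)|\geq\epsilon\}$ compact for all $\epsilon>0$. For the forward implication the subtle point is that \autoref{Gass}(1) only gives a groupoid carrying a topology, so the continuity of the groupoid operations must itself be recovered; my plan is to produce a basic inverse semigroup of open bisections and then invoke the criterion recalled above (\cite[Proposition~6.6]{BiceStarling2018}). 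First, \ref{Urysohn} shows the open bisections form a basis: for $g\in O\in\mathcal{O}(G)$ it supplies $a\in S$ with $g\in\mathrm{int}([Y^\times]a)\subseteq\mathrm{dom}(a)\subseteq O$. I would then check $\mathcal{B}^\circ(G)$ is closed under the subset operations. For inverses: given an open bisection $O\ni g$, take (via \ref{Urysohn}) $a\in S$ with $g\in\mathrm{int}([Y^\times]a)$ and $\mathrm{dom}(a)\subseteq O$, and apply \ref{Involutive} to obtain $b\in S$ with $g^{-1}\in\mathrm{int}(W)$ where $W=\{h:a(h^{-1})=b(h)^{-1}\}$; since $h\in W$ forces $h^{-1}\in\mathrm{dom}(a)\subseteq O$ we get $\mathrm{int}(W)\subseteq O^{-1}$, so $O^{-1}$ is open. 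For products: given open bisections $O,N$ and composable $g\in O$, $h\in N$, choose $a,b\in S$ via \ref{Urysohn} with $g\in\mathrm{dom}(a)\subseteq O$ and $h\in\mathrm{dom}(b)\subseteq N$; as $S$ is a semigroup, $ab\in S\subseteq{}_Y\mathcal{B}^\circ(G)$, so by \eqref{FunctionProduct} the set $\mathrm{dom}(a)\mathrm{dom}(b)=\mathrm{dom}(ab)$ is an open bisection containing $gh$ and contained in $ON$; hence $ON$ is open. Thus $\mathcal{B}^\circ(G)$ is a basic inverse semigroup and $G$ is étale.

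For local compactness I would, with $a,b,W$ as above, put $c=ab\in S$ and note that for $y\in\mathrm{int}(W)$ the pair $(y^{-1},y)$ is composable with $y^{-1}y=\mathsf{s}(y)$, so $c(\mathsf{s}(y))=a(y^{-1})b(y)=b(y)^{-1}b(y)=1$. Hence $\mathsf{s}[\mathrm{int}(W)]\subseteq[1]c$, and since $G$ is now known to be étale, $\mathsf{s}$ is open, so $\mathsf{s}[\mathrm{int}(W)]$ is an open neighbourhood of $\mathsf{s}(g^{-1})=\mathsf{r}(g)$. As $[1]c$ is compact by \ref{1Proper}, it is a compact neighbourhood of $\mathsf{r}(g)$; letting $g$ vary, $G^0$ is locally compact, and pulling compact neighbourhoods back through the homeomorphisms $\mathsf{s}|_O$ ($O$ an open bisection) shows $G$ is locally compact.

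For the converse, note $Y^\times=(\mathbb{D}\setminus\{0\})^\times=\mathbb{T}$, so $[Y^\times]a=\{|a|=1\}$. That $S$ is a semigroup: for $a,b\in S$, $\mathrm{dom}(ab)=\mathrm{dom}(a)\mathrm{dom}(b)$ is an open bisection (as $G$ is étale), the values lie in $\mathbb{D}\setminus\{0\}$, and $ab$ is continuous because multiplication restricts to a homeomorphism on bisections; for the decay, $|ab|=|a|\,|b|$ gives $\{|ab|\geq\epsilon\}\subseteq K_aK_b$ with $K_a=\{|a|\geq\epsilon\}$ and $K_b=\{|b|\geq\epsilon\}$ compact, and $K_aK_b$ is the continuous image of the compact set $(K_a\times K_b)\cap G^2$ (compact since $G^2$ is closed, $G^0$ being Hausdorff), hence $\{|ab|\geq\epsilon\}$ is compact. \ref{1Proper} holds since $[1]a\subseteq\{|a|=1\}$ is closed in $\mathrm{dom}(a)$ and sits inside the compact $\{|a|\geq 1/2\}$. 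For \ref{Urysohn}, $\emptyset\in S$, and given $g\in O$ I pick an open bisection $B$ with $g\in B\subseteq O$, which is locally compact Hausdorff via $\mathsf{s}|_B$, so Urysohn's lemma yields continuous $f\colon B\to[0,1]$ with compact support in $B$ and $f\equiv 1$ on a neighbourhood $V$ of $g$; then $a:=f|_{\{f>0\}}\in C_0(O,\mathbb{D}\setminus\{0\})$ has $g\in V\subseteq\mathrm{int}([Y^\times]a)$. For \ref{Involutive}, on $U=\mathrm{int}([Y^\times]a)$ the function $a$ is $\mathbb{T}$-valued; using that inversion is a homeomorphism I set $c(h)=a(h^{-1})^{-1}$ on the open bisection $U^{-1}$, multiply by a bump $\phi$ equal to $1$ on a neighbourhood $V$ of $g^{-1}$ and compactly supported in $U^{-1}$, and take $b=(\phi c)|_{\{\phi>0\}}\in S$; on $V$ then $b(h)^{-1}=a(h^{-1})$, so $g^{-1}\in V\subseteq\mathrm{int}(\{h:a(h^{-1})=b(h)^{-1}\})$.

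The hard part is the forward implication, exactly because continuity of the groupoid operations is not hypothesised and must be manufactured: the argument turns on two observations, that semigroup-closure of $S$ forces the set-theoretic product $\mathrm{dom}(a)\mathrm{dom}(b)$ to be open (being $\mathrm{dom}(ab)$ for $ab\in S$) and that \ref{Involutive} forces set-theoretic inverses of domains to be open, after which the basic-inverse-semigroup criterion supplies the étale structure and \ref{1Proper} supplies compact neighbourhoods. The converse is comparatively routine, the only points of care being that open bisections are locally compact Hausdorff (so ordinary bump functions exist) and that products of the compact sets $\{|a|\geq\epsilon\}$ remain compact.
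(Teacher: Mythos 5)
Most of your argument is sound and follows the same route as the paper: \ref{Urysohn} produces the basis of open bisections, closure of $S$ under the product makes $\mathrm{dom}(a)\mathrm{dom}(b)=\mathrm{dom}(ab)$ open, \ref{Involutive} makes inverses of open sets open, and the basic-inverse-semigroup criterion then yields \'etaleness; your converse is also the paper's construction, with two harmless variants (a direct proof that $K_aK_b$ is compact, where the paper cites \cite[Proposition 6.4]{BiceStarling2018}, and a cut-off $b=(\phi c)|_{\{\phi>0\}}$ where the paper simply takes $b=a^*$ globally).

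The genuine gap is in the local compactness step. You show $\mathsf{s}[\mathrm{int}(W)]\subseteq[1]c$ and conclude that the compact set $[1]c$ ``is a compact neighbourhood of $\mathsf{r}(g)$; letting $g$ vary, $G^0$ is locally compact''. But $[1]c=[1]ab$ is in general \emph{not} contained in $G^0$: its elements are products $g'h'$ with $g'\in\mathrm{dom}(a)$, $h'\in\mathrm{dom}(b)$ and $a(g')b(h')=1$, and \ref{Involutive} places no constraint on $b$ away from a neighbourhood of $g^{-1}$, so nothing forces $h'=g'^{-1}$. Thus $[1]c$ is a compact neighbourhood of $\mathsf{r}(g)$ in $G$ only, and this does not yield local compactness of the space $G^0$: you cannot simply intersect with $G^0$ (which is open but generally not closed in the possibly non-Hausdorff $G$, so the intersection need not be compact), and your final pull-back through $\mathsf{s}|_O$ needs compact neighbourhoods that genuinely lie in $G^0$, indeed inside $\mathsf{s}[O]$. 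The missing idea, which is exactly the point where the paper takes care, is to push the compact set forward into $G^0$ by the continuous and (now that \'etaleness is established) open map $\mathsf{r}$: the set $\mathsf{r}[[1]c]$ is compact, lies in $G^0$, and contains $\mathsf{r}\bigl[\mathsf{s}[\mathrm{int}(W)]\bigr]=\mathsf{s}[\mathrm{int}(W)]$, hence is a compact neighbourhood of $\mathsf{r}(g)$ in $G^0$. Moreover, running the construction for a unit $g$ with $\mathrm{dom}(a)\subseteq O\in\mathcal{O}(G^0)$ gives $\mathsf{r}[[1]ab]\subseteq\mathsf{r}[\mathrm{dom}(a)]\subseteq O$, so these compact neighbourhoods can be fitted inside any prescribed open subset of $G^0$, which is what the transfer to all of $G$ through the bisection homeomorphisms actually requires (alternatively, shrink using Hausdorffness of $G^0$ once its local compactness is known). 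With this replacement your argument goes through and coincides with the paper's.
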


\begin{proof}
First note that $G$ has a basis of open bisections, namely $\{\mathrm{dom}(a):a\in S\}$.  Indeed, if $g\in O\in\mathcal{O}(G)$ then \ref{Urysohn} yields $a\in S$ with $\mathrm{dom}(a)\subseteq O$ and $g\in\mathrm{int}([Y^\times]a)$ so, in particular, $g\in\mathrm{dom}(a)\in\mathcal{B}^\circ(G)$.  It follows that the product is an open map on $G$, as $S$ is a semigroup and $\mathrm{dom}(a)\mathrm{dom}(b)=\mathrm{dom}(ab)\in\mathcal{B}^\circ(G)$.

Next we claim that the involution $g\mapsto g^{-1}$ is an open map.  To see this, take any $g\in O\in\mathcal{O}(G)$, so \ref{Urysohn} yields $a\in S$ with $\mathrm{dom}(a)\subseteq O$ and $g\in\mathrm{int}([Y^\times]a)$.  Then \ref{Involutive} yields $b\in S$ with
\[g^{-1}\in\mathrm{int}(\{h\in G:a(h^{-1})=b(h)^{-1}\})\subseteq([Y^\times]a)^{-1}\subseteq\mathrm{dom}(a)^{-1}\subseteq O^{-1},\]
showing $O^{-1}$ is open.  Thus $\mathcal{B}^\circ(G)$ is a basic inverse semigroup, i.e. $G$ is \'etale.

To show $G$ is locally compact, it thus suffices to show that $G^0$ is locally compact.  Accordingly, take $g\in O\in\mathcal{O}(G^0)$.  As above, \ref{Urysohn} and \ref{Involutive} yield $a,b\in S$ with $\mathrm{dom}(a)\subseteq O$ and $g=gg^{-1}\in\mathrm{int}([1]ab)$.  As $C=[1]ab$ is compact and $\mathsf{r}$ is continuous, $\mathsf{r}[C]$ is also compact.  As $\mathsf{r}$ is an open map, $\mathsf{r}[\mathrm{int}(C)]$ is an open subset containing $g$, i.e. $\mathsf{r}[C]$ is a compact neighbourhood of $g$ in $G^0$.  Moreover, $\mathsf{r}[C]\subseteq\mathsf{r}[\mathrm{dom}(a)\mathrm{dom}(b)]\subseteq\mathsf{r}[\mathrm{dom}(a)]\subseteq O$, showing that $G^0$ is locally compact.

Conversely, suppose that $G$  is locally compact and \'etale.  Since $G^0$ is Hausdorff, any open bisection $O$ of $G$ is Hausdorff and thus a locally compact Hausdorff subspace.  Consequently, we can consider the set $C_0(O,\mathbb{D}\setminus\{0\})$ of continuous $\mathbb{D}\setminus\{0\}$-valued functions on $O$  which vanish at infinity.

We claim that
\[
S=\bigcup\{C_0(O,\mathbb{D}\setminus\{0\}):O\subseteq G\text{\ is an open bisection}\}
\]
is a semigroup with respect  to  the operation defined at \eqref{FunctionProduct}.
Let $a, b\in S$. Then $\mathrm{dom}(a)$ and  $\mathrm{dom}(b)$ are open bisections. Since $G$ is \'etale, multiplication is open and hence $\mathrm{dom}(ab)=\mathrm{dom}(a)\mathrm{dom}(b)$ is an open bisection. Let $(f_\lambda)=(g_\lambda h_\lambda)$ be a net in $\mathrm{dom}(ab)$ converging to $f=gh\in\mathrm{dom}(ab)$.  Then $\mathsf{r}(g_\lambda)\to \mathsf{r}(g)$. Since $\mathsf{r}$ is a homeomorphism on $\mathrm{dom}(a)$ we get that $g_\lambda\to g$. Similarly, $h_\lambda\to h$. Since $a$ and $b$ are continuous,  $a(g_\lambda)\to a(g)$ and $b(g_\lambda)\to b(g)$. Thus $ab(f_\lambda)=a(g_\lambda)b(h_\lambda)\to a(g)b(h)=ab(f)$, and  $ab$ is continuous.

Next, we check that $ab$ vanishes at infinity. For this, fix $\epsilon>0$. We need to show that $\{f\in \mathrm{dom}(ab): |ab(g)|\geq \epsilon\}$ is a compact subset of $\mathrm{dom}(ab)$. For $f=gh\in \mathrm{dom}(a)\mathrm{dom}(b)$, if
$|a(g)b(h)|\geq \epsilon$ then $|a(g)|\geq\epsilon/\|b\|_\infty$  and $|b(h)|\geq \epsilon/\|a\|_\infty$. Thus
\begin{align*}
\{gh&\in\mathrm{dom}(a)\mathrm{dom}(b): |ab(gh)|\geq\epsilon\}\\
&\subseteq\{g\in\mathrm{dom}(a): |a(g)|\geq\epsilon/\|b\|_\infty\}\{h\in\mathrm{dom}(b): |b(h)|\geq\epsilon/\|a\|_\infty\};
\end{align*}
this is a product of two compact sets in $G$ because both $a$ and $b$ vanish at infinity.
 Since $G^0$ is Hausdorff, a product of compact sets in $G$ is again compact \cite[Proposition~6.4]{BiceStarling2018}.   Now $\{f\in \mathrm{dom}(ab): |ab(g)|\geq \epsilon\}$ is a closed  subset of a compact set, and hence is compact. Thus $ab$ vanishes at infinity and hence $ab\in S$. Thus $S$ is a semigroup.

Next, we verify that $S$ is bumpy.
Let $a\in S$. Then $[1]a$ is compact because $a$ vanishes at infinity. This gives \ref{1Proper}. Trivially the empty subset of $G$ is a bisection and hence the empty function  is in $S$. So fix $O\in\mathcal{O}$ and   $g\in G$ with $g\in O$. Since $G$ is \'etale, the open bisections of $G$ are a basis and there exists an open bisection $M$ such that $g\in M\subseteq O$. Since $G$ is locally compact, there exists a compact neighbourhood $C$ with $g\in \mathrm{int}(C)\subseteq C\subseteq M$. Since the unit space is Hausdorff, so is the open bisection $M$.  Now $M$ is locally compact Hausdorff and hence completely regular, which means we have a continuous $[0,1]$-valued function $a$ which is $1$ in a neighbourhood of $g$ and zero outside of $\mathrm{int}(C)$.  In particular, $\mathrm{dom}(a)\subseteq O$ and $g\in\mathrm{int}([\mathbb{T}]a)$, giving \ref{Urysohn}. Finally, given $a\in S$ and $g\in\mathrm{int}([\mathbb{T}]a)$ we take $b=a^*$, i.e. $\mathrm{dom}(b)=\mathrm{dom}(a)^{-1}$ and $b(h)=\overline{a(h^{-1})}$, for all $h\in\mathrm{dom}(b)$.  Then
\[g^{-1}\in\mathrm{int}([\mathbb{T}]a)^{-1}=\mathrm{int}(\{h\in G:a(h^{-1})=b(h)^{-1}\})\]
so \ref{Involutive} holds and thus $S$ is a bumpy semigroup.
\end{proof}

Actually, there are various bumpy semigroups on any locally compact \'etale $G$.  For example, we could have restricted the functions above in $S$ to those taking real values or those with compact domains.  Alternatively, we could have used more general functions with values in $\mathbb{C}\setminus\{0\}$, which corresponds to \cite{Kumjian1986}, \cite{Renault2008} and \cite{ChoiGardellaThiel2019} (see the comments after \autoref{GroupoidIsomorphism2}).  The functions could even take values in $A^\times$ for some Banach algebra $A$.

We also have the following analog for ample groupoids.

\begin{thm}\label{Bumpy=>LCample}
If $S\subseteq{}_Y\mathcal{B}^\circ_\mathsf{c}(G)$ is a bumpy semigroup then $G$ is ample.  Conversely, if $G$ is ample then we have a bumpy semigroup $S={}_{\{1\}}\mathcal{B}^\circ_\mathsf{c}(G)$.
\end{thm}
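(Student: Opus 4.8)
The plan is to prove Theorem~\ref{Bumpy=>LCample} by leveraging the corresponding étale result (Theorem~\ref{Bumpy=>LCetale}) and then upgrading ``locally compact étale'' to ``ample'' using the extra information that the domains of elements of $S$ are \emph{compact} open bisections. For the forward direction, suppose $S\subseteq{}_Y\mathcal{B}^\circ_\mathsf{c}(G)$ is bumpy. Since $\mathcal{B}^\circ_\mathsf{c}(G)\subseteq\mathcal{B}^\circ(G)$, the semigroup $S$ is in particular a bumpy subsemigroup of ${}_Y\mathcal{B}^\circ(G)$, so Theorem~\ref{Bumpy=>LCetale} immediately gives that $G$ is locally compact and étale. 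It then remains to produce a basic inverse semigroup consisting of \emph{compact} open bisections. The natural candidate is $\{\mathrm{dom}(a):a\in S\}$: this was already shown in the proof of Theorem~\ref{Bumpy=>LCetale} to be a basis of open bisections, and now each $\mathrm{dom}(a)$ is compact by hypothesis. The key point to verify is that this family (or its closure under products and inverses) is a \emph{basis} of compact open bisections. Given $g\in O\in\mathcal{O}(G)$, property \ref{Urysohn} supplies $a\in S$ with $g\in\mathrm{int}([Y^\times]a)\subseteq\mathrm{dom}(a)\subseteq O$, and $\mathrm{dom}(a)$ is a compact open bisection, so the compact open bisections indeed form a basis. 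By the remark at the end of the Notation section, since $G^0$ is Hausdorff, $G$ being étale with a basis of compact open bisections makes $\mathcal{B}^\circ_\mathsf{c}(G)$ a basic inverse semigroup, hence $G$ is ample.

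For the converse, suppose $G$ is ample and set $S={}_{\{1\}}\mathcal{B}^\circ_\mathsf{c}(G)$, the set of functions with compact open bisection domains all taking the single value $1$. Since $Y=\{1\}$ is the trivial unital semigroup (which is vacuously $1$-cancellative), such a function is determined entirely by its domain, so $S$ is essentially just $\mathcal{B}^\circ_\mathsf{c}(G)$ itself viewed as a semigroup of functions; the product \eqref{FunctionProduct} corresponds to the product of compact open bisections, which lands back in $\mathcal{B}^\circ_\mathsf{c}(G)$ precisely because $G$ is ample (products of compact bisections remain compact by \cite[Proposition 6.4]{BiceStarling2018} since $G^0$ is Hausdorff). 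Hence $S$ is a semigroup. The three bumpy axioms are then routine: \ref{1Proper} holds because $[1]a=\mathrm{dom}(a)$ is compact by definition; \ref{Urysohn} holds because the compact open bisections form a basis, so for $g\in O$ we may pick a compact open bisection $B$ with $g\in B\subseteq O$ and take $a$ to be the constant function $1$ on $B$, noting that $[Y^\times]a=[\{1\}]a=\mathrm{dom}(a)=B$ is open so $g\in\mathrm{int}([Y^\times]a)$; and \ref{Involutive} holds by taking $b=a^{-1}$, the constant $1$ function on $\mathrm{dom}(a)^{-1}$, since in the trivial semigroup every value equals its own inverse, making the set $\{h:a(h^{-1})=b(h)^{-1}\}$ equal to the open set $\mathrm{dom}(a)^{-1}$.

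I expect the main (though still modest) obstacle to be the bookkeeping in the forward direction around whether $\{\mathrm{dom}(a):a\in S\}$ needs to be closed under products and inverses to qualify as a basic inverse semigroup, or whether it suffices to invoke the general principle that $G$ étale with a Hausdorff unit space and a basis of compact open bisections is automatically ample. The cleanest route is the latter: rather than verifying the inverse semigroup axioms for $\{\mathrm{dom}(a):a\in S\}$ directly, I would simply observe that this family witnesses that $G$ has a basis of compact open bisections, and then quote the characterisation recorded at the end of \autoref{Notation} (that for étale $G$ with Hausdorff $G^0$, having a basis of compact open bisections is equivalent to ampleness, with $\mathcal{B}^\circ_\mathsf{c}(G)$ serving as the basic inverse semigroup). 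This sidesteps any delicate verification that the specific domains arising from $S$ are closed under the inverse semigroup operations.
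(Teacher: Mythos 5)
Your proposal is correct and follows essentially the same route as the paper: the forward direction applies \autoref{Bumpy=>LCetale} and then observes that $\{\mathrm{dom}(a):a\in S\}$ is a basis of compact open bisections (using the remark in \autoref{Notation} that, for \'etale $G$ with Hausdorff $G^0$, this makes $\mathcal{B}^\circ_\mathsf{c}(G)$ a basic inverse semigroup), while the converse identifies $\mathcal{B}^\circ_\mathsf{c}(G)$ with its characteristic functions and checks \ref{1Proper}, \ref{Urysohn} and \ref{Involutive} via compactness, the basis property and closure under inverses, exactly as the paper does. Your write-up merely fills in slightly more detail (e.g.\ the semigroup verification via \cite[Proposition 6.4]{BiceStarling2018}) than the paper's terser argument.
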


\begin{proof}
If $S\subseteq{}_Y\mathcal{B}^\circ_\mathsf{c}(G)$ is a bumpy semigroup then $G$ is locally compact and \'etale, by \autoref{Bumpy=>LCetale}.  As $\{\mathrm{dom}(a):a\in S\}$ is a basis of compact open bisections, it follows that $G$ is ample.

Conversely, if $G$ is ample then $\mathcal{B}^\circ_\mathsf{c}(G)$ is a basic inverse semigroup which we can identify with the corresponding characteristic functions
\[S={}_{\{1\}}\mathcal{B}^\circ_\mathsf{c}(G)=\{\{1\}\times O:O\in\mathcal{B}^\circ_\mathsf{c}(G)\}.\]
Moreover, $S$ is a bumpy semigroup: \ref{1Proper} holds because each $O\in\mathcal{B}^\circ_\mathsf{c}(G)$ is compact; \ref{Urysohn} holds because $\mathcal{B}^\circ_\mathsf{c}(G)$ is a basis; and \ref{Involutive} holds because $O^{-1}\in\mathcal{B}^\circ_\mathsf{c}(G)$ whenever $O\in\mathcal{B}^\circ_\mathsf{c}(G)$.
\end{proof}

These bumpy semigroups on ample groupoids correspond to \cite{Exel2010}.  We could have also considered functions with values in a (discrete) group, which corresponds to \cite{Steinberg2019} (see the comments after \autoref{GroupoidIsomorphism3}).
In each of the above examples, we could actually replace $g$ in \autoref{BumpyDef} with a compact subset.  Accordingly, we strengthen bumpy semigroups as follows.

\begin{dfn}
We call bumpy $S\subseteq{}_Y\mathcal{B}^\circ(G)$ \emph{compact-bumpy} if
\begin{itemize}
\myitem[(Compact-Urysohn)]\label{CompactUrysohn} For any $C\in\mathcal{B}_\mathsf{c}(G)$ and $O\in\mathcal{B}^\circ(G)$ with $C\subseteq O$, there exists $a\in S$ such that
\[\mathrm{dom}(a)\subseteq O\qquad\text{and}\qquad C\subseteq[Y^\times]a.\]
\myitem[(Compact-Involutive)]\label{CompactInvolutive} If $a\in S$ and $C\subseteq[Y^\times]a$ is compact then there exists $b\in S$ such that, for all $g\in C$,
\[a(g)^{-1}=b(g^{-1}).\]
\end{itemize}
\end{dfn}

\begin{prp}\label{remarks compact bumpy} Assume $S\subseteq{}_Y\mathcal{B}^\circ(G)$ and $G$ is locally compact and \'etale.  Then
\begin{align}
\label{remarks compact bumpy1}\textnormal{\ref{CompactInvolutive}}\qquad&\Rightarrow\qquad\textnormal{\ref{Involutive}}.\\
\label{remarks compact bumpy2}\textnormal{\ref{CompactUrysohn}}\qquad&\Rightarrow\qquad\textnormal{\ref{Urysohn}}.
\end{align}
Moreover, if \textnormal{\ref{CompactUrysohn}} holds then it extends to arbitrary $O\in\mathcal{O}(G)$.
 \end{prp}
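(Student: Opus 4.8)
The plan is to prove the three assertions in turn. The implications \eqref{remarks compact bumpy1} and \eqref{remarks compact bumpy2} are routine once one exploits that the open bisections form a basis (as $G$ is \'etale) and that each open bisection, being homeomorphic via $\mathsf{r}$ to an open subset of the locally compact Hausdorff space $G^0$, is itself locally compact Hausdorff; in particular every point of an open bisection has a compact bisection neighbourhood. The real content lies in the \emph{moreover} clause, which I would reduce to a purely topological lemma and then establish by a section-gluing argument.

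For \eqref{remarks compact bumpy1} I would start from $a\in S$ and $g\in\mathrm{int}([Y^\times]a)$, pick an open bisection $M$ with $g\in M\subseteq\mathrm{int}([Y^\times]a)$, and then a compact bisection $C$ with $g\in\mathrm{int}(C)\subseteq C\subseteq M\subseteq[Y^\times]a$. Applying \ref{CompactInvolutive} to $C$ yields $b\in S$ with $a(h)^{-1}=b(h^{-1})$ for all $h\in C$; substituting $h\mapsto h^{-1}$ and inverting gives $a(h^{-1})=b(h)^{-1}$ for all $h\in C^{-1}$, so $C^{-1}\subseteq\{h:a(h^{-1})=b(h)^{-1}\}$. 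Since inversion is a homeomorphism of the \'etale groupoid $G$, it carries the open set $\mathrm{int}(C)$ into $\mathrm{int}(C^{-1})$, whence $g^{-1}\in\mathrm{int}(C)^{-1}\subseteq\mathrm{int}(C^{-1})\subseteq\mathrm{int}(\{h:a(h^{-1})=b(h)^{-1}\})$, which is \ref{Involutive}. For \eqref{remarks compact bumpy2}, first note $\emptyset\in S$ by applying \ref{CompactUrysohn} with $C=O=\emptyset$ (which forces $\mathrm{dom}(a)=\emptyset$). Given $g\in O\in\mathcal{O}(G)$, shrink $O$ to an open bisection $M$ with $g\in M\subseteq O$ and choose a compact bisection $C$ with $g\in\mathrm{int}(C)\subseteq C\subseteq M$; then \ref{CompactUrysohn} applied to $C\subseteq M$ gives $a\in S$ with $\mathrm{dom}(a)\subseteq M\subseteq O$ and $g\in\mathrm{int}(C)\subseteq\mathrm{int}([Y^\times]a)$, which is \ref{Urysohn}.

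For the \emph{moreover} clause, I would isolate the lemma: \emph{every compact bisection $C$ contained in an open set $O$ is contained in an open bisection $N$ with $N\subseteq O$.} Granting this, \ref{CompactUrysohn} applied to $C\subseteq N$ produces $a\in S$ with $\mathrm{dom}(a)\subseteq N\subseteq O$ and $C\subseteq[Y^\times]a$, which is exactly the desired extension; conversely, since $\mathrm{dom}(a)$ is always an open bisection, the extension is equivalent to the lemma. To prove the lemma I would first observe that $\mathsf{r}|_C$ is a continuous bijection of the compact $C$ onto $\mathsf{r}(C)\subseteq G^0$, hence a homeomorphism with continuous inverse $c:=(\mathsf{r}|_C)^{-1}$, so $R:=\mathsf{r}(C)$ is compact Hausdorff. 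Cover $C$ by finitely many open bisections $U_1,\dots,U_n\subseteq O$ with range-sections $\sigma_i:=(\mathsf{r}|_{U_i})^{-1}$, and shrink to a compact cover $C=\bigcup_iF_i$ with $F_i\subseteq U_i$ (shrinking lemma for the normal space $C$), setting $R_i:=\mathsf{r}(F_i)$. The structural facts I would use are: $\sigma_i$ and $\sigma_j$ agree exactly on the \emph{open} set $\mathsf{r}(U_i\cap U_j)$ (as $\mathsf{r}$ is an open map); $R_i\cap R_j\subseteq\mathsf{r}(U_i\cap U_j)$ and $\sigma_i|_{R_i}=c|_{R_i}$; and a continuous range-section over an open subset of $G^0$ has open bisection image in the \'etale groupoid $G$. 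I would then glue the $\sigma_i$ by induction on $n$: at each step, if $E$ is the open agreement set of the partial glue $\psi'$ with $\sigma_n$, the sets $R'\setminus E$ and $R_n\setminus E$ are disjoint compact sets in the Hausdorff space $G^0$ and so can be separated by disjoint opens $P,Q$; splicing $\psi'$ over $P\cup E$ with $\sigma_n$ over $Q\cup E$ keeps overlaps inside $E$, yielding a continuous section $\psi$ on an open $V\supseteq R$ with $\psi|_R=c$ and $\psi(V)\subseteq O$. Then $N:=\psi(V)$ is the required open bisection, with $C=\psi(R)\subseteq N\subseteq O$.

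The main obstacle is precisely this gluing. One cannot use the semigroup structure to combine bumps on the covering bisections, since products of functions supported on $U_1,\dots,U_n$ land off $O$; and the crude device of trimming $\bigcup_iU_i$ down to a bisection deletes points of $C$ wherever two of the $U_i$ carry distinct elements over a common range. One is therefore forced to patch the local range-sections, and the patching must be arranged so that every overlap falls inside an agreement set -- which is exactly where the Hausdorffness of $G^0$, and hence the normality of the compact base $R$, becomes indispensable.
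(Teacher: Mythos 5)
Your proofs of \eqref{remarks compact bumpy1} and \eqref{remarks compact bumpy2} are correct and essentially identical to the paper's: pick a compact (bisection) neighbourhood inside an open bisection, apply \ref{CompactInvolutive} resp.\ \ref{CompactUrysohn}, and use continuity of inversion for the first implication. Where you diverge is the \emph{moreover} clause: the paper disposes of it in two lines by citing \cite[Proposition 6.3]{BiceStarling2018}, which is precisely the extension lemma you isolate (a compact bisection is contained in an open bisection $N$; one then applies \ref{CompactUrysohn} to $C\subseteq N\cap O$), whereas you re-prove that lemma from scratch by gluing local range-sections. Most of your gluing argument is sound: the agreement sets are indeed open, $R_i\cap R_j\subseteq\mathsf{r}(U_i\cap U_j)$ because $C$ is a bisection, and the inductive splicing through disjoint opens $P,Q$ does produce a continuous section $\psi$ on an open $V\supseteq R$ with $\psi|_R=c$ and $\psi(V)\subseteq O$.

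However, your final step rests on a false ``structural fact'': the image of a continuous range-section over an open subset of $G^0$ is open and meets each $\mathsf{r}$-fibre at most once, but it need \emph{not} be a bisection, since nothing forces $\mathsf{s}$ to be injective on it, and the paper's definition of bisection requires both. Concretely, let $G=(\mathbb{Z}/2)\ltimes\mathbb{R}$ with the flip action $x\mapsto-x$, let $C=\{(x,e):x\in[-1,-\tfrac12]\}\cup\{(x,t):x\in[\tfrac14,\tfrac25]\}$ (a compact bisection), and cover it by the open bisections $U_1=\{(x,e):x\in(-1.1,-0.4)\}$ and $U_2=\{(x,t):x\in(0.2,0.45)\}$. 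Their ranges are disjoint, so your induction may legitimately output $P=\mathsf{r}[U_1]$, $Q=\mathsf{r}[U_2]$ and $\psi=\sigma_1\cup\sigma_2$ on $V=\mathsf{r}[U_1]\cup\mathsf{r}[U_2]$; but then $\psi(V)=U_1\cup U_2$ contains $(-0.42,e)$ and $(0.42,t)$, two distinct elements with the same source, so $N:=\psi(V)$ is not a bisection. The repair is standard but must be included: $\mathsf{s}\circ\psi\colon V\to G^0$ is a local homeomorphism (the section $\psi$ is a homeomorphism onto its open image, and $\mathsf{s}$ is a local homeomorphism), and it is injective on the compact set $R$ exactly because $C=\psi(R)$ is a bisection; since $G^0$ is locally compact Hausdorff, a local homeomorphism injective on a compact set is injective on some open $V'$ with $R\subseteq V'\subseteq V$, and then $N:=\psi(V')$ is the required open bisection with $C\subseteq N\subseteq O$. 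With that one extra shrinking step your argument closes the gap and in effect re-proves the result the paper imports from \cite{BiceStarling2018}.
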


\begin{proof}\
\begin{itemize}
\item[\eqref{remarks compact bumpy1}] Fix $a\in S$ and $g\in\mathrm{int}([Y^\times]a)$. Then there exists compact $C$ such that $g\in\mathrm{int}( C)\subseteq C\subseteq \mathrm{int}([Y^\times]a)$. Since $C\subseteq \mathrm{dom}(a)$, it is a compact bisection. By \ref{CompactInvolutive}, there exists $b\in S$ such that for all $h\in C$ we have $a(h)^{-1}=b(h^{-1})$.  As $G$ is \'etale, the inverse is continuous and hence $g^{-1}\in\mathrm{int}(C)^{-1}=\mathrm{int}(C^{-1})\subseteq\mathrm{int}(\{h\in G: a(h^{-1})=b(h)^{-1}\})$.

\item[\eqref{remarks compact bumpy2}] We apply \ref{CompactUrysohn} to the empty subset of $G$ to get the empty function in $S$. Next, fix an open subset $O\in \mathcal{O}(G)$ with $g\in O$. Since $G$ is \'etale, there exists an open bisection $N$ such that $g\in N\subseteq O$. Since $G$ is locally compact, there exists a compact neighbourhood $C$ of $g$ such that $g\in \mathrm{int}(C)\subseteq C\subseteq N$. Notice that $C$ is a compact bisection because $N$ is a bisection. Apply \ref{CompactUrysohn} to $C\subseteq N$ to get $a\in S$ such that $\mathrm{dom}(a)\subseteq N\subseteq O$ and  $C\subseteq [Y^\times]a$ and hence $g\in \mathrm{int}(C)\subseteq\mathrm{int}( [Y^\times]a)$.
\end{itemize}

Finally, fix $C\in\mathcal{B}_\mathsf{c}(G)$ and $O\in\mathcal{O}(G)$ such that $C\subseteq O$.  Since $G^0$ is Hausdorff and the source map is a local homeomorphism, $G$ is locally Hausdorff and  the ``bi-Hausdorff bisections'' in  \cite[Proposition 6.3]{BiceStarling2018}  are just bisections. Thus \cite[Proposition 6.3]{BiceStarling2018} applies and gives an open bisection $N$  such that  $C\subseteq N$. Now we apply  \ref{CompactUrysohn} to $C\subseteq N\cap O$ to get $a\in S$ such that $\mathrm{dom}(a)\subseteq N\cap O\subseteq O$ and $C\subseteq [Y^\times]a$.
\end{proof}


\section{Domination}\label{Domination}

The \emph{diagonal} of any $E\subseteq{}_Y\mathcal{P}(G)$ consists of those $d\in E$ with domains in $G^0$, i.e.
\[\mathsf{D}(E)=E\cap{}_Y\mathcal{P}(G^0)=\{d\in E:\mathrm{dom}(d)\subseteq G^0\}.\]
Our goal in next section will be to recover $G$ from a bumpy semigroup $S\subseteq{}_Y\mathcal{B}^\circ(G)$ together with its diagonal subsemigroup $D=\mathsf{D}(S)$.  To do this, we will consider ultrafilters with respect to the \emph{domination} relation $\prec$.  This relation is a purely multiplicative analogue of the one in \cite[\S 1.2.2]{Bice2019Weyl}.

\begin{dfn}\label{DominationDef}
Given a semigroup $S$ and $D\subseteq S$, we define relations
\begin{align*}
a\prec_sb\qquad&\Leftrightarrow\qquad asb=a=bsa\quad\text{and}\quad as,sa\in D;\\
a\prec b\qquad&\Leftrightarrow\qquad\exists s\in S\ (a\prec_sb).
\end{align*}
\end{dfn}

As long as $D$ is a subsemigroup of $S$, $\prec$ will be transitive.  We can also switch the subscript and the right argument by suitably modifying the left argument.

\begin{prp}\label{DominationProperties}
If $D$ is a subsemigroup of $S$ then, for all $a,b',b,c',c\in S$,
\begin{align}
\tag{Transitive}\label{Transitive}a\prec_{b'}b\prec_{c'}c\qquad&\Rightarrow\qquad a\prec_{c'}c.\\
\tag{Switch}\label{Switch}a\prec_bc\qquad&\Rightarrow\qquad bab\prec_{c}b.
\end{align}
If $0\in D$ is an absorbing element of $S$ then $0\prec_{a}b$, for all $a,b\in S$.
\end{prp}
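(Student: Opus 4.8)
The plan is to prove each of the three assertions by directly unfolding \autoref{DominationDef} and treating the defining equations $asb=a=bsa$ as rewriting rules, invoking the subsemigroup property of $D$ only where a membership condition $as,sa\in D$ needs to be checked. None of the three parts requires anything beyond associativity, so the work is entirely a matter of choosing the right substitution at each step.

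For \eqref{Transitive}, I assume $a\prec_{b'}b$ and $b\prec_{c'}c$, which unpack as $ab'b=a=bb'a$ with $ab',b'a\in D$, together with $bc'c=b=cc'b$ with $bc',c'b\in D$. To conclude $a\prec_{c'}c$ I need the two equations $ac'c=a=cc'a$ and the two memberships $ac',c'a\in D$. The key computations are single substitutions: since $a=ab'b$ and $b=bc'c$, I get $ac'c=(ab'b)c'c=ab'(bc'c)=ab'b=a$, and symmetrically $cc'a=cc'(bb'a)=(cc'b)b'a=bb'a=a$. For the memberships I again substitute $a=ab'b$ and $a=bb'a$ and then use that $D$ is a subsemigroup: $ac'=ab'bc'=(ab')(bc')\in D$ since $ab',bc'\in D$, and likewise $c'a=c'bb'a=(c'b)(b'a)\in D$. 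This is the only place where the subsemigroup hypothesis is actually used.

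For \eqref{Switch}, I assume $a\prec_bc$, i.e. $abc=a=cba$ with $ab,ba\in D$, and must show $bab\prec_cb$, i.e. $(bab)cb=bab=bc(bab)$ with $(bab)c,c(bab)\in D$. Here the manipulation is pure regrouping: $(bab)cb=b(abc)b=bab$ and $bc(bab)=b(cba)b=bab$, using $abc=a$ and $cba=a$ respectively. The memberships collapse the triple products outright, $(bab)c=b(abc)=ba\in D$ and $c(bab)=(cba)b=ab\in D$, so they follow from $ba,ab\in D$ directly. The final claim is immediate: if $0\in D$ is absorbing then $0ab=0=ba0$ and $0a=0=a0\in D$, giving $0\prec_ab$.

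I expect no genuine obstacle; the proposition is a bookkeeping exercise in associativity. The only mild subtlety is deciding, at each stage, which of the two defining equations (for instance $a=ab'b$ versus $a=bb'a$) to apply, so that the subscript element lines up on the correct side and the product telescopes — substituting on the wrong side leaves an expression that does not simplify. Keeping the left/right symmetry of $\prec_s$ in view makes the right choice transparent in every case.
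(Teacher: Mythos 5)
Your proof is correct and follows essentially the same route as the paper's: the same substitutions $ac'=(ab')(bc')$, $c'a=(c'b)(b'a)$ for the memberships in \eqref{Transitive}, the same telescoping computations for the defining equations, and the identical regroupings $(bab)c=ba$, $c(bab)=ab$ for \eqref{Switch}. No gaps; nothing further is needed.
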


\begin{proof}\

\noindent \eqref{Transitive}  If $a\prec_{b'}b\prec_{c'}c$ then $ac'=ab'bc'\in DD\subseteq D$.
Also $c'a = c'bb'a \in DD \subseteq D$.  Then
\[ac'c=ab'bc'c=ab'b=a=bb'a=cc'bb'a=cc'a.\]

\noindent \eqref{Switch} If $a\prec_bc$ then $babc=ba\in D$,  $cbab=ab \in D$ and $babcb=bab=bcbab$.

Lastly, if $0\in D$ is an absorbing element of $S$ then, for all $a,b\in S$, we see that  $0a=a0=0\in D$ and $ba0=0=0ab$, i.e. $0\prec_ab$.
\end{proof}

Domination on semigroups of functions on $G$ can be characterised as follows.
\begin{lm}
Given a semigroup $S\subseteq{}_Y\mathcal{B}(G)$ with subsemigroup $D=\mathsf{D}(S)$,
\begin{equation}\label{eq:domprop}
a\prec_{b'}b\quad\Leftrightarrow\quad\mathrm{dom}(a)\subseteq\mathrm{dom}(b')^{-1}(G^0\cap[1]b'b)\cap(G^0\cap[1]bb')\mathrm{dom}(b')^{-1}.
\end{equation}
\end{lm}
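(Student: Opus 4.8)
The plan is to rewrite each algebraic condition comprising $a\prec_{b'}b$ as a statement about the domains $A:=\mathrm{dom}(a)$, $B:=\mathrm{dom}(b)$ and $B':=\mathrm{dom}(b')$ (all bisections) and the values of $a,b,b'$, exploiting throughout that $\mathsf{r}$ and $\mathsf{s}$ are injective on each bisection. The one groupoid fact I would isolate first is that, for composable $g\in B'$ and $h\in B$, we have $gh\in G^0$ if and only if $h=g^{-1}$; this is what controls both the diagonal products $ab'$, $b'a$ and the unit-parts of the preimages $[1](b'b)$ and $[1](bb')$.

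First I would compute the right-hand side of \eqref{eq:domprop} outright. Using the fact above, $G^0\cap[1](b'b)=\{\mathsf{r}(g):g\in B'\cap B^{-1},\ b'(g)b(g^{-1})=1\}$; left-multiplying by $\mathrm{dom}(b')^{-1}=B'^{-1}$ and invoking injectivity of $\mathsf{r}$ on $B'$ collapses each product to a single element, giving (after reindexing $f=g^{-1}$) the set $\{f\in B\cap B'^{-1}:b'(f^{-1})b(f)=1\}$. Symmetrically, $(G^0\cap[1](bb'))B'^{-1}=\{g\in B\cap B'^{-1}:b(g)b'(g^{-1})=1\}$. Since both sets sit inside $B\cap B'^{-1}$, their intersection is $\{g\in B\cap B'^{-1}:b'(g^{-1})b(g)=1=b(g)b'(g^{-1})\}$, so the right-hand side of \eqref{eq:domprop} says precisely that $A$ is contained in this set.

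It then remains to match this with $a\prec_{b'}b$. For the implication from the containment, $A\subseteq B\cap B'^{-1}$ forces every composable factorisation to be the trivial $g=g\,g^{-1}g$, so $\mathrm{dom}(ab')=AB'=\mathsf{r}[A]\subseteq G^0$ and $\mathrm{dom}(b'a)=B'A=\mathsf{s}[A]\subseteq G^0$ (hence $ab',b'a\in D=\mathsf{D}(S)$), while $AB'B=A=BB'A$; the value identities then give $ab'b(g)=a(g)b'(g^{-1})b(g)=a(g)$ and $bb'a(g)=b(g)b'(g^{-1})a(g)=a(g)$, so $ab'b=a=bb'a$. Conversely, the main point is to recover the containment: from $ab'\in D$ I get $AB'\subseteq G^0$, whence $AB'=\mathsf{r}[A_0]$ for $A_0:=\{g\in A:\mathsf{s}(g)\in\mathsf{r}[B']\}\subseteq B'^{-1}$, and therefore $\mathrm{dom}(ab'b)=\mathsf{r}[A_0]B=\{k\in B:\mathsf{r}(k)\in\mathsf{r}[A_0]\}$; equating this with $A$ (from $ab'b=a$) yields $A\subseteq B$ and, using injectivity of $\mathsf{r}$ on $A$, also $A=A_0\subseteq B'^{-1}$. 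With $A\subseteq B\cap B'^{-1}$ established, $ab'b=a$ reads $a(g)\,[b'(g^{-1})b(g)]=a(g)$ and $bb'a=a$ reads $[b(g)b'(g^{-1})]\,a(g)=a(g)$, so the $1$-cancellativity of $Y$ from \autoref{Gass} extracts exactly $b'(g^{-1})b(g)=1=b(g)b'(g^{-1})$.

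The main obstacle is this last extraction in the forward direction: converting the purely multiplicative domain equation $AB'B=A$, together with the diagonal condition $AB'\subseteq G^0$, into the genuine set containment $A\subseteq B\cap B'^{-1}$. The delicate step is the identity $A=A_0$, where one must argue that every $g\in A$ already has a composable partner in $B'$ and track which composable pairs actually contribute; this is exactly where injectivity of $\mathsf{r}$ and $\mathsf{s}$ on bisections is indispensable. The remaining steps are direct computations with the product \eqref{FunctionProduct}.
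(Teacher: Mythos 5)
Your proof is correct and takes essentially the same route as the paper's: both directions ultimately rest on unique factorization through bisections (so every relevant product collapses to $g=g\,g^{-1}g$), the fact that a composable product $gh$ lies in $G^0$ exactly when $h=g^{-1}$, and the $1$-cancellativity of $Y$. Your organizational choices \textendash\, precomputing the right-hand side as the pointwise condition $\mathrm{dom}(a)\subseteq\{g\in\mathrm{dom}(b)\cap\mathrm{dom}(b')^{-1}:b'(g^{-1})b(g)=1=b(g)b'(g^{-1})\}$, and extracting $\mathrm{dom}(a)\subseteq\mathrm{dom}(b)\cap\mathrm{dom}(b')^{-1}$ from domain equations before reading off values \textendash\, are sound but do not constitute a different method.
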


\begin{proof}
Say $a\prec_{b'}b$ and $g\in\mathrm{dom}(a)$, so $a(g)=ab'b(g)=a(h)b'(i)b(j)$, for some $h,i,j\in G$ with $g=hij$.  As $\mathrm{dom}(a)$ is a bisection and $\mathsf{r}(g)=\mathsf{r}(h)$, we must have $h=g$ so  $a(g)=a(h)b'b(ij)=a(g)b'b(ij)$ and hence $b'b(ij)=1$, by \eqref{1Cancellative}.  As $ab'\in D$, we must also have $hi\in G^0$ so $i=h^{-1}=g^{-1}$ and $g=hij=j\in\mathrm{dom}(b)$.  Thus $g=gg^{-1}g=hij\in\mathrm{dom}(b')^{-1}(G^0\cap[1]b'b)$.  A symmetric argument yields $g\in(G^0\cap[1]bb')\mathrm{dom}(b')^{-1}$, proving the $\Rightarrow$ part of \eqref{eq:domprop}.

Conversely, if $g\in\mathrm{dom}(a)\subseteq\mathrm{dom}(b')^{-1}(G^0\cap[1]b'b)$ then we must have $g=hij$ for some $h\in\mathrm{dom}(b')^{-1}$, $i\in\mathrm{dom}(b')$ and $j\in\mathrm{dom}(b)$ with $b'b(ij)=1$ and $ij\in G^0$.  This means $i=j^{-1}$ and $h^{-1}h=ii^{-1}$ so $h=i^{-1}=j$, as $\mathrm{dom}(b')$ is a bisection, and hence $g=hij=jj^{-1}j=j$.  Thus $a(g)=a(g)b'b(g^{-1}g)=ab'b(g)$ and, in particular, $g\in\mathrm{dom}(ab'b)$.  Moreover, if $i'\in\mathrm{dom}(b')$ and $gi'$ is defined then $ii^{-1}=g^{-1}g=i'i'^{-1}$ and hence $i'=i=g^{-1}$, as $\mathrm{dom}(b')$ is a bisection.  If $j'\in\mathrm{dom}(b)$ and $hi'j'$ is defined too (and hence in $\mathrm{dom}(ab'b)$) then again $j'=j=g$, as $\mathrm{dom}(b)$ is again a bisection.  This shows that $\mathrm{dom}(ab')\subseteq\mathrm{dom}(a)\mathrm{dom}(a)^{-1}\subseteq G^0$ so $ab'\in D$, as well as $\mathrm{dom}(a)=\mathrm{dom}(ab'b)$ so $a=ab'b$.  A symmetric argument yields $b'a\in D$ and $a=bb'a$ and hence $a\prec_{b'}b$.
\end{proof}
 
In compact-bumpy semigroups, it follows that domination corresponds to compact containment.  Specifically, for any $V,W\subseteq G$, define the relation $\Subset$ by
\[V\Subset W\qquad\Leftrightarrow\qquad\exists\text{ compact }C\subseteq G\ (V\subseteq C\subseteq W).\]

\begin{prp}
If $S\subseteq{}_Y\mathcal{B}^\circ(G)$ is a compact-bumpy semigroup and $D=\mathsf{D}(S)$,
\begin{equation}\label{precSubset}
a\prec b\qquad\Leftrightarrow\qquad\mathrm{dom}(a)\Subset[Y^\times]b.
\end{equation}
\end{prp}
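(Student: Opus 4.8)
The plan is to run both implications through the domain characterisation \eqref{eq:domprop} of $\prec_{b'}$, feeding in the compact-bumpy axioms \ref{1Proper} and \ref{CompactInvolutive} together with the $1$-cancellativity \eqref{1Cancellative} of $Y$.

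For the forward implication, suppose $a\prec_{b'}b$. I would first read off from \eqref{eq:domprop} (more precisely, from the argument proving it) that every $g\in\mathrm{dom}(a)$ satisfies $g\in\mathrm{dom}(b)$, $g^{-1}\in\mathrm{dom}(b')$ and $b'(g^{-1})b(g)=1=b(g)b'(g^{-1})$; in particular $b(g)\in Y^\times$, so already $\mathrm{dom}(a)\subseteq[Y^\times]b$ and $\mathsf{r}[\mathrm{dom}(a)]\subseteq[1]bb'$. The compactness is then manufactured from \ref{1Proper}: the set $K:=[1]bb'$ is a compact subset of $G^0$, and since any unit of $\mathrm{dom}(bb')=\mathrm{dom}(b)\mathrm{dom}(b')$ factors through $\mathrm{dom}(b)$ one has $K\subseteq\mathsf{r}[\mathrm{dom}(b)]$. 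Because $\mathsf{r}$ restricts to a homeomorphism of the open bisection $\mathrm{dom}(b)$ onto $\mathsf{r}[\mathrm{dom}(b)]$, the set $C:=\{g\in\mathrm{dom}(b):\mathsf{r}(g)\in K\}$ is compact and visibly contains $\mathrm{dom}(a)$. It remains to check $C\subseteq[Y^\times]b$: for $g\in C$, decomposing $\mathsf{r}(g)\in\mathrm{dom}(b)\mathrm{dom}(b')$ forces the factorisation $\mathsf{r}(g)=g\,g^{-1}$ with $g^{-1}\in\mathrm{dom}(b')$, whence $b(g)b'(g^{-1})=1$, and $b(g)\in Y^\times$ follows because in a $1$-cancellative monoid a one-sided inverse is two-sided (the idempotent $b'(g^{-1})b(g)$ equals $1$ by \eqref{1Cancellative}). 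This yields $\mathrm{dom}(a)\subseteq C\subseteq[Y^\times]b$, i.e. $\mathrm{dom}(a)\Subset[Y^\times]b$.

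For the reverse implication, suppose $\mathrm{dom}(a)\subseteq C\subseteq[Y^\times]b$ with $C$ compact. As $C$ is a subset of the bisection $\mathrm{dom}(b)$, it is a compact bisection, so \ref{CompactInvolutive} (applied to $b$ and $C$) supplies $b'\in S$ with $b'(g^{-1})=b(g)^{-1}$ for all $g\in C$. I would then verify $a\prec_{b'}b$ directly via \eqref{eq:domprop}: for $g\in\mathrm{dom}(a)\subseteq C$ we have $g^{-1}\in\mathrm{dom}(b')$ and, using $b(g)\in Y^\times$, $b'b(\mathsf{s}(g))=b'(g^{-1})b(g)=1=b(g)b'(g^{-1})=bb'(\mathsf{r}(g))$; this exhibits $g=g\,\mathsf{s}(g)\in\mathrm{dom}(b')^{-1}(G^0\cap[1]b'b)$ and $g=\mathsf{r}(g)\,g\in(G^0\cap[1]bb')\mathrm{dom}(b')^{-1}$, which is exactly the containment required by \eqref{eq:domprop}.

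The step I expect to be the main obstacle is the compactness argument in the forward direction. The naive candidate $\overline{\mathrm{dom}(a)}$ need not lie inside $[Y^\times]b$, so the sandwiching compact set must instead be built from the genuinely compact set $[1]bb'$ furnished by \ref{1Proper}; making this work hinges on the two structural observations that $[1]bb'\subseteq\mathsf{r}[\mathrm{dom}(b)]$ (so that its pullback along the range homeomorphism remains compact) and that one-sided invertibility promotes to genuine membership in $Y^\times$ via \eqref{1Cancellative}.
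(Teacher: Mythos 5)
Your reverse implication is correct and is essentially the paper's own argument (the paper verifies $ab'b=a=bb'a$ and $ab',b'a\in D$ directly, while you route back through \eqref{eq:domprop}, but the content is the same). The genuine gap is in the forward direction, at precisely the step you flagged as the hinge: the claim that $K=[1]bb'$ is a compact subset of $G^0$, hence contained in $\mathsf{r}[\mathrm{dom}(b)]$. Nothing forces $[1]bb'\subseteq G^0$: the function $bb'$ is defined on all of $\mathrm{dom}(b)\mathrm{dom}(b')$, and the relation $a\prec_{b'}b$ only constrains $ab'$, $b'a$, $ab'b$ and $bb'a$, saying nothing about $bb'$ away from $\mathrm{dom}(a)$. (Extreme case: if $Y=\{1\}$ then $[1]bb'=\mathrm{dom}(b)\mathrm{dom}(b')$, which is hardly ever inside $G^0$.) What does survive of your factorisation argument is only $[1]bb'\cap G^0\subseteq\mathsf{r}[\mathrm{dom}(b)]$, so your set $C$ is the pullback along $\mathsf{r}|_{\mathrm{dom}(b)}$ of $[1]bb'\cap G^0$. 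Since \ref{1Proper} makes $[1]bb'$ compact but $G^0$ is merely open \textendash\, it need not be closed in $G$, because the proposition allows non-Hausdorff $G$ (only $G^0$ is assumed Hausdorff) \textendash\, this intersection can fail to be compact, and then so can $C$.

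Concretely, let $G=X\sqcup\{\gamma\}$ where $X$ is the Cantor set, $x_0\in X$, and $\gamma$ is one extra arrow with $\mathsf{s}(\gamma)=\mathsf{r}(\gamma)=x_0=\gamma^2$, with basic neighbourhoods of $\gamma$ given by $(U\setminus\{x_0\})\cup\{\gamma\}$ for open $U\ni x_0$; this is the standard non-Hausdorff ample groupoid, and $B=(X\setminus\{x_0\})\cup\{\gamma\}$ is a compact open bisection. Take $Y=\{1\}$ and $S={}_{\{1\}}\mathcal{B}^\circ_\mathsf{c}(G)$ as in \autoref{Bumpy=>LCample}, and let $a,b',b\in S$ be the constant functions with $\mathrm{dom}(a)=A$ (any non-empty clopen $A\subseteq X\setminus\{x_0\}$), $\mathrm{dom}(b')=B$ and $\mathrm{dom}(b)=X$. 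One checks $AB=BA=A$, so $ab'=b'a$ has domain $A\subseteq G^0$ and $ab'b=a=bb'a$, i.e. $a\prec_{b'}b$. But $K=[1]bb'=XB=B\not\subseteq G^0$, and your set is $C=\{g\in X:\mathsf{r}(g)\in B\}=X\setminus\{x_0\}$, which is not compact \textendash\, even though \eqref{precSubset} of course holds here, witnessed by $A$ itself. The paper's proof avoids this trap by never intersecting with $G^0$: it multiplies the compact sets $[1]b'b$ and $[1]bb'$ by the open bisection $\mathrm{dom}(b')^{-1}$, which stays compact by \autoref{CO}, and composability together with the bisection property collapses these products into the Hausdorff subspace $\mathrm{dom}(b)$ (in the example, $([1]bb')\mathrm{dom}(b')^{-1}=BB^{-1}=X$, compact); the intersection of the two products is then the required compact set sandwiched between $\mathrm{dom}(a)$ and $[Y^\times]b$. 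Your argument would be salvageable if $G$ were assumed Hausdorff, since then $G^0$ is closed and $[1]bb'\cap G^0$ is compact, but that is a strictly weaker statement than the one being proved.
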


\begin{proof}
Say $a\prec_{b'}b$.  Note $[1]bb'$ and $[1]b'b$ are compact, by \ref{1Proper}, and hence so are $\mathrm{dom}(b')^{-1}([1]b'b)$ and $([1]bb')\mathrm{dom}(b')^{-1}$ by \autoref{CO}.  As these compact sets are contained in the Hausdorff subspace $\mathrm{dom}(b)$ (as $G^0$ is Hausdorff and $G$ is \'etale \textendash\, see \autoref{Bumpy=>LCetale}), their intersection is also compact.  This intersection contains $\mathrm{dom}(a)$, by \eqref{eq:domprop}, and is certainly contained in $[Y^\times]b$ so $\mathrm{dom}(a)\Subset[Y^\times]b$.

Conversely, if $\mathrm{dom}(a)\Subset[Y^\times]b$ then \ref{CompactInvolutive} yields $b'\in S$ such that $b(g)^{-1}=b'(g^{-1})$, for every $g\in\mathrm{dom}(a)$.  Thus
\[\mathrm{dom}(ab')=\mathrm{dom}(a)\mathrm{dom}(a)^{-1}\subseteq G^0 \quad \text {and } \quad \mathrm{dom}(b'a) = \mathrm{dom}(a)^{-1}\mathrm{dom}(a) \subseteq G^0,\]
that is $ab',b'a\in D$.   Now
\[ab'b(g)=a(g)b'b(g^{-1}g)=a(g)=bb'(gg^{-1})a(g)=bb'a(g),\]
which gives $ab'b=a=bb'a$ and hence $a\prec_{b'}b$.
\end{proof}

Note that if all elements of $Y$ are invertible, e.g. if $Y=\mathbb{F}\setminus\{0\}$ for a field $\mathbb{F}$ like $\mathbb{R}$ or $\mathbb{C}$, then \eqref{precSubset} reduces to compact containment of domains, i.e.
\[a\prec b\qquad\Leftrightarrow\qquad\mathrm{dom}(a)\Subset\mathrm{dom}(b).\]
In particular, if the domains of elements in $S$ are already compact (e.g. if $G$ is ample and $S={}_{\{1\}}\mathcal{B}^\circ_\mathsf{c}(G)$ like in \autoref{Bumpy=>LCample}) then \eqref{precSubset} becomes
\[a\prec b\qquad\Leftrightarrow\qquad\mathrm{dom}(a)\subseteq\mathrm{dom}(b).\]


\section{Groupoid Reconstruction}\label{GroupoidReconstruction}
In this section we prove our main theorem where we reconstruct a groupoid $G$ from a bumpy semigroup $S \subseteq  {}_Y\mathcal{B}^\circ(G)$ using ultrafilters.
\begin{dfn}
Given a semigroup $S$ and $D\subseteq S$, we call $F\subseteq S$ a \emph{filter} if
\[\tag{Filter}a,b\in F\qquad\Leftrightarrow\qquad\exists c\in F\ (a,b\succ c).\]
The maximal proper filters are called \emph{ultrafilters} and are denoted by $\mathcal{U}(S)$.
\end{dfn}

Equivalently, a filter is a down-directed up-set, i.e.
\begin{align}
\tag{Up-Set}F\supseteq F^\prec=\{a\in S:\exists f\in F\ (f\prec a)\}.&\\
\tag{Down-Directed}a,b\in F\qquad\Rightarrow\qquad\exists c\in F\ (a,b\succ c).&
\end{align}
If $0\in D$ is an absorbing element of $S$ then $0$ is a $\prec$-minimum (see \autoref{DominationProperties}).  This means a filter $F$ is proper if and only if $0\notin F$, so ultrafilters are maximal filters avoiding $0$.  We consider the topology on $\mathcal{U}(S)$ generated by the basis $(\mathcal{U}_a)_{a\in S}$ where
\[\mathcal{U}_a=\{U\in\mathcal{U}(S):a\in U\}.\]
On subsets $T\subseteq S$, we also have a $^*$ operation given by
\[T^*=\{s\in S:\exists t\in T\ \exists r\in S\ (t\prec_{s}r)\}.\]

\begin{rmk}
A somewhat less general version of the next result can be found in \cite[Proposition 1.2]{Bice2019Weyl}.  The key difference is that we are not using any involution on $S$ \textendash\, the involution on $G$ is recovered from the product on $S$ via this $^*$ operation.
\end{rmk}

\begin{thm}\label{GroupoidRecovery}
If $S\subseteq{}_Y\mathcal{B}^\circ(G)$ is a bumpy semigroup with diagonal $D=\mathsf{D}(S)$ then
\[g\mapsto S_g=\{a\in S:g\in\mathrm{int}([Y^\times]a)\}\]
is a homeomorphism from $G$ onto $\mathcal{U}(S)$.  Moreover, for all $(g,h)\in G^2$,
\[S_{g^{-1}}=S_g^*\qquad\text{and}\qquad S_{gh}=(S_gS_h)^\prec.\]
Thus $\mathcal{U}(S)$ is an \'etale locally compact groupoid.
\end{thm}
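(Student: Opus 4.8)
The plan is to prove that $\phi\colon g\mapsto S_g$ is a bijection onto $\mathcal{U}(S)$ by establishing three facts and then deducing maximality from them: (i) each $S_g$ is a proper filter; (ii) $\phi$ reflects inclusion, i.e. $S_g\subseteq S_h$ forces $g=h$; and (iii) every proper filter is contained in some $S_g$. Granting these, if $S_g\subseteq F$ with $F$ a proper filter then by (iii) $F\subseteq S_h$, so $S_g\subseteq S_h$, whence $g=h$ and $F=S_g$; thus each $S_g$ is an ultrafilter. Conversely any $U\in\mathcal{U}(S)$ lies in some $S_g$ by (iii), and maximality of $U$ with properness of $S_g$ gives $U=S_g$, so $\phi$ is onto $\mathcal{U}(S)$, while (ii) gives injectivity. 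The homeomorphism and the two displayed identities are handled separately, and the final groupoid assertion follows by transporting the structure of $G$ (locally compact and \'etale by \autoref{Bumpy=>LCetale}) across the bijection.

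The recurring observation, from \eqref{eq:domprop} and \eqref{1Cancellative}, is that $f\prec b$ implies $\mathrm{dom}(f)\subseteq[Y^\times]b$, since for $g_0\in\mathrm{dom}(f)$ the witness forces $b(g_0)$ to be invertible; as $\mathrm{dom}(f)$ is open this upgrades to $\mathrm{dom}(f)\subseteq\mathrm{int}([Y^\times]b)$, so $S_g$ is an up-set. For down-directedness, given $a,b\in S_g$ I apply \ref{Involutive} to $a$ and to $b$ at $g$ to get a neighbourhood of $g$ on which both pointwise inverse relations hold, intersect it with $\mathrm{int}([Y^\times]a)\cap\mathrm{int}([Y^\times]b)$, and use \ref{Urysohn} to produce $c\in S_g$ with $\mathrm{dom}(c)$ inside this neighbourhood; the maps supplied by \ref{Involutive} then serve as $\prec$-witnesses verifying $c\prec a$ and $c\prec b$ through \eqref{eq:domprop}. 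Properness holds since $\emptyset\in S$ but $\emptyset\notin S_g$, and (ii) is immediate from \ref{Urysohn}: if $g\neq h$ pick an open bisection $O\ni g$ with $h\notin O$ (open bisections are Hausdorff because $G^0$ is) and take $a\in S_g$ with $\mathrm{dom}(a)\subseteq O$, so $a\notin S_h$.

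The crux is (iii): an arbitrary proper filter $F$ must lie inside some $S_g$, i.e. $\bigcap_{a\in F}\mathrm{int}([Y^\times]a)\neq\emptyset$. Since $G$ need only be locally Hausdorff, I cannot intersect in $G$ directly, so I push everything into the Hausdorff unit space via the source map, working with $V_a=\mathsf{s}[\mathrm{int}([Y^\times]a)]\subseteq G^0$. Down-directedness makes $\{V_a:a\in F\}$ a base of nonempty open sets (each is nonempty because a lower bound $d\prec a$ in $F$ has $\emptyset\neq\mathrm{dom}(d)\subseteq\mathrm{int}([Y^\times]a)$), and $c\prec a$ gives $V_c\subseteq V_a$. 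Compactness is the key input: whenever $c\prec_{a'}a$, \eqref{eq:domprop} places $\mathrm{dom}(c)$ inside $\mathrm{dom}(a')^{-1}([1]a'a)$, which is compact by \ref{1Proper} and \autoref{CO}; hence $\overline{V_c}$ is compact, and in fact $\overline{V_{c_1}}\subseteq V_a$ whenever $c_1\prec c\prec a$ in $F$. As such lower bounds are cofinal in $F$ and $G^0$ is Hausdorff, these compacta are closed and the finite intersection property yields a common point $u\in\bigcap_{a\in F}V_a$. Pulling $u$ back through the source homeomorphism on a fixed bisection $\mathrm{dom}(a_0)$ produces $g$ with $\mathsf{s}(g)=u$, and a short argument using common lower bounds and the injectivity of $\mathsf{s}$ on bisections shows $g\in\mathrm{int}([Y^\times]a)$ for every $a\in F$, i.e. $F\subseteq S_g$. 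I expect this compactness-plus-pushforward step to be where the real work lies.

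Finally, $\phi^{-1}(\mathcal{U}_a)=\mathrm{int}([Y^\times]a)$ and $\phi(\mathrm{int}([Y^\times]a))=\mathcal{U}_a$; since the sets $\mathrm{int}([Y^\times]a)$ form a basis for $G$ (by \ref{Urysohn}) and the $\mathcal{U}_a$ a basis for $\mathcal{U}(S)$, $\phi$ is a homeomorphism. For $S_{g^{-1}}=S_g^*$ I read the membership conditions off \eqref{eq:domprop} in both directions, using \ref{Involutive} and \ref{Urysohn} to build, from $s\in S_{g^{-1}}$, a pair $t\in S_g$ and $r\in S$ with $t\prec_s r$. For $S_{gh}=(S_gS_h)^\prec$, the inclusion $\supseteq$ holds because $a\in S_g$ and $b\in S_h$ give $ab\in S_{gh}$ (invertibility is closed under products and multiplication is open), combined with the up-set property; the inclusion $\subseteq$ again uses \ref{Involutive} at $gh$ together with \ref{Urysohn} to manufacture $a\in S_g$, $b\in S_h$ with $ab\prec e$ for any given $e\in S_{gh}$. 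These identities say exactly that $\phi$ intertwines inversion and multiplication on $G$ with the operations $U\mapsto U^*$ and $(U,V)\mapsto(UV)^\prec$ on $\mathcal{U}(S)$, so $\mathcal{U}(S)$ acquires a topological groupoid structure isomorphic to $G$ and is therefore \'etale and locally compact.
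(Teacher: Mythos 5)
Your proposal is correct, and it reorganises the paper's argument in a way worth noting. The paper proves maximality of each $S_g$ directly: given a proper filter $F\supsetneqq S_g$, it uses \ref{1Proper} and \ref{Urysohn} to build $e\in S_g$ whose domain misses a compact set containing $\mathrm{dom}(d)$ for some $d\in F$, so that $\emptyset$ is the only common lower bound of $d$ and $e$ and $F$ is forced to be improper. You avoid this argument entirely: your step (iii) \textendash\, every proper filter embeds in some $S_g$ \textendash\, combined with the inclusion-reflection (ii) yields maximality formally. (The paper effectively proves (iii) as well, but states it only for ultrafilters; its compactness argument never uses maximality, which is exactly the observation your decomposition exploits.) Your compactness step is also staged differently: the paper intersects the compacta $\overline{\mathrm{dom}(u)}\cap\mathrm{dom}(a)$ inside the Hausdorff bisection $\mathrm{dom}(a)$ for a fixed $a$ in the filter, whereas you push forward along $\mathsf{s}$ into $G^0$ and pull back at the end via injectivity of $\mathsf{s}$ on bisections and common lower bounds; both versions rest on the same ingredients, namely \eqref{eq:domprop}, \ref{1Proper}, \autoref{CO} and Hausdorffness of the ambient space. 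Finally, for $S_{g^{-1}}=S_g^*$ and $S_{gh}=(S_gS_h)^\prec$ you manufacture the $\prec$-witnesses directly from \ref{Involutive}, \ref{Urysohn} and continuity/openness of multiplication, mirroring the down-directedness proof; the paper instead stays inside the filter calculus, using \eqref{Switch} for the first identity and the computation $cdb'\prec_{a'}a$ for the second, so your route is more uniform while the paper's shows the identities need almost no further topology once the filters are in hand. One small imprecision: your justification of (ii), that ``open bisections are Hausdorff because $G^0$ is'', is not the relevant reason; what is true (and what the paper also asserts without proof) is that if $\mathsf{s}(g)\neq\mathsf{s}(h)$ one can shrink a bisection around $g$ using Hausdorffness of $G^0$, while if $\mathsf{s}(g)=\mathsf{s}(h)$ and $g\neq h$ then no bisection containing $g$ can contain $h$.
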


\begin{proof}
Fix $g\in G$.  We start by showing $S_g$ is a filter.  Take $a,b\in S_g$.  By \ref{Involutive}, we have $a',b'\in S$ with
\[g\in O= \mathrm{int}(\{h\in G:a(h)^{-1}=a'(h^{-1})\}) \cap \mathrm{int}(\{h\in G:b(h)^{-1}=b'(h^{-1})\}).\]
By \ref{Urysohn}, we have $c\in S_g$ with $\mathrm{dom}(c)\subseteq O$.  We claim that $c\prec_{a'}a$ and $c\prec_{b'}b$.  To see this, let $h \in \mathrm{dom}(c)$ and compute
\[ca'a(h) = ca'a(hh^{-1}h) = c(h)a'(h^{-1})a(h) = c(h)a(h)^{-1}a(h) = c(h).\]
Thus $ca'a = c$ and similarly $aa'c = c$.   Since $\mathrm{dom}(ca'), \mathrm{dom}(a'c) \subseteq G^0$,
we have $a'c, ca' \in D$ and hence $c\prec_{a'}a$.  That $c\prec_{b'}b$ is similar.

To see that $S_g$ is an up-set fix $a \in S_g$ and $a\prec_{b'}b$.  By \eqref{eq:domprop},
 \[g\in\mathrm{int}([Y^\times]a)\subseteq \mathrm{dom}(a)\subseteq\mathrm{dom}(b')^{-1}[1]b'b\cap[1]bb'\mathrm{dom}(b')^{-1}\subseteq[Y^\times]b.\]
As $\mathrm{dom}(a)$ is open, $g\in\mathrm{int}([Y^\times]b)$ so $b\in S_g$ and hence $S_g$ is a filter.

For maximality, say we had a filter $F$ with $S_g\subsetneqq F$.  Take $a\in S_g$ and $b\in F\setminus S_g$, so $g\in\mathrm{int}([Y^\times]a)\subseteq\mathrm{dom}(a)$ and $g\notin\mathrm{int}([Y^\times]b)$.  As $F$ is a filter, we have $c,c',d\in S$ with $F\ni d\prec_{c'}c\prec a,b$.  Taking $C=[1]cc'\mathrm{dom}(c')^{-1}$ or $C=\mathrm{dom}(c')^{-1}[1]c'c$, note
\[\mathrm{dom}(d)\subseteq C\subseteq\mathrm{dom}(c)\subseteq\mathrm{int}([Y^\times]b)\not\ni g.\]
By \ref{1Proper}, $C$ is compact and hence closed in $\mathrm{dom}(a)$ so $g\in\mathrm{dom}(a)\setminus C\in\mathcal{O}(G)$.  By \ref{Urysohn}, we have $e\in S_g$ with $\mathrm{dom}(e)\subseteq\mathrm{dom}(a)\setminus C$.  Then the only element of $S$ below both $d$ and $e$ is the empty function, as $\mathrm{dom}(d)\cap\mathrm{dom}(e)\subseteq C\cap\mathrm{dom}(e)=\emptyset$.  Thus $\emptyset\in F$, as $F$ is a filter, so $F$ is not proper and $S_g$ must be an ultrafilter.

On the other hand, say we have an ultrafilter $U\subseteq S$ and fix $a\in U$.  Whenever $u\prec_{a'}a$, we can use \eqref{eq:domprop}  to get
\[\mathrm{dom}(u) \subseteq [1]aa'\mathrm{dom}(a')^{-1} \subseteq \mathrm{dom}(a). \]
Since $\mathrm{dom}(a)$ is Hausdorff and $[1]aa'\mathrm{dom}(a')^{-1}$  is compact, it is closed in $\mathrm{dom}(a)$ and we get
 \[\overline{\mathrm{dom}(u)} \cap \mathrm{dom}(a) \subseteq [1]aa'\mathrm{dom}(a')^{-1}\]
and both are compact.
We claim that
\begin{equation}\label{eq:opencompact}\bigcap_{u\in U}\mathrm{int}([Y^\times]u)=\bigcap_{\substack{u\in U\\ u\prec a}}\overline{\mathrm{dom}(u)} \cap \mathrm{dom}(a).\end{equation}
The forward containment is clear.  To see the reverse, fix $g$ in the right-hand side and fix $u \in U$.
Using an argument similar to the one above, there exists $b,c',c \in S$ with $U \ni b \prec_{c'} c \prec u, a$.
Taking $C$ to be the compact set  $[1]cc'\mathrm{dom}(c')^{-1}$ we get
\[ \mathrm{dom}(b) \subseteq C \subseteq \mathrm{dom}(c) \subseteq \mathrm{dom}(a) \cap \mathrm{int}([Y^\times]u).\]
Since $C$ is compact in the Hausdorff space $\mathrm{dom}(a)$, $\overline{\mathrm{dom}(b)} \cap \mathrm{dom}(a) \subseteq C$.
Now
\[g\in\overline{\mathrm{dom}(b)} \cap \mathrm{dom}(a) \subseteq  \mathrm{int}([Y^\times]u),\]
proving the claim.

Furthermore, the intersection in \eqref{eq:opencompact} is nonempty as any directed intersection of non-empty compact subsets of a Hausdorff space is again non-empty.
  For any $g\in\bigcap_{u\in U}\mathrm{int}([Y^\times]u)$, we have $U\subseteq S_g$ and hence $U=S_g$, by maximality.  So $g\mapsto S_g$ is surjective.
 For injectivity, if $g \neq h$ then there exists $O \in \mathcal{B}^\circ(G)$ such that $g \in O$ and $h \notin O$ and
 \ref{Urysohn} implies $S_g \neq S_h$.  Thus we have a bijection from $G$ onto $\mathcal{U}(S)$.  As
\[S_g\in\mathcal{U}_a\qquad\Leftrightarrow\qquad a\in S_g\qquad\Leftrightarrow\qquad g\in\mathrm{int}([Y^\times]a)\]
and $(\mathrm{int}([Y^\times]a))_{a\in S}$ is a basis for $G$, by \ref{Urysohn}, $g\mapsto S_g$ is a homeomorphism.

We certainly have $S_g^*\subseteq S_{g^{-1}}$.  Conversely, take $a\in S_{g^{-1}}$.  We already showed that $S_{g^{-1}}$ is a filter, so we have $b\in S_{g^{-1}}$ with $b\prec_{a'}a$ and hence $a'ba'\prec_aa'$, by \eqref{Switch}.  As $g^{-1}\in\mathrm{int}([Y^\times]b)\subseteq\mathrm{dom}(b)\subseteq\mathrm{int}([Y^\times]a')^{-1}$,
\[g=gg^{-1}g\in\mathrm{int}([Y^\times]a')\mathrm{int}([Y^\times]b)\mathrm{int}([Y^\times]a')\subseteq\mathrm{int}([Y^\times]a'ba')\]
so $a'ba'\in S_g$ and hence $a\in S_g^*$, showing that $S_g^*=S_{g^{-1}}$.

Likewise, whenever $(g,h)\in G^2$, we immediately see that $(S_gS_h)^\prec\subseteq S_{gh}$.  Conversely, take $a\in S_{gh}$ and then further take $a',b,b',c,d\in S$ with $S_{gh}\ni c\prec_{a'}a$ and $S_{h^{-1}}\ni d\prec_{b'}b$.  We claim that $cdb'\prec_{a'}a$.  Indeed, $a'cdb'\in DD\subseteq D$ and if $cdb'a'(ijk)=c(i)db'(j)a'(k)$ is defined then $j\in G_0$, as $db'\in D$, and hence $ca'(ik)=c(i)a'(k)$ is defined so $ijk=ik\in G^0$, as $ca'\in D$, showing that $cdb'a'\in D$ too.  Also, $aa'cdb'=cdb'$ and if $cdb'a'a(ijk)=c(i)db'(j)a'a(k)$ is defined then $j\in G^0$, as $db'\in D$, so $ca'a(ik)=c(i)a'a(k)$ is defined and hence $a'a(k)=1$, as $c\prec_{a'}a$, showing that $cdb'a'a=cdb'$.  This proves the claim and hence $a\in(S_gS_h)^\prec$, as $cdb'=S_{gh}S_{h^{-1}}S_h\subseteq S_gS_h$, showing that $(S_gS_h)^\prec=S_{gh}$.

Finally, since $S$ is a bumpy semigroup, $G$ is \'etale and locally compact by \autoref{Bumpy=>LCetale}.  Thus $\mathcal{U}(S)$ is an \'etale and locally compact groupoid as well.
\end{proof}

The upshot of \autoref{GroupoidRecovery} is that we can reconstruct $G$ from any bumpy semigroup $S\subseteq{}_Y\mathcal{B}^\circ(G)$ together with its diagonal subsemigroup $D=\mathsf{D}(S)$.  This can be rephrased without reference to ultrafilters or the precise method of reconstruction.

If $E\subseteq{}_Y\mathcal{P}(G)$ and $E'\subseteq{}_{Y'}\mathcal{P}(G')$ then we call $\phi:E\mapsto E'$ \emph{diagonal-preserving} if $\phi[\mathsf{D}(E)]=\mathsf{D}(E')$.  If there exists a diagonal-preserving isomorphism from $E$ onto $E'$ then we call $E$ and $E'$ \emph{diagonally isomorphic}.

\begin{cor}\label{GroupoidIsomorphism}
Assume $S\subseteq{}_Y\mathcal{B}^\circ(G)$ and $S'\subseteq{}_{Y'}\mathcal{B}^\circ(G')$ are bumpy semigroups, where $G'$ and $Y'$ also satisfy \autoref{Gass}.  If $\phi:S\rightarrow S'$ is a diagonal-preserving isomorphism then the map $\widetilde{\phi}:G\rightarrow G'$ defined by
\[\{\widetilde{\phi}(g)\}=\bigcap_{a\in S_g}\mathrm{dom}(\phi(a))\]
is an \'etale groupoid isomorphism $(=$ homeomorphism + groupoid isomorphism$)$.
\end{cor}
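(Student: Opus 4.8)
The plan is to factor $\widetilde\phi$ through the ultrafilter groupoids supplied by \autoref{GroupoidRecovery}. Applying that theorem to $S$, and (since $G'$ and $Y'$ also satisfy \autoref{Gass}) to $S'$, we obtain \'etale groupoid isomorphisms $\iota\colon g\mapsto S_g$ from $G$ onto $\mathcal U(S)$ and $\iota'\colon g'\mapsto S'_{g'}$ from $G'$ onto $\mathcal U(S')$, where on each ultrafilter groupoid the inverse is $U\mapsto U^*$ and the product is $(U,V)\mapsto(UV)^\prec$. It therefore suffices to build an \'etale groupoid isomorphism $\Phi\colon\mathcal U(S)\to\mathcal U(S')$ and then verify that the composite $\psi:=\iota'^{-1}\circ\Phi\circ\iota$ coincides with $\widetilde\phi$.

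For $\Phi$ I would take $U\mapsto\phi[U]$. The crucial observation is that $\prec_s$, $\prec$ and the $^*$ operation are defined solely from the semigroup product and the distinguished subsemigroup $D$; since $\phi$ is an isomorphism with $\phi[\mathsf D(S)]=\mathsf D(S')$, it preserves each of them in both directions (e.g.\ $t\prec_sr\Leftrightarrow\phi(t)\prec_{\phi(s)}\phi(r)$, as $ts,st\in D\Leftrightarrow\phi(t)\phi(s),\phi(s)\phi(t)\in D'$). Being an order isomorphism for $\prec$ preserving the absorbing element $\emptyset$, $\phi$ sends proper filters to proper filters and ultrafilters to ultrafilters, so $\Phi$ is a bijection with inverse $V\mapsto\phi^{-1}[V]$. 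From $\Phi(\mathcal U_a)=\mathcal U_{\phi(a)}$ it is a homeomorphism, and from $\phi[UV]=\phi[U]\phi[V]$ together with preservation of $\prec$ and $^*$ we get $\Phi(U^*)=\Phi(U)^*$ and $\Phi((UV)^\prec)=(\Phi(U)\Phi(V))^\prec$; thus $\Phi$ respects the recovered operations. This is all routine bookkeeping.

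The substantive step is reconciling $\psi$ with the formula defining $\widetilde\phi$. By construction $S'_{\psi(g)}=\Phi(S_g)=\phi[S_g]$, and the surjectivity part of \autoref{GroupoidRecovery} identifies $\psi(g)$ as the unique point of $\bigcap_{a\in S_g}\mathrm{int}([Y'^\times]\phi(a))$, whereas the statement uses $\bigcap_{a\in S_g}\mathrm{dom}(\phi(a))$. One inclusion is immediate since $\mathrm{int}([Y'^\times]\phi(a))\subseteq\mathrm{dom}(\phi(a))$. For the reverse I would exploit down-directedness of $S_g$: given $a\in S_g$, choose $u\in S_g$ with $u\prec a$, so $\phi(u)\prec\phi(a)$ and, by \eqref{eq:domprop}, the open set $\mathrm{dom}(\phi(u))$ lies inside $[Y'^\times]\phi(a)$, hence inside $\mathrm{int}([Y'^\times]\phi(a))$; since any point of $\bigcap_{a\in S_g}\mathrm{dom}(\phi(a))$ in particular lies in $\mathrm{dom}(\phi(u))$, it lies in $\mathrm{int}([Y'^\times]\phi(a))$ for every $a\in S_g$. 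Hence the two intersections coincide, the displayed intersection is the singleton $\{\psi(g)\}$, and $\widetilde\phi=\psi$ is an \'etale groupoid isomorphism. I expect this reconciliation to be the main obstacle, since $\widetilde\phi$ is phrased through domains while \autoref{GroupoidRecovery} produces points through the open sets $\mathrm{int}([Y'^\times]\cdot)$, forcing one to re-invoke the open-versus-compact argument behind \eqref{eq:opencompact}.
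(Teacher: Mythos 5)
Your proposal is correct and takes essentially the same route as the paper's proof: transport ultrafilters via $U\mapsto\phi[U]$ (using that $\prec_s$ and hence filters, $^*$ and $(UV)^\prec$ are defined purely from the product and the diagonal) and compose with the isomorphisms from \autoref{GroupoidRecovery}. The only difference is that you explicitly verify $\bigcap_{a\in S_g}\mathrm{dom}(\phi(a))=\bigcap_{a\in S_g}\mathrm{int}([Y'^\times]\phi(a))$ via down-directedness and \eqref{eq:domprop}, a detail the paper leaves implicit when it simply describes the inverse of $g'\mapsto S'_{g'}$ as $U'\mapsto\bigcap_{a'\in U'}\mathrm{dom}(a')$.
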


\begin{proof}
Since $S$ and $S'$ are bumpy semigroups, $G$ and $G'$ are locally compact and \'etale by  \autoref{Bumpy=>LCetale}.
 Let $a,b, b'\in S$. Then $a\prec_{b'}b$ if and only if $\phi(a)\prec_{\phi(b')}\phi(b)$. It follows that  $U\mapsto\phi[U]$ is a homeomorphism from $\mathcal{U}(S)$ onto $\mathcal{U}(S')$ with $\phi[U^*]=\phi[U]^*$ and $\phi[(UV)^\prec]=(\phi[U]\phi[V])^\prec$.  Now composing with the isomorphism $g\mapsto S_g$ from $G$ onto $\mathcal{U}(S)$ given in \autoref{GroupoidRecovery}, together with the corresponding inverse $U'\mapsto\bigcap_{a'\in U'}\mathrm{dom}(a')$ from $\mathcal{U}(S')$ to $G'$, yields the required isomorphism $\widetilde{\phi}$.
\end{proof}

In particular, we can consider \autoref{GroupoidRecovery} and \autoref{GroupoidIsomorphism} when $Y=\{1\}$, as in \autoref{Bumpy=>LCample}, which corresponds to the reconstruction in \cite[Theorem 4.8]{Exel2010}.  However, our method differs somewhat \textendash\, our reconstruction of $G$ via ultrafilters in $S$ is more in line with \cite{LawsonLenz2013}, while \cite{Exel2010} instead uses ultrafilters in $D=\mathsf{D}(S)$ to recover $G^0$ first and then uses germs to reconstruct the rest of $G$.


\section{Semigroup Reconstruction}

With the exception of \cite{Exel2010}, most groupoid reconstruction theorems start with functions that are defined (or supported \textendash\, see \autoref{Y^G}) on arbitrary open subsets, not just open bisections.  To recover the groupoid in this case, we must first obtain a subsemigroup with bisection domains.  To do this, we examine various subsets that can be defined algebraically from the diagonal.

\begin{dfn}
Given $E\subseteq A$ and any product defined on $(A\times E)\cup(E\times A)$,
\begin{align}
\tag{Normalisers}\mathsf{N}(E)&=\{a\in A:aE=Ea\}.\\
\tag{Commutant}\mathsf{C}(E)&=\{a\in A:ad=da,\text{ for all }d\in E\}.\\
\tag{Center}\mathsf{Z}(E)&=\{a\in E:ad=da,\text{ for all }d\in E\}=E\cap\mathsf{C}(E).
\end{align}
\end{dfn}

Our intermediate goal will be to show how these subsets can be characterised by the groupoid structure when $A$ is a semigroup of functions on $G$.

First let us denote the \emph{isotropy} and \emph{isosections} of $G$ by
\begin{align}
\tag{Isotropy}G^\mathrm{iso}&=\{g\in G:\mathsf{s}(g)=\mathsf{r}(g)\}.\\
\tag{Isosections}\mathcal{I}(G)&=\{I\subseteq G:II^{-1}\cup I^{-1}I\subseteq G^\mathrm{iso})\}.
\end{align}
Equivalently, $I\subseteq G$ is an isosection iff, for all $g,h\in I$,
\[\mathsf{s}(g)=\mathsf{s}(h)\qquad\Leftrightarrow\qquad\mathsf{r}(g)=\mathsf{r}(h).\]

\begin{ass}\label{Aass}
We are given a semigroup $A\subseteq{}_Y\mathcal{O}(G)$ on which the product is given by \eqref{FunctionProduct} whenever $\mathrm{dom}(a)$ or $\mathrm{dom}(b)$ is a bisection.  From $A$ we define
\[\begin{aligned}
Z&=A\cap{}_{\mathsf{Z}(Y)}\mathcal{P}(G^0)&&=\{a\in A:\mathrm{dom}(a)\subseteq G^0\text{ and }\mathrm{ran}(a)\subseteq\mathsf{Z}(Y)\}.\\
S&=A\cap{}_Y\mathcal{B}(G)&&=\{a\in A:\mathrm{dom}(a)\in\mathcal{B}(G)\}.\\
D&= A\cap{}_Y\mathcal{P}(G^0)&&=\{a\in A:\mathrm{dom}(a)\subseteq G^0\}= \mathsf{D}(S).\\
C&=A\cap{}_Y\mathcal{P}(G^\mathrm{iso})&&=\{a\in A:\mathrm{dom}(a)\subseteq G^\mathrm{iso}\}.\\
N&=A\cap{}_Y\mathcal{I}(G)&&=\{a\in A:\mathrm{dom}(a)\in\mathcal{I}(G)\}.
\end{aligned}\]
\end{ass}

\begin{rmk}
Apart from associativity, we are not making any assumptions on the product on $A\setminus S$, although the example to keep in mind would be the convolution product when $Y=R\setminus\{0\}$ for some domain $R$.  The reason we have such freedom is that we will only be examining the normalisers/commutant/centre of subsets of $S$, for which the product of pairs outside $S$ is of no relevance.
\end{rmk}

Note $Z=D$ when $Y$ is commutative.  In general, $Z$ is still the centre of $D$ when $D$ is \emph{exhaustive} in that, for all $g\in G^0$ and $y\in Y$, we have $d\in D$ with $d(g)=y$.

\begin{prp}\label{ZE}
If $D$ is exhaustive then $Z=\mathsf{Z}(D)$.
\end{prp}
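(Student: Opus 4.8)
The plan is to prove the two inclusions of $Z=\mathsf{Z}(D)$ separately, both resting on the observation that on $D$ the product \eqref{FunctionProduct} collapses to pointwise multiplication. Indeed, for $a,d\in D$ the domains lie in $G^0$, and for units $g,h\in G^0$ the product $gh$ is defined only when $\mathsf{s}(g)=\mathsf{r}(h)$, i.e. $g=h$, in which case $gh=g$. Hence $\mathrm{dom}(ad)=\mathrm{dom}(a)\cap\mathrm{dom}(d)=\mathrm{dom}(da)$ and $ad(g)=a(g)d(g)$ for every $g$ in this common domain. In particular $ad$ and $da$ always share a domain, so the identity $ad=da$ is equivalent to $a(g)d(g)=d(g)a(g)$ holding at each $g\in\mathrm{dom}(a)\cap\mathrm{dom}(d)$.

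For the inclusion $Z\subseteq\mathsf{Z}(D)$ I would take $a\in Z$, so $\mathrm{dom}(a)\subseteq G^0$ (whence $a\in D$) and every value $a(g)$ lies in $\mathsf{Z}(Y)$. Given any $d\in D$ and any $g\in\mathrm{dom}(a)\cap\mathrm{dom}(d)$, centrality gives $a(g)d(g)=d(g)a(g)$, so the pointwise description forces $ad=da$; thus $a\in\mathsf{Z}(D)$. This direction uses no hypothesis on $D$.

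For the reverse inclusion $\mathsf{Z}(D)\subseteq Z$, which is where exhaustivity enters, I would take $a\in\mathsf{Z}(D)$ and show $\mathrm{ran}(a)\subseteq\mathsf{Z}(Y)$. Fix $g\in\mathrm{dom}(a)$ and an arbitrary $y\in Y$; exhaustivity supplies $d\in D$ with $d(g)=y$, so in particular $g\in\mathrm{dom}(d)$. Since $a$ commutes with $d$, the pointwise identity at $g$ reads $a(g)y=a(g)d(g)=d(g)a(g)=ya(g)$. As $y\in Y$ was arbitrary, $a(g)\in\mathsf{Z}(Y)$, and as $g\in\mathrm{dom}(a)$ was arbitrary, $\mathrm{ran}(a)\subseteq\mathsf{Z}(Y)$, giving $a\in Z$.

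The only real subtlety, and the step I would flag, is the opening reduction of the product to a pointwise operation on $D$, since it hinges on the groupoid fact that composable units coincide; everything afterward is an immediate consequence of the definition of $\mathsf{Z}(Y)$ together with the exhaustivity hypothesis, which is used precisely to realise each prescribed value $y$ at the single point $g$.
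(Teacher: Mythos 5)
Your proof is correct and takes essentially the same route as the paper's: the inclusion $Z\subseteq\mathsf{Z}(D)$ is immediate from pointwise commutation of central values, and exhaustivity supplies the witness $d$ with $d(g)=y$ for the reverse inclusion, which the paper phrases contrapositively ($\mathrm{ran}(e)\nsubseteq\mathsf{Z}(Y)$ yields $ed\neq de$). Your opening reduction of the product on $D$ to pointwise multiplication on $\mathrm{dom}(a)\cap\mathrm{dom}(d)$ is precisely the fact the paper uses implicitly when it passes from $e(g)d(g)\neq d(g)e(g)$ to $ed\neq de$.
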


\begin{proof}
Take any $e\in D$.  If $\mathrm{ran}(e)\subseteq\mathsf{Z}(Y)$ then certainly $e\in\mathsf{Z}(D)$.  Conversely, if $\mathrm{ran}(e)\nsubseteq\mathsf{Z}(Y)$ then we have $g\in G^0$ with $e(g)\notin\mathsf{Z}(Y)$, which means we have $y\in Y$ with $e(g)y\neq ye(g)$.  As $D$ is exhaustive, we have $d\in D$ with $d(g)=y$ so $e(g)d(g)\neq d(g)e(g)$ and hence $ed\neq de$, showing that $e\notin\mathsf{Z}(D)$.
\end{proof}

Recall that a collection $\mathcal{P}\subseteq\mathcal{P}(G)$ of subsets of $G$ is $T_0$ if the elements of $\mathcal{P}$ distinguish the elements of $G$, i.e.
\[\tag{$T_0$}g,h\in G\qquad\Rightarrow\qquad\exists P\in\mathcal{P}\ (|P\cap\{g,h\}|=1).\]
Next we show $C$ is the commutant of $Z$ when $\mathrm{dom}[Z]=\{\mathrm{dom}(z):z\in Z\}$ is $T_0$.

\begin{prp}
If $\mathrm{dom}[Z]$ is $T_0$ then $C=\mathsf{C}(Z)$.
\end{prp}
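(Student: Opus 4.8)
The plan is to compute the products $az$ and $za$ explicitly for $z\in Z$ and then read off both inclusions directly. First I would note that, since $\mathrm{dom}(z)\subseteq G^0$, the domain of $z$ is automatically a bisection (any two units sharing a source or a range coincide), so both products $az$ and $za$ are defined via \eqref{FunctionProduct}. Multiplying by a unit-supported function amounts to a restriction twisted by a central factor: unwinding \eqref{FunctionProduct}, using $g\,\mathsf{s}(g)=g=\mathsf{r}(g)\,g$, gives
\[\mathrm{dom}(az)=\{g\in\mathrm{dom}(a):\mathsf{s}(g)\in\mathrm{dom}(z)\},\qquad az(g)=a(g)z(\mathsf{s}(g)),\]
and
\[\mathrm{dom}(za)=\{g\in\mathrm{dom}(a):\mathsf{r}(g)\in\mathrm{dom}(z)\},\qquad za(g)=z(\mathsf{r}(g))a(g).\]
These two identities are essentially the only computational content, and getting the source/range bookkeeping right is where the (mild) care is needed.

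For the inclusion $C\subseteq\mathsf{C}(Z)$, I would take $a\in C$, so that $\mathsf{s}(g)=\mathsf{r}(g)$ for every $g\in\mathrm{dom}(a)$. Then the two domains above coincide, and on that common domain the value $z(\mathsf{s}(g))=z(\mathsf{r}(g))$ lies in $\mathsf{Z}(Y)$ and hence commutes with $a(g)$; thus $az(g)=a(g)z(\mathsf{s}(g))=z(\mathsf{r}(g))a(g)=za(g)$, giving $az=za$ for all $z\in Z$, i.e.\ $a\in\mathsf{C}(Z)$.

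For the reverse inclusion $\mathsf{C}(Z)\subseteq C$, I would argue contrapositively. If $a\notin C$ there is $g\in\mathrm{dom}(a)$ with $\mathsf{s}(g)\neq\mathsf{r}(g)$, and this is exactly where the $T_0$ hypothesis on $\mathrm{dom}[Z]$ enters: it furnishes $z\in Z$ whose domain contains precisely one of the two distinct units $\mathsf{s}(g)$ and $\mathsf{r}(g)$. By the domain formulas above, $g$ then lies in exactly one of $\mathrm{dom}(az)$ and $\mathrm{dom}(za)$, so $az\neq za$ and $a\notin\mathsf{C}(Z)$. The main (indeed the only real) obstacle is the first step, namely correctly evaluating the twisted restrictions $az$ and $za$ from \eqref{FunctionProduct}; once these are in hand, centrality of the $Z$-values settles one inclusion and the $T_0$ separation of the two units settles the other.
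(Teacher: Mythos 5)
Your proposal is correct and follows essentially the same route as the paper: the forward inclusion comes from the values of $z$ being central in $Y$ together with $\mathsf{s}(g)=\mathsf{r}(g)$ on $\mathrm{dom}(a)$, and the reverse inclusion uses the $T_0$ hypothesis to separate $\mathsf{s}(g)$ from $\mathsf{r}(g)$ so that $g$ lies in exactly one of $\mathrm{dom}(az)$, $\mathrm{dom}(za)$. The only difference is presentational: the paper treats the first inclusion as immediate, whereas you spell it out via the explicit formulas $az(g)=a(g)z(\mathsf{s}(g))$ and $za(g)=z(\mathsf{r}(g))a(g)$, which is a harmless (and arguably clarifying) elaboration.
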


\begin{proof}
Take any $a\in A$.  If $\mathrm{dom}(a)\subseteq G^\mathrm{iso}$ then certainly $a\in\mathsf{C}(Z)$.  Conversely, if $\mathrm{dom}(a)\nsubseteq G^\mathrm{iso}$, we have $g\in\mathrm{dom}(a)$ with $\mathsf{s}(g)\neq\mathsf{r}(g)$.  As $Z$ is $T_0$, we have $z\in Z$ such that $\mathsf{s}(g)\in\mathrm{dom}(z)\not\ni\mathsf{r}(g)$ and hence $g\in\mathrm{dom}(az)\setminus\mathrm{dom}(za)$ or vice versa.  Either way, $az\neq za$ so $a\notin\mathsf{C}(Z)$.
\end{proof}

Next we would like to show $N$ consists of the normalisers of $Z$.  Unfortunately this is not true in general \textendash\, see \autoref{StrictNormalisers} below.  However, we can show that the normalisers of $Z$ are sandwiched between $N$ and a large subset $M\subseteq N$ consisting of those $n\in N$ that are ``$C$-$Z$-dominated'' by elements of $S$.  More precisely, we let
\[M=\{n\in N:\exists s,t\in S\ (stn=n=nts,\ tn,nt\in C\text{ and }st,ts\in Z)\}.\]

\begin{thm}\label{MN}
If $\mathrm{dom}[Z]$ is $T_0$ then $M\subseteq\mathsf{N}(Z)\subseteq N$.
\end{thm}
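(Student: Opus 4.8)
The plan is to establish the two inclusions $M\subseteq\mathsf{N}(Z)$ and $\mathsf{N}(Z)\subseteq N$ separately, both built on the same preliminary computation. Since every $z\in Z$ has $\mathrm{dom}(z)\subseteq G^0$, its domain is a bisection, so \eqref{FunctionProduct} applies to $az$ and $za$ for any $a\in A$; unwinding it and using that units act as partial identities yields
\[
\mathrm{dom}(az)=\{g\in\mathrm{dom}(a):\mathsf{s}(g)\in\mathrm{dom}(z)\},\qquad az(g)=a(g)\,z(\mathsf{s}(g)),
\]
\[
\mathrm{dom}(za)=\{g\in\mathrm{dom}(a):\mathsf{r}(g)\in\mathrm{dom}(z)\},\qquad za(g)=z(\mathsf{r}(g))\,a(g).
\]
So left and right multiplication by $Z$ merely restrict $\mathrm{dom}(a)$ along $\mathsf{s}$ and $\mathsf{r}$ respectively and rescale by central values. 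I would record this first, as both inclusions reduce to tracking these domains.

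For $\mathsf{N}(Z)\subseteq N$ I would argue by contraposition. Suppose $\mathrm{dom}(a)\notin\mathcal{I}(G)$, so there are $g,h\in\mathrm{dom}(a)$ with $\mathsf{s}(g)=\mathsf{s}(h)$ but $\mathsf{r}(g)\neq\mathsf{r}(h)$ (the reversed case is symmetric). Since $\mathrm{dom}[Z]$ is $T_0$, I choose $z\in Z$ whose domain contains exactly one of the units $\mathsf{r}(g),\mathsf{r}(h)$, say $\mathsf{r}(g)\in\mathrm{dom}(z)\not\ni\mathsf{r}(h)$. By the formulas above, $g\in\mathrm{dom}(za)$ but $h\notin\mathrm{dom}(za)$. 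If $a$ were in $\mathsf{N}(Z)$ then $za\in Za=aZ$, so $za=az'$ for some $z'\in Z$, and comparing domains would force $\mathsf{s}(g)\in\mathrm{dom}(z')$ (as $g\in\mathrm{dom}(az')$) yet $\mathsf{s}(h)\notin\mathrm{dom}(z')$ (as $h\notin\mathrm{dom}(az')$), contradicting $\mathsf{s}(g)=\mathsf{s}(h)$. Hence $a\notin\mathsf{N}(Z)$, which is the desired contrapositive.

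For $M\subseteq\mathsf{N}(Z)$, fix $n\in M$ with witnesses $s,t\in S$ satisfying $stn=n=nts$, $tn,nt\in C$ and $st,ts\in Z$, and fix $z\in Z$; I will show $nz\in Zn$, the reverse inclusion $Zn\subseteq nZ$ following symmetrically from the conjugate $tzs$ and the relations $nts=n$, $nt\in C$. Writing $K=\mathrm{dom}(n)$ (an isosection), $P=\mathrm{dom}(s)$ and $Q=\mathrm{dom}(t)$, the first step is to decode the hypotheses: $st,ts\in Z$ give $PQ,QP\subseteq G^0$, so $Q$ is a partial inverse of the bisection $P$; the relation $stn=n$ (with $Y$ being $1$-cancellative) gives $\mathsf{r}[K]\subseteq PQ$ and $st=1$ on $\mathsf{r}[K]$, hence for each $g\in K$ a unique $p\in P$ with $\mathsf{r}(p)=\mathsf{r}(g)$ and $p^{-1}\in Q$; and $tn\in C$ then forces $\mathsf{s}(p)=\mathsf{s}(g)$, so $P$ runs ``parallel'' to the isosection $K$. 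With this in hand, I claim $z':=szt$ does the job: its domain lies in $PQ\subseteq G^0$ and its values $z(\mathsf{s}(p))\,st(\mathsf{r}(p))$ are central, so $z'\in Z$, while for $g\in K$ the decomposition $\mathsf{r}(g)=p\,\mathsf{s}(p)\,p^{-1}$ gives
\[
z'(\mathsf{r}(g))=s(p)\,z(\mathsf{s}(g))\,t(p^{-1})=z(\mathsf{s}(g))\,st(\mathsf{r}(g))=z(\mathsf{s}(g)),
\]
using centrality of $z(\mathsf{s}(g))$ and $st=1$ on $\mathsf{r}[K]$. Therefore $z'n(g)=z(\mathsf{s}(g))n(g)=n(g)z(\mathsf{s}(g))=nz(g)$, and the domains of $z'n$ and $nz$ coincide because $P$ is a bisection, giving $nz=z'n\in Zn$.

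The main obstacle is exactly this geometric decoding in the last paragraph: extracting from the purely algebraic relations defining $M$ the statement that $\mathrm{dom}(s)$ is a bisection parallel to the isosection $\mathrm{dom}(n)$, with $\mathrm{dom}(t)$ its partial inverse. Once that parallel correspondence is in place, the conjugation identity $nz=(szt)n$ is a routine evaluation; the one point demanding care is the equality of the domains of $szt\,n$ and $nz$, where I must invoke both the bisection property of $P$ (uniqueness of the element over each range unit) and the normalisations $st=1$, $ts=1$ on the relevant unit sets.
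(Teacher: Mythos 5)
Your proof is correct and takes essentially the same approach as the paper: the $T_0$ separation argument comparing domains of $za$ and $az'$ establishes $\mathsf{N}(Z)\subseteq N$, and the conjugation identity $nz=(szt)n$ with $szt\in Z$ (plus its dual) establishes $M\subseteq\mathsf{N}(Z)$, exactly as in the paper's proof. Your explicit ``decoding'' of the witnesses (the bisection $P$ running parallel to the isosection $\mathrm{dom}(n)$, with $Q$ a partial inverse of $P$) is simply a more detailed rendering of the computation the paper carries out inline, including the domain comparison the paper treats tersely.
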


\begin{proof}
Take $n\in\mathsf{N}(Z)$.  To see that $n \in N$, we show $\mathrm{dom}(n)$ is an isosection.  Suppose $g,h\in\mathrm{dom}(n)$ with $s=\mathsf{s}(g)=\mathsf{s}(h)$.  For all $z\in Z$,
\begin{align*}
s\in\mathrm{dom}(z)\qquad&\Rightarrow\qquad\{g,h\}\subseteq\mathrm{dom}(nz).\\
s\notin\mathrm{dom}(z)\qquad&\Rightarrow\qquad\{g,h\}\cap\mathrm{dom}(nz)=\emptyset.
\end{align*}
By way of contradiction suppose $\mathsf{r}(g)\neq\mathsf{r}(h)$.   Then, as $\mathrm{dom}[Z]$ is $T_0$, there exists $z\in Z$ with $\mathsf{r}(g)\in\mathrm{dom}(z)\not\ni\mathsf{r}(h)$ and hence $g\in\mathrm{dom}(zn)\not\ni h$ or vice versa.  Either way, $zn\notin nZ$, by the above implications, contradicting $n\in\mathsf{N}(Z)$.  Likewise, $\mathsf{r}(g)=\mathsf{r}(h)$ implies $\mathsf{s}(g)=\mathsf{s}(h)$ so $n\in N$ and $\mathsf{N}(Z)\subseteq N$.

On the other hand, take $n\in M$, so we have $s,t\in S$ with $stn=n=nts$, $tn,nt\in C$ and $st,ts\in Z$.  For any $z\in Z$, we claim that
\[nz=sztn.\]
To see this note that, whenever $g\in\mathrm{dom}(sztn)$ or $g\in\mathrm{dom}(nz)$, we have $h\in G$ with $\mathsf{s}(g)=\mathsf{s}(h)$, $\mathsf{r}(g)=\mathsf{r}(h)$ and
\begin{align*}sztn(g)&=s(h)z(g^{-1}g)t(h^{-1})n(g)=s(h)t(h^{-1})n(g)z(g^{-1}g)\\&=n(g)z(g^{-1}g)=nz(g),\end{align*}
as $\mathrm{dom}(tn)\subseteq G^\mathrm{iso}$, $\mathrm{dom}(z)\subseteq G^0$, $\mathrm{ran}(z)\subseteq\mathsf{Z}(Y)$ and $stn=n$ which proves the claim.

Next we show $szt \in Z$ by showing $\mathrm{ran}(szt) \subseteq \mathsf{Z}(Y)$.
As $st\in Z$, we see that, for any $e\in\mathrm{dom}(szt)$, we have $g\in\mathrm{dom}(s)$ with $e=gg^{-1}$ and
\[szt(e)=s(g)z(g^{-1}g)t(g^{-1})=s(g)t(g^{-1})z(g^{-1}g)=st(gg^{-1})z(g^{-1}g)\in\mathsf{Z}(Y).\]
Thus $szt\in Z$ and hence $nZ\subseteq Zn$.  By a dual argument, $Zn\subseteq nZ$ so $n\in\mathsf{N}(Z)$, showing that $M\subseteq\mathsf{N}(Z)$.
\end{proof}

Note that we always have
\[D\subseteq C\subseteq\mathsf{C}(Z)\subseteq\mathsf{N}(Z).\]
So while $\mathsf{N}(Z)$ may be smaller than $N$, it does at least have the same diagonal.

For the next result, we need to strengthen \ref{Involutive} to require $ab,ba\in Z$, i.e.
\begin{itemize}
\myitem[($Z$-Involutive)]\label{ZInvolutive}Whenever $a\in S$ and $g\in\mathrm{int}([Y^\times]a)$, there exists $b\in S$ such that $ab,ba\in Z$ and
\[g^{-1}\in\mathrm{int}(\{h\in G:a(h^{-1})=b(h)^{-1}\}).\]
\end{itemize}
Accordingly, we say $S$ is \emph{$Z$-bumpy} if $S$ is bumpy and satisfies \ref{ZInvolutive}.  Note that all the examples of bumpy semigroups in \autoref{BumpySemigroups} are in fact $Z$-bumpy.  In the main result of this section, i.e. \autoref{GroupoidIsomorphism2}, we need to be sure that $N$ is not too big so we require $N=S$ (in particular, this holds when $G$ is effective \textendash\, see the comments after \autoref{GroupoidIsomorphism2}).

\begin{prp}\label{MZbumpy}
If $S=N$ is $Z$-bumpy then so is $\mathsf{N}(Z)$.
\end{prp}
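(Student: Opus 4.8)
The plan is to verify the three defining conditions of a $Z$-bumpy semigroup directly for $\mathsf{N}(Z)$, using the containment $M\subseteq\mathsf{N}(Z)\subseteq N$ from \autoref{MN}. First I would record the structural preliminaries. Since $Z\subseteq D\subseteq C\subseteq\mathsf{C}(Z)\subseteq\mathsf{N}(Z)$, and since $\mathsf{N}(Z)\subseteq N=S\subseteq{}_Y\mathcal{B}^\circ(G)$ by \autoref{MN}, the set $\mathsf{N}(Z)$ is a subsemigroup of $S$: normalisers are closed under the product, as $a,b\in\mathsf{N}(Z)$ give $abZ=aZb=Zab$. It contains $\emptyset$ and $Z$, and one checks $\mathsf{N}(Z)\cap{}_{\mathsf{Z}(Y)}\mathcal{P}(G^0)=Z$, so $Z$-bumpiness of $\mathsf{N}(Z)$ is measured against the same $Z$. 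Condition \ref{1Proper} is then immediate, since every $a\in\mathsf{N}(Z)$ already lies in $S$ and so has $[1]a$ compact.

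The engine for the two remaining conditions is a normaliser criterion coming from centrality. For any $a\in S$ and $z\in Z$ the identity $a(h)\,z(\mathsf{s}(h))=z(\mathsf{s}(h))\,a(h)$ (valid because $z(\mathsf{s}(h))\in\mathsf{Z}(Y)$) gives $az=z'a$, where $z'$ is $z$ transported along the bisection $\mathrm{dom}(a)$, carrying the same central values from source units to range units; the only question is whether $z'\in A$. I would then observe that $z'$ is realised inside $A$ exactly when $a$ has a partner inverting it throughout its domain: if $b\in S$ satisfies $ab,ba\in Z$ and $b(h^{-1})=a(h)^{-1}$ for \emph{every} $h\in\mathrm{dom}(a)$, then $azb(\mathsf{r}(h))=a(h)z(\mathsf{s}(h))a(h)^{-1}=z(\mathsf{s}(h))=z'(\mathsf{r}(h))$, so $z'=azb\in Z$ and hence $aZ\subseteq Za$; the symmetric computation with $ba$ gives $Za\subseteq aZ$, whence $a\in\mathsf{N}(Z)$. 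Thus any bump whose $Z$-involutive partner inverts it across the whole of $\mathrm{dom}(a)$ is automatically a normaliser, and so is that partner.

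With this criterion, \ref{Urysohn} and \ref{ZInvolutive} for $\mathsf{N}(Z)$ reduce to manufacturing such \emph{fully} invertible bumps. Given $g\in O$, I would use \ref{Urysohn} for $S$ to get $a_0\in S$ with $\mathrm{dom}(a_0)\subseteq O$ and $g\in\mathrm{int}([Y^\times]a_0)$, then \ref{ZInvolutive} to get a partner $b_0$ with $a_0b_0,b_0a_0\in Z$; here $a_0b_0$ and $b_0a_0$ take the value $1$ near $\mathsf{r}(g)$ and $\mathsf{s}(g)$, and $b_0$ inverts $a_0$ on an open $W\ni g$. The heart of the argument — and the step I expect to be the main obstacle — is precisely that \ref{ZInvolutive} only furnishes the inverse relation on a neighbourhood of $g$, whereas the criterion needs it on all of the bump's domain, and in a general (non-ample) étale groupoid $D$ carries no idempotents with which one could simply restrict $a_0$ to $W$. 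I would resolve this by localising the support into the inverse region, using the value-$1$ diagonal elements $a_0b_0,b_0a_0\in Z$ supplied by \ref{ZInvolutive}, together with \ref{1Proper} and local compactness of $G$, to compress $a_0$ to a bump $a\in S$ whose domain sits inside $W$ and on which a correspondingly compressed partner $b\in S$ inverts $a$ everywhere, while keeping $\mathrm{dom}(a)\subseteq O$ and $g\in\mathrm{int}([Y^\times]a)$ (this localisation is the genuine technical content, not a routine calculation). The criterion then places $a,b\in\mathsf{N}(Z)$, yielding \ref{Urysohn}. For \ref{ZInvolutive}, given $a\in\mathsf{N}(Z)$ and $g\in\mathrm{int}([Y^\times]a)$, the same localisation applied to a $Z$-involutive partner of $a$ produces a partner lying in $\mathsf{N}(Z)$ and still realising the inverses on a neighbourhood of $g^{-1}$, completing the verification that $\mathsf{N}(Z)$ is $Z$-bumpy.
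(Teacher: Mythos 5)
Your ``engine'' lemma in the second paragraph is fine as a computation: if $b\in S$ satisfies $ab,ba\in Z$ and $b(h^{-1})=a(h)^{-1}$ for \emph{every} $h\in\mathrm{dom}(a)$, then indeed $azb\in Z$ and $az=(azb)a$ for all $z\in Z$, so $a\in\mathsf{N}(Z)$. The genuine gap is that the step you defer as ``the genuine technical content'' \textendash\, compressing $a_0$ to a bump on which a partner inverts it \emph{everywhere} \textendash\, is not only missing but cannot be carried out in general. Your criterion forces $\mathrm{ran}(a)\subseteq Y^\times$, and in a bumpy semigroup on a non-ample groupoid there may be no nonempty element of $S$ with that property: take $G=\mathbb{R}$ regarded as a groupoid consisting only of units and $S=\bigcup_{O\in\mathcal{O}(G)}C_0(O,\mathbb{D}\setminus\{0\})$ as in \autoref{Bumpy=>LCetale}; then $S=N$ is $Z$-bumpy, but any $a\in S$ with $\mathrm{ran}(a)\subseteq Y^\times=\mathbb{T}$ has $\mathrm{dom}(a)=\{h:|a(h)|\geq 1/2\}$ compact and open in $\mathbb{R}$, hence empty. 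So no localisation scheme can manufacture the ``fully invertible bumps'' your plan requires, and the specific device you propose \textendash\, multiplying by $a_0b_0$ and $b_0a_0$ \textendash\, also fails on its own terms, since these elements of $Z$ take the value $1$ only near $\mathsf{r}(g)$ and $\mathsf{s}(g)$ and so multiplication by them distorts values rather than merely restricting domains. A secondary gap: you invoke \autoref{MN} to get $\mathsf{N}(Z)\subseteq N=S$ without verifying its hypothesis that $\mathrm{dom}[Z]$ is $T_0$; this too has to be proved.

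The paper threads this needle differently, and the difference is exactly what makes the proof work. It first establishes a claim: for every $g\in O\in\mathcal{O}(G^0)$ there is $z\in Z$ with $\mathrm{dom}(z)\subseteq O$ and $g\in\mathrm{int}([1]z)$ (namely $z=ab$ with $a$ from \ref{Urysohn} and $b$ from \ref{ZInvolutive}); since $G^0$ is Hausdorff this also yields the $T_0$ property needed for \autoref{MN}. Then, given $a$ and its partner $b$ with $ab,ba\in Z$, it applies this claim \emph{inside} the open sets $\mathrm{int}([1]ab)$ and $\mathrm{int}([1]ba)$ to get a second layer of bumps $c,d\in Z$ with $\mathrm{dom}(c)\subseteq\mathrm{int}([1]ab)$, $\mathrm{dom}(d)\subseteq\mathrm{int}([1]ba)$, each taking the value $1$ near the relevant unit. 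The compressed elements $cad$ and $dbc$ are the witnesses for \ref{Urysohn} and \ref{ZInvolutive}, and \textendash\, this is the key point \textendash\, they are shown to lie in $M\subseteq\mathsf{N}(Z)$ directly from the definition of $M$, with the original $a$ and $b$ serving as the witnesses $s,t$ (e.g.\ $ab\,cad=cad=cad\,ba$ because $ab\equiv1$ on $\mathrm{dom}(c)$ and $ba\equiv1$ on $\mathrm{dom}(d)$, while $bcad,cadb\in Z\subseteq C$). Membership in $M$ requires no invertibility of the values of $cad$ itself, only invertibility of $a$ near $g$, which is precisely how the paper sidesteps the obstruction that defeats your criterion-based route.
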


\begin{proof}
Say $g\in O\in\mathcal{O}(G^0)$.  We claim that we have $z\in Z$ with $\mathrm{dom}(z)\subseteq O$ and $g\in\mathrm{int}([1]z)$.  To see this, note that since $S$ satisfies \ref{Urysohn} there exists $a\in S$ with $\mathrm{dom}(a)\subseteq O$ and $g\in\mathrm{int}([Y^\times]a)$.  Then \ref{ZInvolutive} yields $b\in S$ with $ab,ba\in Z$ and
\[g^{-1}\in\mathrm{int}(\{h\in G:a(h^{-1})=b(h)^{-1}\}).\]
Thus $g=gg^{-1}\in\mathrm{int}([1]ab)$ and $\mathrm{dom}(ab)\subseteq\mathrm{dom}(a)\mathrm{dom}(b)\cap G^0\subseteq\mathrm{dom}(a)\subseteq O$, as $O\subseteq G^0$.  Thus $ab$ yields the required $z\in Z$, proving the claim.  In particular, as $G^0$ is Hausdorff and hence $T_0$, this $\mathrm{dom}[Z]$ is $T_0$.  Then $M\subseteq\mathsf{N}(Z)\subseteq N=S$, by \autoref{MN}.  As $S$ satisfies \ref{1Proper}, it follows that $\mathsf{N}(Z)$ does too.

Now take any $a\in S=N$, so \ref{ZInvolutive} yields $b\in S$ with $ab,ba\in Z$ and $g^{-1}\in O=\mathrm{int}(\{h\in G:a(h^{-1})=b(h)^{-1}\})$.  Thus $gg^{-1}\in\mathrm{int}([1]ab)$ and $g^{-1}g\in\mathrm{int}([1]ba)$ so the claim yields $c,d\in Z$ with
\begin{align*}
\mathrm{dom}(c)&\subseteq\mathrm{int}([1]ab)&\text{and}&&gg^{-1}&\in\mathrm{int}([1]c).\\
\mathrm{dom}(d)&\subseteq\mathrm{int}([1]ba)&\text{and}&&g^{-1}g&\in\mathrm{int}([1]d).
\end{align*}
Note that $a$ and $b$ witness $dbc\in M$, as $ab,ba\in Z$, $adbc,dbca\in Z\subseteq E\subseteq C$ and $badbc=dbc=dbcab$.  Also,
\[g^{-1}\in(\mathrm{int}([1]d))O(\mathrm{int}([1]c))\subseteq\mathrm{int}(\{h\in G:a(h^{-1})=dbc(h)^{-1}\}),\]
showing that $M$ and hence $\mathsf{N}(Z)$ satisfies \ref{ZInvolutive}.

Now take any $g\in O\in\mathcal{O}(G)$, so \ref{Urysohn} yields $a\in S$ with
\[\mathrm{dom}(a)\subseteq O\qquad\text{and}\qquad g\in\mathrm{int}([Y^\times]a).\]
Taking $b,c,d\in S$ as above, we see that $a$ and $b$ also witness $cad\in M$, as $ab,ba\in Z$, $bcad,cadb\in Z\subseteq C$ and $abcad=cad=cadba$.  Also,
\[g\in(\mathrm{int}([1]c))O(\mathrm{int}([1]d))\subseteq\mathrm{int}([Y^\times]cad)\]
and $\mathrm{dom}(cad)\subseteq\mathrm{dom}(a)\subseteq O$, showing that $M$  and hence $\mathsf{N}(Z)$ satisfies \ref{Urysohn}.
\end{proof}

\begin{cor}\label{GroupoidIsomorphism2}
Assume $G'$ and $Y'$ satisfy \autoref{Gass} and $A'\subseteq{}_{Y'}\mathcal{O}(G')$ also satisfies \autoref{Aass}, where we likewise define $Z'$, $D'$, $S'$, $C'$ and $N'$.  If
\begin{enumerate}
\item $D$ and $D'$ are exhaustive,
\item $S=N$ is $Z$-bumpy and $S'=N'$ is $Z'$-bumpy, and
\item $A$ and $A'$ are diagonally isomorphic semigroups
\end{enumerate}
then $G$ and $G'$ are isomorphic \'etale groupoids.
\end{cor}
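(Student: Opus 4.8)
The plan is to extract from the ambient semigroup $A$ a bumpy subsemigroup that is characterised \emph{purely algebraically}, so that the diagonal-preserving isomorphism $\phi\colon A\to A'$ of hypothesis (3) automatically carries it onto its primed counterpart, and then to invoke \autoref{GroupoidIsomorphism}. The natural candidate is $\mathsf{N}(Z)$: by \autoref{MZbumpy} it is already a $Z$-bumpy (in particular bumpy) semigroup under hypothesis (2), and normalisers are closed under products and preserved by any semigroup isomorphism. Thus the whole problem reduces to two checks: that $\phi$ maps $\mathsf{N}(Z)$ onto $\mathsf{N}(Z')$, and that the restriction $\phi|_{\mathsf{N}(Z)}$ is diagonal-preserving as a map between these bumpy semigroups.

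For the first check, I would run the algebraic characterisations through $\phi$ in sequence. Hypothesis (3) gives $\phi[\mathsf{D}(A)]=\mathsf{D}(A')$, that is $\phi[D]=D'$, since $\mathsf{D}(A)=D$ and $\mathsf{D}(A')=D'$. Because $D$ and $D'$ are exhaustive by (1), \autoref{ZE} identifies $Z=\mathsf{Z}(D)$ and $Z'=\mathsf{Z}(D')$; as the centre of a semigroup is an isomorphism invariant and $\phi$ restricts to an isomorphism $D\to D'$, this yields $\phi[Z]=\phi[\mathsf{Z}(D)]=\mathsf{Z}(\phi[D])=\mathsf{Z}(D')=Z'$. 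Finally, since $\phi$ is a homomorphism of $A$ and the defining products $aZ$, $Za$ (with each $z\in Z$ having domain in $G^0$, hence a bisection, so \eqref{FunctionProduct} applies) are carried to $\phi(a)Z'$ and $Z'\phi(a)$, one obtains $\phi[\mathsf{N}(Z)]=\mathsf{N}(\phi[Z])=\mathsf{N}(Z')$.

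For the diagonal condition, since $\mathsf{N}(Z)\subseteq A$ we have $\mathsf{D}(\mathsf{N}(Z))=\mathsf{N}(Z)\cap{}_Y\mathcal{P}(G^0)\subseteq A\cap{}_Y\mathcal{P}(G^0)=D$, while the chain $D\subseteq C\subseteq\mathsf{C}(Z)\subseteq\mathsf{N}(Z)$ recorded after \autoref{MN}, together with $D\subseteq{}_Y\mathcal{P}(G^0)$, gives the reverse inclusion $D\subseteq\mathsf{D}(\mathsf{N}(Z))$. Hence $\mathsf{D}(\mathsf{N}(Z))=D$ and, symmetrically, $\mathsf{D}(\mathsf{N}(Z'))=D'$; combined with $\phi[D]=D'$ this makes $\phi|_{\mathsf{N}(Z)}$ diagonal-preserving. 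Since hypothesis (2) lets \autoref{MZbumpy} and its primed analogue guarantee that $\mathsf{N}(Z)\subseteq{}_Y\mathcal{B}^\circ(G)$ and $\mathsf{N}(Z')\subseteq{}_{Y'}\mathcal{B}^\circ(G')$ are bumpy, \autoref{GroupoidIsomorphism} applies to the diagonal-preserving isomorphism $\phi|_{\mathsf{N}(Z)}$ and produces the desired \'etale groupoid isomorphism $G\cong G'$.

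The substantive work is all contained in \autoref{MN} and \autoref{MZbumpy}; the remaining difficulty here is essentially organisational, namely being careful that each algebraically-defined set ($D$, then $Z=\mathsf{Z}(D)$, then $\mathsf{N}(Z)$) is genuinely invariant under $\phi$ and that the diagonals of $\mathsf{N}(Z)$ and $\mathsf{N}(Z')$ line up with $D$ and $D'$, so that the hypotheses of \autoref{GroupoidIsomorphism} are met exactly. I expect the only place demanding real care is the step $\phi[Z]=Z'$, since it is where exhaustiveness (1) is consumed in order to promote the a priori non-algebraic set $Z$ to the manifestly invariant centre $\mathsf{Z}(D)$.
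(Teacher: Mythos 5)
Your proposal is correct and follows essentially the same route as the paper's own proof: restrict $\phi$ to $\mathsf{N}(Z)=\mathsf{N}(\mathsf{Z}(D))$ via \autoref{ZE}, invoke \autoref{MZbumpy} for bumpiness, and conclude with \autoref{GroupoidIsomorphism}. The paper leaves implicit the two checks you spell out (invariance of $\mathsf{N}(Z)$ under $\phi$ and the identification $\mathsf{D}(\mathsf{N}(Z))=D$ via the chain $D\subseteq C\subseteq\mathsf{C}(Z)\subseteq\mathsf{N}(Z)$), so your write-up is a more detailed version of the same argument.
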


\begin{proof}
If $\phi:A\rightarrow A'$ is a diagonal-preserving semigroup isomorphism, so its restriction to $\mathsf{N}(Z)=\mathsf{N}(\mathsf{Z}(D))$ (see \autoref{ZE}), which is thus diagonally isomorphic to $\mathsf{N}(Z')=\mathsf{N}(\mathsf{Z}(D'))$.  By \autoref{MZbumpy}, $\mathsf{N}(Z)$ and $\mathsf{N}(Z')$ are $Z$-bumpy semigroups.  By \autoref{GroupoidIsomorphism}, it follows that $G$ and $G'$ are isomorphic \'etale groupoids.
\end{proof}

Recall that $G$ is \emph{effective} when the interior of the isotropy is the unit space, i.e. $G^0=\mathrm{int}(G^\mathrm{iso})$.  Equivalently, $G$ is effective when every open isosection is an open bisection, i.e. $\mathcal{B}^\circ(G)=\mathcal{I}^\circ(G)=\{\mathrm{int}(I):II^{-1}\cup I^{-1}I\subseteq G^\mathrm{iso}\}$, in which case we automatically have $S=N$ (although it is quite possible to have $S=N$ even when $G$ is not effective, e.g. when we start with bumpy $S$ and just take $A=S$). This is the situation considered in many reconstruction theorems.  For example, we can consider \autoref{GroupoidIsomorphism2} when $A=C^*_r(G)$ and $A'=C^*_r(G')$ are the reduced groupoid C*-algebras of effective locally compact Hausdorff $G$ and $G'$.  Following \cite[Proposition~II.4.2]{Renault1980} we view elements of the C*-algebras as functions on $G$.   As in \autoref{Y^G}, we identify each $a$ with $a|_{\mathrm{supp}(a)}$, which is then a partial function to $Y=\mathsf{Z}(Y)=\mathbb{C}\setminus\{0\}$.  Note that the convolution product agrees with the product defined in \ref{FunctionProduct} when functions are supported on bisections.  In this case, the diagonals are automatically exhaustive and hence \autoref{GroupoidIsomorphism2} implies \autoref{cor:renault} (even without second countability).  We can also take $A=F^p_\lambda(G)$ and $A'=F^p_\lambda(G')$ to be the reduced $L^p$-algebras of $G$ and $G'$ (again identifying $a$ with $a|_{\mathrm{supp}(a)}$) in which case \autoref{GroupoidIsomorphism2} yields \cite[Corollary 5.6]{ChoiGardellaThiel2019}.

Again, the precise methods of reconstruction differ \textendash\, instead of using ultrafilters to recover $G$, \cite{Renault2008} and \cite{ChoiGardellaThiel2019} first use characters to recover $G^0$ and then use germs to recover the rest of $G$.  One advantage of ultrafilters over characters/germs is that they only depend on the product rather than the full algebra structure.  Consequently, we only need a semigroup isomorphism between the C*-algebras or $L^p$-algebras rather than an isometric algebra isomorphism.

\begin{rmk}\autoref{GroupoidIsomorphism2} does not recover all of \cite[Theorem~3.3]{CarlsenRuizSimsTomforde2017}, another reconstruction theorem for reduced groupoid C*-algebras.   That result requires the interior isotropy groups to be torsion-free abelian.  We do not know how this condition relates to our requirement that $S=N$.\end{rmk}


\section{Normalisers}

Here we make some further comments about normalisers.  These are not needed for the main results, however they clarify the relationship between $M$, $N$, $\mathsf{N}(Z)$ and the ``*-normalisers'' usually defined within a C*-algebra $A$ by
\[\mathsf{N}^*(D)=\{a\in A:aDa^*\cup a^*Da\subseteq D\}.\]

First, let us call $S$ \emph{compact-$Z$-bumpy} if $S$ is compact-bumpy and
\begin{itemize}
\myitem[(Compact-$Z$-Involutive)]\label{CompactZInvolutive} If $a\in S$ and $B\subseteq[Y^\times]a$ is compact,
there exists $b\in S$ with $ab,ba\in Z$ and
\[a(g)^{-1}=b(g^{-1}),\quad\text{for all }g\in B.\]
\end{itemize}
Again the examples in \autoref{BumpySemigroups} are all compact-$Z$-bumpy.
When $S$ is compact-$Z$-bumpy, elements of $M$ can be characterised as follows.

\begin{prp}
Assume $S$ is compact-$Z$-bumpy and $a\in A$.  Then $a\in M$ if and only if there exists a compact bisection $B\in\mathcal{B}_\mathsf{c}(G)$ satisfying
\begin{equation}\label{Mconditions}
\mathsf{s}[\mathrm{dom}(a)]\subseteq\mathsf{r}[B],\quad\mathsf{r}[\mathrm{dom}(a)]\subseteq\mathsf{s}[B]\quad\text{and}\quad B\mathrm{dom}(a)\cup\mathrm{dom}(a)B\subseteq G^{\mathrm{iso}}.
\end{equation}
\end{prp}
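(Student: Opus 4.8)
The plan is to prove the two implications separately, in each case exploiting that a witnessing pair $s,t$ and the sought compact bisection $B$ can be converted into one another, with $B$ playing the role of a compact ``core'' of $\mathrm{dom}(t)$.

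For the forward implication, suppose $a\in M$ with witnesses $s,t\in S$, so $sta=a=ats$, $ta,at\in C$ and $st,ts\in Z$. First I would record the two key containments $\mathsf{s}[\mathrm{dom}(a)]\subseteq[1](ts)$ and $\mathsf{r}[\mathrm{dom}(a)]\subseteq[1](st)$: writing $ats=a(ts)$ and using $\mathrm{dom}(ts)\subseteq G^0$, every $g\in\mathrm{dom}(a)=\mathrm{dom}(a(ts))$ satisfies $a(g)=a(g)\,ts(\mathsf{s}(g))$, whence $ts(\mathsf{s}(g))=1$ by \eqref{1Cancellative}; the other containment is symmetric via $sta=(st)a$. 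Since $st,ts\in Z\subseteq S$, both $[1](ts)$ and $[1](st)$ are compact by \ref{1Proper}. Next, because $\mathrm{dom}(t)$ is an open bisection, $\mathsf{r}$ and $\mathsf{s}$ restrict to homeomorphisms on $\mathrm{dom}(t)$; as $[1](ts)\subseteq\mathsf{r}[\mathrm{dom}(t)]$ and $[1](st)\subseteq\mathsf{s}[\mathrm{dom}(t)]$ (each unit there being the range, resp.\ source, of a unique element of $\mathrm{dom}(t)$), I can pull these compacta back to compact sets
\[B_1=(\mathsf{r}|_{\mathrm{dom}(t)})^{-1}\big[[1](ts)\big],\qquad B_2=(\mathsf{s}|_{\mathrm{dom}(t)})^{-1}\big[[1](st)\big]\]
inside $\mathrm{dom}(t)$, and set $B=B_1\cup B_2$. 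Then $B$ is a compact bisection (a subset of the bisection $\mathrm{dom}(t)$), and $\mathsf{r}[B]\supseteq[1](ts)\supseteq\mathsf{s}[\mathrm{dom}(a)]$, $\mathsf{s}[B]\supseteq[1](st)\supseteq\mathsf{r}[\mathrm{dom}(a)]$ give the first two conditions in \eqref{Mconditions}; the third follows since $B\subseteq\mathrm{dom}(t)$ forces $B\mathrm{dom}(a)\cup\mathrm{dom}(a)B\subseteq\mathrm{dom}(ta)\cup\mathrm{dom}(at)\subseteq G^{\mathrm{iso}}$, using $ta,at\in C$.

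For the converse, suppose $B\in\mathcal{B}_\mathsf{c}(G)$ satisfies \eqref{Mconditions}. I would first produce an open bisection $N\supseteq B$ via \cite[Proposition 6.3]{BiceStarling2018} (as in \autoref{remarks compact bumpy}), then apply \ref{CompactUrysohn} to $B\subseteq N$ to obtain $t\in S$ with $\mathrm{dom}(t)\subseteq N$ and $B\subseteq[Y^\times]t$, and finally apply \ref{CompactZInvolutive} to the compact $B\subseteq[Y^\times]t$ to obtain $s\in S$ with $ts,st\in Z$ and $t(g)^{-1}=s(g^{-1})$ for all $g\in B$. It then remains to check that $s,t$ witness $a\in M$. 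That $a\in N$, i.e.\ $\mathrm{dom}(a)\in\mathcal{I}(G)$, follows from \eqref{Mconditions}: if $g,h\in\mathrm{dom}(a)$ with $\mathsf{s}(g)=\mathsf{s}(h)$, pick $b\in B$ with $\mathsf{r}(b)=\mathsf{s}(g)$ (first condition); then $gb,hb\in\mathrm{dom}(a)B\subseteq G^{\mathrm{iso}}$ give $\mathsf{r}(g)=\mathsf{s}(b)=\mathsf{r}(h)$, and symmetrically using the second condition. For $ats=a$ I would note $\mathsf{s}(g)=\mathsf{r}(b)$ for some $b\in B$ and $ts(\mathsf{r}(b))=t(b)s(b^{-1})=t(b)t(b)^{-1}=1$, so $ats(g)=a(g)\,ts(\mathsf{s}(g))=a(g)$; the identity $sta=a$ is symmetric using the second condition and $st(\mathsf{s}(b))=s(b^{-1})t(b)=1$.

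The step I expect to be the main obstacle is verifying $at,ta\in C$ in the converse, since a priori $\mathrm{dom}(t)$ is strictly larger than $B$ and could drag products out of $G^{\mathrm{iso}}$. The resolution is that $\mathrm{dom}(t)$ is a bisection containing $B$: for any $gk\in\mathrm{dom}(a)\mathrm{dom}(t)$ we have $\mathsf{r}(k)=\mathsf{s}(g)\in\mathsf{r}[B]$ by the first condition of \eqref{Mconditions}, so the unique $b\in B$ with $\mathsf{r}(b)=\mathsf{r}(k)$ must equal $k$ by bisectionality, whence $gk=gb\in\mathrm{dom}(a)B\subseteq G^{\mathrm{iso}}$; thus $\mathrm{dom}(at)=\mathrm{dom}(a)\mathrm{dom}(t)\subseteq G^{\mathrm{iso}}$, and $\mathrm{dom}(ta)\subseteq G^{\mathrm{iso}}$ follows symmetrically from the second condition. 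Together with $ts,st\in Z$ and $ats=a=sta$, this exhibits $s,t$ as witnesses for $a\in M$.
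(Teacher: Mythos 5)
Your proof is correct and is essentially the paper's own argument: in the forward direction the paper takes the single compact set $B=\mathrm{dom}(s)^{-1}[1]st\subseteq\mathrm{dom}(t)$, which coincides with your $B_2$, and in the converse it likewise extends $B$ to an open bisection via \cite[Proposition 6.3]{BiceStarling2018} and then applies \ref{CompactUrysohn} followed by \ref{CompactZInvolutive}. The only differences are matters of detail: taking $B_1\cup B_2$ sidesteps the small central-idempotent computation needed to see that $B_2$ alone already satisfies $\mathsf{s}[\mathrm{dom}(a)]\subseteq\mathsf{r}[B]$, and your bisection-uniqueness argument for $at,ta\in C$ spells out a step the paper records only with ``hence''.
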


\begin{proof}
If $a\in M$ then we have $b,c\in S$ with $ab,ba\in C$, $bc,cb\in Z$ and $abc=a=cba$.  Then $B=\mathrm{dom}(c)^{-1}[1]cb\subseteq\mathrm{dom}(b)$ satisfies \eqref{Mconditions} and is compact by \ref{1Proper}.

Conversely, say $a\in A$ and $B\in\mathcal{B}_\mathsf{c}(G)$ satisfy \eqref{Mconditions}.  Note $\mathrm{dom}(a)=\mathrm{dom}(a)BB^{-1}$ is an isosection, as $\mathsf{s}[\mathrm{dom}(a)]\subseteq\mathsf{r}[B]$ and both $\mathrm{dom}(a)B$ and $B^{-1}$ are isosections, i.e. $a\in N$.  By \cite[Proposition 6.3]{BiceStarling2018}, $B$ is contained in some $O\in\mathcal{B}^\circ(G)$ and then \ref{CompactUrysohn} yields $b\in S$ with $\mathrm{dom}(b)\subseteq O$ and $B\subseteq[Y^\times]b$ and hence $ab,ba\in C$.  Then \ref{CompactZInvolutive} yields $c\in S$ with $bc,cb\in Z$ and $b(g)^{-1}=c(g^{-1})$, for all $g\in B$, so $abc=a=cba$ and hence $a\in M$.
\end{proof}

When $G$ is effective, the characterisation of $M$ above can be further simplified.  Specifically, $a\in M$ precisely when $a$ is supported on a compact bisection.

\begin{prp}
If $S$ is compact-$Z$-bumpy and $G$ is effective then
\[M=\{a\in A:\exists B\in\mathcal{B}_\mathsf{c}(G)\ (\mathrm{dom}(a)\subseteq B)\}.\]
\end{prp}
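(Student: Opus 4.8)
The plan is to deduce both inclusions from the characterization of $M$ via \eqref{Mconditions} in the preceding proposition, invoking effectiveness only for the harder containment. For the inclusion $\supseteq$, suppose $a\in A$ with $\mathrm{dom}(a)\subseteq B$ for some $B\in\mathcal{B}_\mathsf{c}(G)$. I would simply check that $B^{-1}$ (itself a compact bisection, since the involution is a homeomorphism on the \'etale $G$) witnesses \eqref{Mconditions}: indeed $\mathsf{r}[B^{-1}]=\mathsf{s}[B]\supseteq\mathsf{s}[\mathrm{dom}(a)]$ and $\mathsf{s}[B^{-1}]=\mathsf{r}[B]\supseteq\mathsf{r}[\mathrm{dom}(a)]$, while $B^{-1}\mathrm{dom}(a)\subseteq B^{-1}B\subseteq G^0$ and $\mathrm{dom}(a)B^{-1}\subseteq BB^{-1}\subseteq G^0\subseteq G^{\mathrm{iso}}$, so the preceding proposition gives $a\in M$. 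This direction needs neither effectiveness nor compact-$Z$-bumpiness beyond the cited characterization.

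For the inclusion $\subseteq$, take $a\in M$ and let $B\in\mathcal{B}_\mathsf{c}(G)$ satisfy \eqref{Mconditions}; the goal is to show $\mathrm{dom}(a)\subseteq B^{-1}$. Since $M\subseteq N$, the set $\mathrm{dom}(a)$ is an open isosection, so effectiveness (in the form $\mathcal{I}^\circ(G)=\mathcal{B}^\circ(G)$) upgrades it to an open bisection. The source/range containments in \eqref{Mconditions} then let me assign to each $g\in\mathrm{dom}(a)$ the unique $b(g)\in B$ with $\mathsf{s}(b(g))=\mathsf{r}(g)$, and uniqueness in the bisection $B$ shows $W:=B\,\mathrm{dom}(a)=\{b(g)g:g\in\mathrm{dom}(a)\}$. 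By \eqref{Mconditions} this $W$ lies in $G^{\mathrm{iso}}$, and since $\mathsf{s}(b(g)g)=\mathsf{s}(g)$ we have $\mathsf{s}[W]=\mathsf{s}[\mathrm{dom}(a)]=:U$, an open subset of $G^0$.

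The crux is to show $W$ is open. For this I would use \cite[Proposition 6.3]{BiceStarling2018} to embed $B$ in an open bisection $O$; then $O\,\mathrm{dom}(a)$ is an open bisection, as multiplication is open on the \'etale $G$. I claim $W=(O\,\mathrm{dom}(a))\cap\mathsf{s}^{-1}[U]$. The containment $\subseteq$ is immediate. For $\supseteq$, any element of the bisection $O\,\mathrm{dom}(a)$ with source $x\in U$ must equal $og$ where $g$ is the unique element of $\mathrm{dom}(a)$ of source $x$ and $o$ is the unique element of $O$ of source $\mathsf{r}(g)$; but $b(g)\in B\subseteq O$ already has source $\mathsf{r}(g)$, so uniqueness in $O$ forces $o=b(g)$ and the element is $b(g)g\in W$. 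As $U$ is open and $\mathsf{s}$ continuous, this realizes $W$ as an open set. Now $W$ is an open subset of $G^{\mathrm{iso}}$, so effectiveness gives $W\subseteq\mathrm{int}(G^{\mathrm{iso}})=G^0$; hence each $b(g)g\in G^0$, which forces $b(g)g=\mathsf{s}(g)$ and therefore $g=b(g)^{-1}\in B^{-1}$. Thus $\mathrm{dom}(a)\subseteq B^{-1}\in\mathcal{B}_\mathsf{c}(G)$, as required.

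I expect the main obstacle to be precisely the openness of $W=B\,\mathrm{dom}(a)$: the product of a merely compact bisection with an open one need not be open in general, and the argument above circumvents this by exhibiting $W$ as a source-restriction of the genuinely open bisection $O\,\mathrm{dom}(a)$. Everything else — the verification of \eqref{Mconditions} for $B^{-1}$, the continuity and uniqueness bookkeeping defining $b(\cdot)$, and the final collapse of $W$ into $G^0$ via effectiveness — should be routine.
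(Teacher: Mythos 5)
Your proof is correct and takes essentially the same route as the paper's: for $\supseteq$ both check that $B^{-1}$ witnesses \eqref{Mconditions}, and for $\subseteq$ both extend $B$ to an open bisection $O$ via \cite[Proposition 6.3]{BiceStarling2018}, identify the product of $\mathrm{dom}(a)$ with $B$ and with $O$ to obtain an open subset of $G^\mathrm{iso}$, and use effectiveness to push it into $G^0$, forcing $\mathrm{dom}(a)\subseteq B^{-1}$. The only differences are cosmetic: you work with $B\,\mathrm{dom}(a)$ where the paper uses $\mathrm{dom}(a)B$, and your intersection with $\mathsf{s}^{-1}[U]$ is redundant, since every element of $O\,\mathrm{dom}(a)$ already has source in $U=\mathsf{s}[\mathrm{dom}(a)]$, so your argument in fact shows $W=O\,\mathrm{dom}(a)$ directly, exactly as in the paper.
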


\begin{proof}
If $\mathrm{dom}(a)\subseteq B\in\mathcal{B}_\mathsf{c}(G)$ then $B^{-1}$ satisfies \eqref{Mconditions} so $a\in M$.  Conversely, if $a\in M$ then we have $B\in\mathcal{B}_\mathsf{c}(G)$ satisfying \eqref{Mconditions}.  By \cite[Proposition 6.3]{BiceStarling2018}, we can extend $B$ to some open bisection $O\in\mathcal{B}^\circ(G)$.  Then
\[\mathrm{dom}(a)B=\mathrm{dom}(a)O\in\mathrm{int}(G^\mathrm{iso})=G^0\]
and hence $\mathrm{dom}(a)=\mathrm{dom}(a)BB^{-1}\subseteq B^{-1}\in\mathcal{B}_\mathsf{c}(G)$.
\end{proof}

In particular, if $G$ is also Hausdorff then
\[M=\{a\in A:\mathrm{cl}(\mathrm{dom}(a))\in\mathcal{B}_\mathsf{c}(G)\}.\]
Here is an example of this situation where both inclusions in \autoref{MN} are strict.

\begin{xpl}\label{StrictNormalisers}
Consider the principal groupoid/equivalence relation
\[G=\{(z,-z):z\in\mathbb{Z}\}\cup\{(z,z):z\in\mathbb{Z}\}.\]
Declare $O\subseteq G$ to be open if $(0,0)\notin O$ or $\{(n,n):n\geq0\}\setminus O$ is finite.  Let
\[A=\bigcup_{O\in\mathcal{O}(G)}C_0(O,\mathbb{C}\setminus\{0\})\approx C^*_r(G).\]
Define $a\in S=N$ and $d,u\in D=C=Z\subseteq\mathsf{N}(Z)$ by
\begin{align*}
\mathrm{dom}(a)&=\{(n,-n):n>0\}&&\text{and}&a((n,-n))&=1/n.\\
\mathrm{dom}(d)&=\{(-n,-n):n>0\}&&\text{and}&d((-n,-n))&=1/n.\\
\mathrm{dom}(u)&=\{(n,n):n\geq0\}&&\text{and}&u((n,n))&=1.
\end{align*}
Note $\mathrm{dom}(d)$ (or even any subset of $G$ containing $\mathrm{dom}(d)$) is not compact so $d\notin M$.  Also $a\notin aZ$, for if we had $z\in Z$ with $a=az$ then $z((-n,-n))=1$, for all $n\in\mathbb{N}$, i.e. $[1]z\supseteq\mathrm{dom}(e)$ is not compact which means $z$ is not proper, a contradiction.  Thus $a=ua\in Za\setminus aZ$ and hence $a\notin\mathsf{N}(Z)$, showing that $M\subsetneqq\mathsf{N}(Z)\subsetneqq N$.
\end{xpl}
Further note that $\mathsf{N}^*(D)=N$ in the example above, showing that normalisers need not be the same as the *-normalisers usually considered in C*-algebras.


\section{Semigroupoids}\label{Semigroupoids}

As mentioned in \autoref{0rmk}, given a domain $R$, we remove $0$ to obtain a $1$-cancellative semigroup $Y=R\setminus\{0\}$.  However, if $R$ is just a ring then the product is only partially defined on $R\setminus\{0\}$.  To deal with this, in this section we consider more general \emph{semigroupoid} $Y$,
i.e. we only have a partial associative product on $Y$.  Here associativity means $x(yz)$ is defined iff $(xy)z$ is, in which case they are equal.

A \emph{unit} $1$ in a semigroupoid $Y$ is element such that, for any $y\in Y$, both $1y$ and $y1$ are defined and equal to $y$.  We denote the invertible and regular elements of $Y$ by
\begin{align}
\tag{Invertible Elements}Y^\times&=\{y\in Y:\exists y^{-1}\in Y\ (yy^{-1}=1=y^{-1}y)\}.\\
\tag{Regular Elements}Y^\mathsf{R}&=\{y\in Y:\exists y'\in Y\ (yy'y=y\text{ and }y'yy'=y')\}
\end{align}
Note that products with invertible elements are always defined and hence $Y^\times$ is a group.  Also note that $yy'y=y$ implies both $y'yy'y=y'y$ and $yy'yy'=yy'$, i.e. both $y'y$ and $yy'$ are idempotents.  Under \eqref{1Cancellative}, the only idempotent is the unit $1$, from which it follows that $Y^\mathsf{R}=Y^\times$.

We define the centre $\mathsf{Z}(Y)=\{z\in Y:\ yz=zy \text{ for all }y \in Y\}$ as before, where $yz=zy$ means `$yz$ is defined iff $zy$ is defined, in which case they are equal'.  We call unital $Y$ \emph{indecomposable} if $1$ is the only central idempotent.  Given any $Z\subseteq\mathsf{Z}(Y)$, we define the \emph{$Z$-regular} elements by
\[Y^\mathsf{R}_Z=\{y\in Y:\exists y'\in Y\ (yy',y'y\in Z,\ yy'y=y\text{ and }y'yy'=y')\}.\]
As above, $yy'$ and $y'y$ here are both central idempotents so $Y^\mathsf{R}_Z=Y^\times$ if $Y$ is indecomposable.  Let us now make this a standing assumption, replacing the previous stronger \eqref{1Cancellative} assumption.  Our previous assumptions on $G$ remain.

\begin{asss}\label{IndecomposableSemigroupoid}\
\begin{enumerate}
\item $G$ is both a groupoid and a topological space with Hausdorff unit space $G^0$.
\item $Y$ is an indecomposable semigroupoid.
\end{enumerate}
\end{asss}

Like before in \eqref{FunctionProduct}, for any $a,b\in{}_Y\mathcal{P}(G)$ such that either $\mathrm{dom}(a)$ or $\mathrm{dom}(b)$ is a bisection, we can define $ab\in{}_Y\mathcal{P}(G)$ by
\begin{equation}\label{FunctionProduct2}
ab(gh)=a(g)b(h)
\end{equation}
whenever possible, i.e. whenever $g\in\mathrm{dom}(a)$, $h\in\mathrm{dom}(b)$, $gh$ is defined and $a(g)b(h)$ is defined (the only thing to note is that $\mathrm{dom}(ab)$ can now be a proper subset of $\mathrm{dom}(a)\mathrm{dom}(b)$).  So we can still take \autoref{Aass} to be in force, just with more general semigroupoid $Y$.

\begin{asss}\label{AassAgain}
As before in \autoref{Aass}, we assume we are given a semigroup $A\subseteq{}_Y\mathcal{O}(G)$ on which the product is given by \eqref{FunctionProduct2} whenever $\mathrm{dom}(a)$ or $\mathrm{dom}(b)$ is a bisection and from which we define subsets $Z$, $D$, $S$, $C$ and $N$, as well as
\[R=A\cap{}_{Y^\times}\mathcal{B}_\mathsf{c}(G)=\{a\in S:\mathrm{dom}(a)\text{ is compact and }\mathrm{ran}(a)\subseteq Y^\times\}.\]
\end{asss}
Our aim in what follows is to find conditions to ensure that a diagonal-preserving semigroup isomorphism from
 $A\subseteq{}_Y\mathcal{O}(G)$  to  $A'\subseteq{}_Y\mathcal{O}(G')$  restricts to a diagonal-preserving semigroup isomorphism between bumpy semigroups $R$ and $R'$ when $S$ and $S'$ are compact-bumpy.    Then we apply \autoref{GroupoidIsomorphism}  to prove \autoref{GroupoidIsomorphism3}.
\begin{prp}\label{RZS}
If $S$ is compact-bumpy then \[R=S^\mathsf{R}_Z := \{a\in S:\exists a'\in S\ (aa',a'a\in Z,\ aa'a=a\text{ and }a'aa'=a')\}.\]
\end{prp}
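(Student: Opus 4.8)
The plan is to establish the two inclusions separately, using throughout that every element of $S$ has an \emph{open bisection} domain (as $A\subseteq{}_Y\mathcal{O}(G)$), that $S$ is closed under the product, and that by indecomposability (\autoref{IndecomposableSemigroupoid}) the only central idempotent of $Y$ is $1$. Since $S$ is compact-bumpy it is bumpy, so $G$ is locally compact and \'etale by \autoref{Bumpy=>LCetale}.

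For $S^\mathsf{R}_Z\subseteq R$, suppose $a\in S$ has a witness $a'\in S$ with $aa',a'a\in Z$, $aa'a=a$ and $a'aa'=a'$. First I would note that $p:=a'a$ and $q:=aa'$ are idempotents (e.g. $q^2=(aa'a)a'=aa'=q$) lying in $Z$, so their values are central idempotents and hence all equal $1$. Fixing $g\in\mathrm{dom}(a)$, the bisection property of $\mathrm{dom}(a)$ and $\mathrm{dom}(a')$ forces the only composable factorisation of $\mathsf{s}(g)=g^{-1}g$ through $\mathrm{dom}(a')\mathrm{dom}(a)$ to be $g^{-1}\cdot g$, whence $p(\mathsf{s}(g))=a'(g^{-1})a(g)=1$; symmetrically $q(\mathsf{r}(g))=a(g)a'(g^{-1})=1$, so $a(g)\in Y^\times$ and $\mathrm{ran}(a)\subseteq Y^\times$. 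For compactness, since $q$ is constantly $1$ we have $[1]q=\mathrm{dom}(q)=\mathsf{r}[\mathrm{dom}(a)]$, which is compact by \ref{1Proper}; as $\mathrm{dom}(a)$ is an open bisection, $\mathsf{r}$ restricts to a homeomorphism of $\mathrm{dom}(a)$ onto $\mathsf{r}[\mathrm{dom}(a)]$, so $\mathrm{dom}(a)$ is compact and $a\in R$.

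For $R\subseteq S^\mathsf{R}_Z$, take $a\in R$, so $B:=\mathrm{dom}(a)$ is a compact open bisection with $[Y^\times]a=B$. Applying \ref{CompactInvolutive} to the compact set $B$ yields $b\in S$ with $b(g^{-1})=a(g)^{-1}$ for all $g\in B$, so $B^{-1}\subseteq\mathrm{dom}(b)$. The key observation is that $\mathrm{dom}(b)$ is a bisection already containing $g^{-1}$, which occupies the range fibre $\mathsf{s}(g)$ and the source fibre $\mathsf{r}(g)$; hence in forming $ab$ the only admissible composable pairs are $g\cdot g^{-1}$, giving $\mathrm{dom}(ab)\subseteq G^0$ and $ab(\mathsf{r}(g))=a(g)a(g)^{-1}=1$, so $ab\in Z$, and symmetrically $ba\in Z$. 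Using that $ab,ba$ are constantly $1$ on $\mathsf{r}[B]$ and $\mathsf{s}[B]$ one then gets $aba=a$. To secure the full two-sided regularity I would take $a':=bab$: the identity $aba=a$ gives directly $aa'a=a$ and $a'aa'=a'$ (collapsing $aba$ to $a$ twice), while $aa'=ab\in Z$ and $a'a=ba\in Z$, so $a'\in S$ witnesses $a\in S^\mathsf{R}_Z$.

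The main obstacle is this $R\subseteq S^\mathsf{R}_Z$ direction, and specifically controlling $ab$ and $ba$: \ref{CompactInvolutive} pins down the \emph{values} of $b$ on $B^{-1}$ but not its \emph{domain}, which may be strictly larger, so a priori these products could pick up stray points outside $G^0$. The resolution is that the bisection property of $\mathrm{dom}(b)$ automatically excludes such points, since $g^{-1}$ already saturates the relevant source and range fibres; this is also precisely why one must symmetrise to $a'=bab$ rather than take $a'=b$, as $b$ itself need not satisfy $bab=b$ when $\mathrm{dom}(b)\supsetneq B^{-1}$.
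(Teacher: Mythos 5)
Your proof is correct and follows essentially the same route as the paper: the forward inclusion via the pointwise observation that the values of $aa'$ and $a'a$ are central idempotents, hence $1$ by indecomposability, together with \ref{1Proper} for compactness of $\mathrm{dom}(a)$; the converse via \ref{CompactInvolutive} and the bisection property of domains. The one point where you diverge is in fact an improvement: the paper takes the $a'$ produced by \ref{CompactInvolutive} and asserts $a'aa'=a'$ outright, but, as you observe, \ref{CompactInvolutive} only controls the values of $a'$ on $\mathrm{dom}(a)^{-1}$ and not its domain, so when $\mathrm{dom}(a')\supsetneq\mathrm{dom}(a)^{-1}$ one only gets $a'aa'=a'|_{\mathrm{dom}(a)^{-1}}\neq a'$. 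Your symmetrisation $a'=bab$ (equivalently, replacing the paper's $a'$ by $a'aa'$, which lies in $S$ since $S$ is a subsemigroup) is exactly the repair needed, so in this respect your write-up is more careful than the paper's own proof.
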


\begin{proof}
If $a\in S^\mathsf{R}_Z$ then we have $a'\in S$ such that, for all $g\in\mathrm{dom}(a)$,
\[a(g)a'(g^{-1})a(g)=a(g),\quad a'(g^{-1})a(g)a'(g^{-1})=a'(g^{-1})\]
and $a(g)a'(g^{-1}),a'(g^{-1})a(g)\in\mathsf{Z}(Y)$ are idempotents.  Thus $a(g)\in Y^{\mathsf{R}}_Z$ and hence $a(g)^{-1}=a'(g^{-1})$, by \autoref{IndecomposableSemigroupoid},  and $\mathrm{ran}(a)\subseteq Y^\times$.  Also $\mathrm{dom}(a)=\mathrm{dom}(a)[1]a'a$ is compact, by \ref{1Proper}, i.e. $a\in R$, showing that $S^\mathsf{R}_Z\subseteq R$.

Conversely, if $a\in R$ then  $\mathrm{dom}(a)$ is compact and $\mathrm{ran}(a)\subseteq Y^\times$ so \ref{CompactInvolutive} yields $a'\in S$ such that $a(g)^{-1}=a'(g^{-1})$, for all $g\in\mathrm{dom}(a)$.  Thus $aa'$ and $a'a$ are the characteristic functions of $\mathsf{r}[\mathrm{dom}(a)]$ and $\mathsf{s}[\mathrm{dom}(a)]$ respectively so $aa',a'a\in Z$, $aa'a=a$ and $a'aa'=a'$, showing that $a\in S^\mathsf{R}_Z$.
\end{proof}

The following lemma is a useful fact about \'etale groupoids.
\begin{lm}\label{Splitting}
If $G$ is \'etale and $O,O'\in\mathcal{B}^\circ_\mathsf{c}(G)$
then we have $O''\in\mathcal{B}^\circ_\mathsf{c}(G)$ with
\[O''\subseteq O'\quad\text{and}\quad\mathsf{s}[O'']=\mathsf{s}[O']\setminus\mathsf{s}[O]\quad(\text{or, equivalently, }\mathsf{r}[O'']=\mathsf{r}[O']\setminus\mathsf{r}[O]).\]
\end{lm}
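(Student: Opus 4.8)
The plan is to restrict $O'$ to the part of it sitting over the set $\mathsf{s}[O']\setminus\mathsf{s}[O]$. The only real point to get right is that this candidate set is \emph{clopen} in $G^0$, which is what makes the restriction both open and compact. First I would record the key topological fact that, for any $B\in\mathcal{B}^\circ_\mathsf{c}(G)$, the image $\mathsf{s}[B]$ is clopen in $G^0$. Indeed, since $G$ is \'etale the source map $\mathsf{s}$ is a continuous open local homeomorphism, so $\mathsf{s}[B]$ is compact (continuous image of the compact $B$) and open (open-map image of the open $B$); as $G^0$ is Hausdorff (\autoref{IndecomposableSemigroupoid}), compact subsets are closed, so $\mathsf{s}[B]$ is clopen. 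Applying this to $O$ and $O'$, the set $U=\mathsf{s}[O']\setminus\mathsf{s}[O]=\mathsf{s}[O']\cap(G^0\setminus\mathsf{s}[O])$ is an intersection of two clopen subsets of $G^0$, hence itself clopen.

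Next I would set $O''=O'\cap\mathsf{s}^{-1}(U)$ and verify the required properties. It is a bisection, being a subset of the bisection $O'$. It is open, being the intersection of the open set $O'$ with $\mathsf{s}^{-1}(U)$, which is open since $\mathsf{s}$ is continuous and $U$ is open. It is compact: since $U$ is closed, $\mathsf{s}^{-1}(U)$ is closed, so $O''$ is a closed subset of the compact set $O'$. Thus $O''\in\mathcal{B}^\circ_\mathsf{c}(G)$ and $O''\subseteq O'$. Finally, using the standard image/preimage identity together with $U\subseteq\mathsf{s}[O']$, I would compute $\mathsf{s}[O'']=\mathsf{s}[O'\cap\mathsf{s}^{-1}(U)]=\mathsf{s}[O']\cap U=U=\mathsf{s}[O']\setminus\mathsf{s}[O]$, which is exactly the desired source condition.

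For the parenthetical range version, I would simply invoke symmetry under inversion: applying the source statement to the compact open bisections $O^{-1}$ and $(O')^{-1}$ produces $P\in\mathcal{B}^\circ_\mathsf{c}(G)$ with $P\subseteq(O')^{-1}$ and $\mathsf{s}[P]=\mathsf{s}[(O')^{-1}]\setminus\mathsf{s}[O^{-1}]=\mathsf{r}[O']\setminus\mathsf{r}[O]$; then $O''=P^{-1}$ lies in $\mathcal{B}^\circ_\mathsf{c}(G)$, satisfies $O''\subseteq O'$, and has $\mathsf{r}[O'']=\mathsf{s}[P]=\mathsf{r}[O']\setminus\mathsf{r}[O]$. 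This justifies the ``equivalently'' in the statement without any further work.

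The argument is essentially routine, and I do not expect a serious obstacle; the one step demanding attention is the compactness of $O''$, for which the Hausdorffness of $G^0$ is indispensable, as it is precisely what upgrades the compact open set $\mathsf{s}[O]$ to a clopen set and hence makes $U$ (and therefore $\mathsf{s}^{-1}(U)\cap O'$) closed. Without that hypothesis $\mathsf{s}[O]$ need only be compact open, and the subtraction $\mathsf{s}[O']\setminus\mathsf{s}[O]$ could fail to be closed, so compactness of $O''$ would break down.
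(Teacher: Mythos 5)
Your proof is correct and takes essentially the same approach as the paper: the paper sets $O''=O'(\mathsf{s}[O']\setminus\mathsf{s}[O])$, which is exactly your set $O'\cap\mathsf{s}^{-1}(U)$, and likewise uses Hausdorffness of $G^0$ to conclude that $\mathsf{s}[O']\setminus\mathsf{s}[O]$ is compact open. Your explicit verifications (openness, compactness as a closed subset of $O'$, and the inversion argument for the range version) merely spell out details the paper leaves implicit.
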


\begin{proof}
If $G$ is \'etale and $O,O'\in\mathcal{B}^\circ_\mathsf{c}(G)$ then  $\mathsf{s}[O'],\mathsf{s}[O]\in\mathcal{O}_\mathsf{c}(G^0)$.  As $G^0$ is Hausdorff, $\mathsf{s}[O']\setminus\mathsf{s}[O]\in\mathcal{O}_\mathsf{c}(G^0)$ too so we can take $O''=O'(\mathsf{s}[O']\setminus\mathsf{s}[O])\in\mathcal{B}^\circ_\mathsf{c}(G)$.
\end{proof}

\begin{prp}
Let $G$ be an ample groupoid.  If $S$ is compact-bumpy and
\begin{equation}\label{DomainProduct}
\mathrm{dom}(ab)\subseteq\mathrm{dom}(a)\mathrm{dom}(b),
\end{equation}
for all $a,b\in N$, then
\begin{equation}\label{RZC}
C^\mathsf{R}_Z\subseteq S\qquad\Rightarrow\qquad N^\mathsf{R}_Z\subseteq S.
\end{equation}
\end{prp}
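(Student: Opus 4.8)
The plan is to show that every $a\in N^\mathsf{R}_Z$ has $\mathrm{dom}(a)\in\mathcal{B}(G)$, whence $a\in S$. Suppose not: then $\mathrm{dom}(a)$ is an isosection that fails to be a bisection, so there are distinct $g,h\in\mathrm{dom}(a)$ with $\mathsf{s}(g)=\mathsf{s}(h)$ and hence, as $\mathrm{dom}(a)$ is an isosection, $\mathsf{r}(g)=\mathsf{r}(h)$. Fixing a partner $a'$ witnessing $a\in N^\mathsf{R}_Z$, the idea is to transport $a$ into the isotropy and then invoke $C^\mathsf{R}_Z\subseteq S$.

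First I would manufacture the transporting element. Since $G$ is \'etale, $\mathrm{dom}(a)^{-1}$ is open and $\mathsf{r}$ is a local homeomorphism on it, so $g^{-1}$ has an open bisection neighbourhood inside $\mathrm{dom}(a)^{-1}$; as $G$ is ample I may shrink this to a compact open bisection $B$ with $g^{-1}\in B\subseteq\mathrm{dom}(a)^{-1}$. Applying \ref{CompactUrysohn} to $B\subseteq B$ yields $r\in S$ with $\mathrm{dom}(r)\subseteq B$ and $B\subseteq[Y^\times]r$, forcing $\mathrm{dom}(r)=B$ and $\mathrm{ran}(r)\subseteq Y^\times$; thus $r\in R$, with partner $r'$ (by \autoref{RZS}) satisfying $r(k)^{-1}=r'(k^{-1})$ for $k\in B$. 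Because $B\subseteq\mathrm{dom}(a)^{-1}$ realises the source-to-range bijection $\theta$ of the isosection $\mathrm{dom}(a)$, the product $c:=ar$ — a genuine function product, since $\mathrm{dom}(r)$ is a bisection — is supported inside $\mathrm{dom}(a)B\subseteq G^{\mathrm{iso}}$, so $c\in C$. Crucially, $\mathrm{dom}(c)$ contains both $gg^{-1}=\mathsf{r}(g)$ and $hg^{-1}$ (with values $a(g)r(g^{-1})$ and $a(h)r(g^{-1})$, both defined since $r(g^{-1})\in Y^\times$); these are distinct with a common source and range, so $\mathrm{dom}(c)$ is \emph{not} a bisection.

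The heart of the argument is to promote $c$ to an element of $C^\mathsf{R}_Z$, for then $C^\mathsf{R}_Z\subseteq S$ would make $\mathrm{dom}(c)$ a bisection, contradicting the above. Taking the candidate partner $c':=r'a'$, the four regularity relations hold purely formally: the key identity is that right-restricting the isosection-function $a$ by the idempotent $rr'\in Z$ (supported on $\mathsf{r}[B]$) coincides with left-restricting by the idempotent in $Z$ supported on $\theta(\mathsf{r}[B])$ — valid precisely because $\mathrm{dom}(a)$ is an isosection — which together with $aa',a'a\in Z$ and $rr',r'r\in Z$ gives $cc',c'c\in Z$ and $cc'c=c$, $c'cc'=c'$ by associativity, using only the bisection-side products. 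The delicate point, which I expect to be the \textbf{main obstacle}, is that the partner must be taken \emph{in} $C$, i.e.\ supported on the isotropy. Unlike in \autoref{RZS}, where $S$-elements are bisections and the regularity relations are read off pointwise, here $a$ and $a'$ are isosections whose product is controlled only abstractly through \eqref{DomainProduct}; the conditions $aa',a'a\in Z$ then become cancellation conditions in the finite isotropy group rings over each unit, forcing the ``isotropy block'' of $a$ to be invertible \emph{without} forcing $\mathrm{ran}(a)\subseteq Y^\times$ sheet by sheet. The remaining work is to use this block-invertibility — together with indecomposability of $Y$ to pin the central idempotents $cc',c'c$ to units — to replace $a'$ by a partner supported on $\mathrm{dom}(a)^{-1}$, equivalently to exhibit a regular inverse of $c$ supported on $G^{\mathrm{iso}}$. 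Once $c\in C^\mathsf{R}_Z$ is secured, the contradiction closes the argument and yields $N^\mathsf{R}_Z\subseteq S$.
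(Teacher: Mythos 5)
Your overall strategy is genuinely different from the paper's: you localise (transport $a$ into the isotropy only near the offending pair $g,h$ and argue by contradiction), whereas the paper globalises (it shows $\mathsf{s}[\mathrm{dom}(a)]=[1]a'a$ is compact, uses ampleness and \autoref{Splitting} to build a single compact open bisection $O\subseteq\mathrm{dom}(a)$ with $\mathsf{s}[O]=\mathsf{s}[\mathrm{dom}(a)]$, and then concludes $ab'\in C^\mathsf{R}_Z\subseteq S$ and $a=(ab')b\in S$ directly, no contradiction needed). However, as written your proposal has a genuine gap, and it is exactly the step you defer as ``remaining work'': you never prove $c=ar\in C^\mathsf{R}_Z$, because you never show that your candidate partner $c'=r'a'$ (or any replacement) lies in $C$. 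Moreover, the tools you propose for filling this \textendash\, cancellation in ``finite isotropy group rings'' and ``block-invertibility'' \textendash\, are not available here: the isotropy groups need not be finite, $A$ carries no additive structure, and $Y$ is only an indecomposable semigroupoid, so there are no group rings to argue in. A second, unacknowledged gap is the existence in $Z$ of the idempotent supported on $\theta(\mathsf{r}[B])$ that your key identity $a(rr')=e_\theta a$ requires; this is not free, since $Z$-elements cannot be conjured on arbitrary subsets of $G^0$.

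Both gaps are in fact fillable, the first more easily than you fear. From $a'=(a'a)a'$, \eqref{DomainProduct} and $a'a\in Z$ one gets $\mathsf{r}[\mathrm{dom}(a')]\subseteq\mathrm{dom}(a'a)\subseteq\mathrm{dom}(a')\mathrm{dom}(a)\cap G^0=\mathsf{r}[\mathrm{dom}(a')\cap\mathrm{dom}(a)^{-1}]$, so every $k'\in\mathrm{dom}(a')$ shares its range, and hence (as $\mathrm{dom}(a')$ is an isosection) its source, with some $j\in\mathrm{dom}(a')\cap\mathrm{dom}(a)^{-1}$; writing $k'=j(j^{-1}k')$ with $j^{-1}k'\in G^\mathrm{iso}$ and using the isosection property of $\mathrm{dom}(a)$ then gives $\mathrm{dom}(a)\mathrm{dom}(a')\subseteq G^\mathrm{iso}$, so $\mathrm{dom}(r'a')\subseteq B^{-1}\mathrm{dom}(a')\subseteq G^\mathrm{iso}$ and $c'=r'a'\in C$ automatically (this is the same fact the paper uses silently for its claim $ba'\in C$). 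For the second gap, $\theta$ is a homeomorphism between $\mathsf{s}[\mathrm{dom}(a)]$ and $\mathsf{r}[\mathrm{dom}(a)]$ (locally it is $\mathsf{r}\circ(\mathsf{s}|_V)^{-1}$ for open bisections $V\subseteq\mathrm{dom}(a)$), so $\theta(\mathsf{r}[B])$ is compact open in $G^0$, and \ref{CompactUrysohn} followed by \ref{CompactInvolutive} produces the required $1$-valued $e_\theta\in Z$. With these two repairs your verification $cc'=e_\theta(aa')\in Z$, $c'c=r'r\in Z$, $cc'c=c$, $c'cc'=c'$ goes through and the contradiction closes. Note finally what the paper's global construction buys: since $\mathsf{s}[O]$ is all of $\mathsf{s}[\mathrm{dom}(a)]$, the middle idempotent acts as a genuine identity, $ab'b=a$, so $ab'ba'=aa'\in Z$ needs no restriction trick at all \textendash\, the extra work your local approach forces ($e_\theta$ and its existence) is precisely what the paper's covering argument via \autoref{Splitting} was designed to avoid.
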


\begin{proof}
Say $C^\mathsf{R}_Z\subseteq S$ and take any $a\in N^\mathsf{R}_Z$, so we have $a'\in N$ with $aa'a=a$, $a'aa'=a'$ and $aa',a'a\in Z$.  Thus $\mathsf{s}[\mathrm{dom}(a)]\subseteq[1]a'a$, as $a=aa'a$ and $Y$ is indecomposable, and conversely $[1]a'a\subseteq\mathrm{dom}(a'a)\subseteq\mathsf{s}[\mathrm{dom}(a)]$, by \eqref{DomainProduct}.  Thus $\mathsf{s}[\mathrm{dom}(a)]=[1]a'a$ is compact, by \ref{1Proper}.  As $G$ is ample, each $g\in\mathrm{dom}(a)$ is contained in some $O\in\mathcal{B}^\circ_\mathsf{c}(G)$, which is itself contained in $\mathrm{dom}(a)$.  As $\mathsf{s}[\mathrm{dom}(a)]$ is compact, we can have a finite family of such $O$ whose sources cover $\mathsf{s}[\mathrm{dom}(a)]$.  By \autoref{Splitting}, we can shrink these if necessary to ensure the sources of these bisections are disjoint but still cover $\mathsf{s}[\mathrm{dom}(a)]$.  Taking the union of these we thus obtain $O\in\mathcal{B}^\circ_\mathsf{c}(G)$ such that
\[\mathrm{dom}(a)O^{-1}\cup O^{-1}\mathrm{dom}(a)\subseteq G^\mathrm{iso}\quad\text{and}\quad\mathrm{dom}(a)O^{-1}O=\mathrm{dom}(a)=OO^{-1}\mathrm{dom}(a).\]

Now \ref{CompactUrysohn} yields $b\in S$ with $\mathrm{dom}(b)=O$ and $\mathrm{ran}(b)\subseteq Y^\times$.  Then \ref{CompactInvolutive} yields $b'\in S$ with $b(g)^{-1}=b'(g^{-1})$, for all $g\in O$.  It follows that $ab',ba'\in C$, $ab'ba'=aa'\in Z$, $a'a=ba'ab'\in Z$, $ab'ba'ab'=aa'ab'=ab'$ and $ba'ab'ba'=ba'aa'=ba'$, i.e. $ab'\in C^\mathsf{R}_Z\subseteq S$ so $a=ab'b\in S$, showing $N^\mathsf{R}_Z\subseteq S$.
\end{proof}

\begin{cor}
Let $G$ be an ample groupoid.  If $S$ is compact-bumpy, $D$ is exhaustive and \eqref{DomainProduct} holds, for all $a,b\in N$, then
\begin{equation}\label{R}
C^\mathsf{R}_Z\subseteq S\qquad\Rightarrow\quad R=\mathsf{N}(\mathsf{Z}(D)^\mathsf{R})^\mathsf{R}_{\mathsf{Z}(D)}.
\end{equation}
\end{cor}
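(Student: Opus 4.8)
The plan is to reduce the identity to three results already in hand: \autoref{RZS}, giving $R=S^\mathsf{R}_Z$; the implication \eqref{RZC}, giving $N^\mathsf{R}_Z\subseteq S$ under the present hypotheses (since $C^\mathsf{R}_Z\subseteq S$ is assumed); and the normaliser argument of \autoref{MN}. Since $D$ is exhaustive, \autoref{ZE} lets me replace $\mathsf{Z}(D)$ by $Z$, so the goal becomes $R=\mathsf{N}(Z^\mathsf{R})^\mathsf{R}_Z$. The guiding observation is that $Z^\mathsf{R}$ is exactly the set of diagonal functions with central \emph{invertible} values: for $z\in Z$ with witness $z'\in Z$, each product $z(g)z'(g)$ is a central idempotent of $Y$ and hence equals $1$ by indecomposability, forcing $\mathrm{dom}(z)=\mathrm{dom}(z')$, $\mathrm{ran}(z)\subseteq Y^\times$ and $z'(g)=z(g)^{-1}$. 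Invertibility of these values is precisely what guarantees the relevant products stay everywhere defined in the semigroupoid setting, and I would keep this totality in view throughout.

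For $\mathsf{N}(Z^\mathsf{R})^\mathsf{R}_Z\subseteq R$, the first step is $\mathsf{N}(Z^\mathsf{R})\subseteq N$, obtained by rerunning the argument of \autoref{MN} with $Z^\mathsf{R}$ in place of $Z$. That argument needs $\mathrm{dom}[Z^\mathsf{R}]$ to separate the points of $G^0$, which I would supply as follows: given distinct units, ampleness together with Hausdorffness of $G^0$ yields a compact open $V\subseteq G^0$ (hence $V\in\mathcal{B}^\circ_\mathsf{c}(G)$) separating them, and applying \ref{CompactUrysohn} to $V\subseteq V$ followed by \ref{CompactInvolutive} produces, exactly as in the proof of \autoref{RZS}, the characteristic function $\chi_V=aa'\in Z$; this lies in $Z^\mathsf{R}$ since $\chi_V$ is its own regular witness. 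Granting $\mathsf{N}(Z^\mathsf{R})\subseteq N$, any $a\in\mathsf{N}(Z^\mathsf{R})^\mathsf{R}_Z$ has a witness $a'\in\mathsf{N}(Z^\mathsf{R})\subseteq N$, so the defining relations put $a$ (and, by symmetry, $a'$) in $N^\mathsf{R}_Z$; then \eqref{RZC} gives $a,a'\in S$, whence $a\in S^\mathsf{R}_Z=R$ by \autoref{RZS}.

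For the reverse inclusion $R\subseteq\mathsf{N}(Z^\mathsf{R})^\mathsf{R}_Z$, take $a\in R=S^\mathsf{R}_Z$ with witness $a'\in S$; applying \autoref{RZS} with the roles of $a$ and $a'$ interchanged shows $a'\in R$ as well, and \ref{CompactInvolutive} arranges $a'(g^{-1})=a(g)^{-1}$ on $\mathrm{dom}(a)$ with $aa',a'a$ the characteristic functions of $\mathsf{r}[\mathrm{dom}(a)]$ and $\mathsf{s}[\mathrm{dom}(a)]$. For $p\in Z^\mathsf{R}$, I would compute the conjugates pointwise: using centrality and invertibility of the values of $p$ one finds $apa'(\mathsf{r}(g))=p(\mathsf{s}(g))$ and $a'pa(\mathsf{s}(g))=p(\mathsf{r}(g))$, so both $apa'$ and $a'pa$ again lie in $Z^\mathsf{R}$ (with pointwise inverses built from the witness of $p$), and the identities $ap=(apa')a$ and $pa=a(a'pa)$ yield $aZ^\mathsf{R}=Z^\mathsf{R}a$. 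Thus $a\in\mathsf{N}(Z^\mathsf{R})$, and the same computation for $a'$ gives $a'\in\mathsf{N}(Z^\mathsf{R})$, so $a\in\mathsf{N}(Z^\mathsf{R})^\mathsf{R}_Z$.

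The step I expect to be most delicate is keeping every product total while carrying out the two normaliser computations: in the semigroupoid setting $\mathrm{dom}(ab)$ can be a proper subset of $\mathrm{dom}(a)\mathrm{dom}(b)$, and it is the invertibility of the values of $Z^\mathsf{R}$ that makes each product above defined, so I would arrange the bookkeeping so this is manifest at each stage. The secondary subtlety is the separation property of $\mathrm{dom}[Z^\mathsf{R}]$, which is not automatic and depends on extracting the genuine projections $\chi_V\in Z^\mathsf{R}$ from compact-bumpiness.
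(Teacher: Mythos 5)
Your proposal is correct and follows essentially the same route as the paper's own proof: reduce $\mathsf{Z}(D)$ to $Z$ via \autoref{ZE}, obtain the inclusion $\mathsf{N}(Z^\mathsf{R})^\mathsf{R}_Z\subseteq R$ by rerunning the argument of \autoref{MN} and then applying \eqref{RZC} and \autoref{RZS}, and obtain $R\subseteq\mathsf{N}(Z^\mathsf{R})^\mathsf{R}_Z$ by the conjugation computation with the witness supplied by \ref{CompactInvolutive}; indeed you make explicit two points the paper glosses over, namely the $T_0$ property of $\mathrm{dom}[Z^\mathsf{R}]$ (via characteristic functions of compact open subsets of $G^0$) and the fact that invertibility of the values of elements of $Z^\mathsf{R}$ keeps all products defined in the semigroupoid setting. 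The only blemish is the claim that $Z^\mathsf{R}$ is ``exactly'' the diagonal functions with central invertible values \textendash\, membership also forces $\mathrm{dom}(z)=[1]zz'$ to be compact by \ref{1Proper}, and the converse requires an inverse function lying in $A$ \textendash\, but since you always exhibit explicit regular witnesses rather than invoking that false converse, the argument is unaffected.
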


\begin{proof}
As $D$ is exhaustive, we immediately see that $Z=\mathsf{Z}(D)$, just like in \autoref{ZE}.  As in \autoref{RZS}, we see that
\[\mathsf{Z}(D)^\mathsf{R}=A\cap{}_{\mathsf{Z}(Y)^\times}\mathcal{O}_\mathsf{c}(G^0)=\{z\in Z:\mathrm{dom}(z)\text{ is compact and }\mathrm{ran}(z)\subseteq\mathsf{Z}(Y)^\times\}.\]
As in \autoref{MN}, $\mathsf{N}(\mathsf{Z}(D)^\mathsf{R})\subseteq N$ so $\mathsf{N}(\mathsf{Z}(D)^\mathsf{R})^\mathsf{R}_{\mathsf{Z}(D)}\subseteq N^\mathsf{R}_Z$.  By \eqref{RZC}, $N^\mathsf{R}_Z\subseteq S$ and hence $N^\mathsf{R}_Z\subseteq S^\mathsf{R}_Z=R$, by \autoref{RZS}, showing $\mathsf{N}(\mathsf{Z}(D)^\mathsf{R})^\mathsf{R}_{\mathsf{Z}(D)}\subseteq R$.  On the other hand, if $a\in R$ then $B=\mathrm{dom}(a)^{-1}$ satisfies \eqref{Mconditions} so $a\in\mathsf{N}(\mathsf{Z}(D)^\mathsf{R})$, as in \autoref{MN}, and hence $a\in\mathsf{N}(\mathsf{Z}(D)^\mathsf{R})^\mathsf{R}_{\mathsf{Z}(D)}$, by \ref{CompactInvolutive}.
\end{proof}

\begin{cor}\label{GroupoidIsomorphism3}
Again assume $A'\subseteq{}_{Y'}\mathcal{O}(G')$ satisfies the same assumptions as $A$ in \autoref{AassAgain}, from which we likewise define $Z'$, $D'$, $S'$, $C'$, $N'$ and $R'$.  If
\begin{enumerate}
\item $\mathrm{dom}(ab)\subseteq\mathrm{dom}(a)\mathrm{dom}(b)$, whenever $a,b\in N$ or $a,b\in N'$,
\item $D$ and $D'$ are exhaustive,
\item $S$ and $S'$ are compact-bumpy,
\item $G$ and $G'$ are ample,
\item $C^\mathsf{R}_Z\subseteq S$ and $C'^{\mathsf{R}}_{Z'}\subseteq S'$, and
\item $A$ and $A'$ are diagonally isomorphic
\end{enumerate}
then $G$ and $G'$ are isomorphic \'etale groupoids.
\end{cor}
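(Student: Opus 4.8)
The plan is to reduce \autoref{GroupoidIsomorphism3} to \autoref{GroupoidIsomorphism}, applied not to $A,A'$ themselves but to the subsemigroups $R,R'$ of invertible-valued, compact-domain elements. The whole point is that, under hypotheses (1)--(5), the preceding corollary expresses $R$ through the purely multiplicative formula \eqref{R}, namely $R=\mathsf{N}(\mathsf{Z}(D)^\mathsf{R})^\mathsf{R}_{\mathsf{Z}(D)}$ (and likewise $R'=\mathsf{N}(\mathsf{Z}(D')^\mathsf{R})^\mathsf{R}_{\mathsf{Z}(D')}$, applying (1)--(5) on the $G'$ side). First I would therefore observe that a diagonal-preserving \emph{semigroup} isomorphism already transports $R$ to $R'$.

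Concretely, let $\phi\colon A\to A'$ be the diagonal-preserving isomorphism furnished by (6), so $\phi[D]=D'$. Each ingredient of \eqref{R} is defined by equations involving only the product, hence is preserved by $\phi$: the centre gives $\phi[\mathsf{Z}(D)]=\mathsf{Z}(D')$, the regular elements give $\phi[\mathsf{Z}(D)^\mathsf{R}]=\mathsf{Z}(D')^\mathsf{R}$, and the normaliser and $Z$-regular constructions are likewise product-definable; chaining these yields $\phi[R]=R'$. Since $R\subseteq A$ we have $\mathsf{D}(R)=R\cap D$, whence $\phi[\mathsf{D}(R)]=\phi[R]\cap\phi[D]=R'\cap D'=\mathsf{D}(R')$, so $\phi$ restricts to a diagonal-preserving isomorphism of $R$ onto $R'$.

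Next I would check that $R$ (and symmetrically $R'$) is a bumpy semigroup satisfying the hypotheses of \autoref{GroupoidIsomorphism}. By \autoref{RZS}, $R=S^\mathsf{R}_Z$, so its elements have compact open bisection domains on which they take values in the group $Y^\times$; as products of invertibles are always defined, for $a,b\in R$ the product \eqref{FunctionProduct2} is defined on all of $\mathrm{dom}(a)\mathrm{dom}(b)$ with invertible values, making $R$ a subsemigroup of $\,{}_{Y^\times}\mathcal{B}^\circ(G)$, where $Y^\times$ is a group and hence satisfies \autoref{Gass}. Bumpiness of $R$ then follows from compact-bumpiness of $S$ together with ampleness of $G$: \ref{1Proper} is inherited from $S\supseteq R$; for \ref{Urysohn}, the empty function lies trivially in $R$, and given $g\in O\in\mathcal{O}(G)$ ampleness supplies a compact open bisection $B$ with $g\in B\subseteq O$, so that \ref{CompactUrysohn} applied to $B\subseteq B$ returns $a\in S$ with $\mathrm{dom}(a)=B$ and $\mathrm{ran}(a)\subseteq Y^\times$, i.e. $a\in R$ with $g\in B=\mathrm{int}([Y^\times]a)$; and for \ref{Involutive}, the witness $a'$ to $a\in S^\mathsf{R}_Z$ itself lies in $R$ (the defining equations being symmetric in $a,a'$) and realises the required inverses on $\mathrm{dom}(a)^{-1}$. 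Applying \autoref{GroupoidIsomorphism} to $R,R'$ and $\phi|_R$ then delivers the \'etale groupoid isomorphism $G\cong G'$.

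The main obstacle is squarely the first step, $\phi[R]=R'$, and it has already been discharged by the earlier results: it is precisely hypotheses (1), (4) and (5) (the domain-product inclusion, ampleness, and $C^\mathsf{R}_Z\subseteq S$) that make the product-only description \eqref{R} of $R$ available, so that a bare semigroup isomorphism --- rather than a full algebra isomorphism --- can recognise and transport $R$. Once this is in hand the remainder is routine, since $R$ is visibly bumpy and group-valued and \autoref{GroupoidIsomorphism} finishes the argument.
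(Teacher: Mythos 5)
Your proposal is correct and follows essentially the same route as the paper's proof: restrict the diagonal-preserving isomorphism to $R=\mathsf{N}(\mathsf{Z}(D)^\mathsf{R})^\mathsf{R}_{\mathsf{Z}(D)}$ using \eqref{R}, note that $R$ and $R'$ are bumpy with values in the genuine semigroups $Y^\times$ and $Y'^\times$, and invoke \autoref{GroupoidIsomorphism}. The only difference is that you verify bumpiness of $R$ explicitly (via \autoref{RZS}, ampleness and \ref{CompactUrysohn}/\ref{CompactInvolutive}) where the paper simply asserts that $R$ inherits compact-bumpiness from $S$.
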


\begin{proof}
If $\phi:A\rightarrow A'$ is a diagonal-preserving semigroup isomorphism then so its restriction to $R=\mathsf{N}(\mathsf{Z}(D)^\mathsf{R})^\mathsf{R}_{\mathsf{Z}(D)}$ (see \eqref{R}), which is thus diagonally isomorphic to $R'=\mathsf{N}(\mathsf{Z}(D')^\mathsf{R})^\mathsf{R}_{\mathsf{Z}(D')}$.  As $G$ is ample and $S$ is compact-bumpy, so is $R$.  Likewise, $R'$ is compact-bumpy so $G$ and $G'$ are isomorphic \'etale groupoids, by \autoref{GroupoidIsomorphism} (noting that the elements of $R$ and $R'$ all have ranges contained in $Y^\times$ and $Y'^\times$ respectively, which are bone fide semigroups, not just a semigroupoids).
\end{proof}

We can apply \autoref{GroupoidIsomorphism3} to subsemigroups  $A$ and $A'$ of Steinberg algebras which  consist of locally constant compactly supported functions on ample groupoids $G$ and $G'$ taking values in unital indecomposable rings, similar to the set-up in \cite{Steinberg2019}.  If the groupoids are Hausdorff, then \autoref{GroupoidIsomorphism3} applies when $A$ and $A'$ are the Steinberg algebras themselves.
 \autoref{GroupoidIsomorphism3} generalises \cite[Theorem 5.6]{Steinberg2019} in several respects, namely
\begin{enumerate}
\item we only require a semigroup isomorphism, not a ring isomorphism;
\item the groupoids need not be Hausdorff (as long as their unit spaces are);
\item the coefficient rings can be non-commutative (and non-identical); and
\item Steinberg's ``local bisection hypothesis'' on the algebra is reduced to a regularity condition on the isotropy subalgebra (specifically $C^\mathsf{R}_Z\subseteq S$).
\end{enumerate}
Regarding the last point, Steinberg does obtain a similar reduction in \cite[Proposition 4.10]{Steinberg2019}.  Indeed, $C^\mathsf{R}_Z$ consists of those elements of $C$ which normalise the diagonal in Steinberg's sense because the elements of $C$ and $Z$ commute and hence, for $c,c'\in C$, $cc'\in Z$ is equivalent to $(cc'Z=)cZc'\subseteq Z$.  In \cite[Theorem 5.5]{Steinberg2019}, Steinberg also shows this only needs to apply to one of the algebras, although the proof makes crucial use of the additive structure.  If a purely multiplicative analog of \cite[Theorem 5.5]{Steinberg2019} could be proved then we would likewise only need either $C^\mathsf{R}_Z\subseteq S$ or $C'^{\mathsf{R}}_{Z'}\subseteq S'$ in \autoref{GroupoidIsomorphism3}.  We will also extend \autoref{GroupoidIsomorphism3} to the graded context in \autoref{GradedGroupoidIsomorphism3} below, just like in \cite[Theorem 5.6]{Steinberg2019}.

Steinberg also helps us see that our hypothesis  $C^\mathsf{R}_Z\subseteq S$
includes some checkable groupoid conditions as follows.
For  $x\in G^0$, consider the ``interior isotropy group'' $H_x$ given by
\begin{equation}
\label{eq:intiso}
H_x=\{g\in\mathrm{int}(G^\mathrm{iso}):\mathsf{s}(g)=x=\mathsf{r}(g)\}.
\end{equation}
Define
\[C_x=\{c|_{H_x}:c\in C\} \quad \text{ and }\quad S_x=\{a|_{G_x}:a\in S\}.\]
As noted in \cite[\S2.1]{Steinberg2019},
when $C_x$ is the convolution algebra of finitely supported functions on $H_x$, then
\begin{align}
&G \text{ is effective} \notag\\
\Rightarrow\quad&\text{for each } x \in G^{0}, H_x\text{ is free, or torsion-free and abelian}\notag\\
\Rightarrow\quad&\text{for each } x \in G^{0}, H_x\text{ is left or right orderable}\notag\\
\Rightarrow\quad&\text{for each } x \in G^{0}, H_x \text{ has the unique product property}\notag\\
\Rightarrow\quad&\text{for each } x \in G^{0}, C_x  \text{ has no non-trivial units}\notag\\
\Rightarrow\quad&\text{for each } x \in G^{0}, C_x^\times \subseteq S_x\notag\\
\Rightarrow\quad&  C^\mathsf{R}_Z\subseteq S. \label{eq:crzs}
\end{align}
To summarize, for example, the following is a consequence of \autoref{GroupoidIsomorphism3}

\begin{cor}
\label{cor:steinberg}
Let $G$ and $G'$ be ample Hausdorff groupoids and let $K$ and $K'$ be indecomposable unital rings.
Suppose that   $G$ and $G'$ each satisfy any one of the conditions that imply \eqref{eq:crzs} above.
If the Steinberg algebras  $A_K(G)$ and $A_{K'}(G')$ are diagonally isomorphic as semigroups,
then $G$ and $G'$ are isomorphic groupoids.
\end{cor}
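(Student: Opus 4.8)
The plan is to exhibit the Steinberg algebras as instances of the semigroups $A,A'$ of \autoref{AassAgain} and then verify the six hypotheses of \autoref{GroupoidIsomorphism3} one by one. First I would set $Y=K\setminus\{0\}$ and, as in \autoref{Y^G}, identify each $f\in A_K(G)$ with its restriction $f|_{\mathrm{supp}(f)}$ to its support, so that $A:=A_K(G)$ becomes a semigroup of partial $Y$-valued functions on $G$ under convolution. Since $G$ is ample and Hausdorff, every element of $A_K(G)$ is locally constant with compact open support, so $A\subseteq{}_Y\mathcal{O}(G)$ and convolution agrees with the partial product \eqref{FunctionProduct2} on pairs with a bisection domain; thus \autoref{AassAgain} holds. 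The standing \autoref{IndecomposableSemigroupoid} also holds, since $K$ being an indecomposable unital ring is precisely the statement that $1$ is the only nonzero central idempotent, i.e.\ that $Y=K\setminus\{0\}$ is an indecomposable semigroupoid. I then define $Z,D,S,C,N,R$ from $A$ as prescribed, noting that $D$ is exactly the diagonal $A_K(G^0)$, so that a diagonal-preserving semigroup isomorphism of the Steinberg algebras is precisely hypothesis (6); likewise for the primed data.

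Most of the remaining hypotheses are quick. Ampleness (4) is assumed outright. For exhaustiveness (2), given $x\in G^0$ and $y\in Y$ I pick a compact open neighbourhood $U\subseteq G^0$ of $x$ (available as $G$ is ample) and take $d=y\chi_U\in D$, which has $d(x)=y$. The domain-product containment \eqref{DomainProduct} of hypothesis (1) is automatic for convolution: if $\mathrm{dom}(ab)\ni f$ then some term $a(g)b(h)$ of the convolution sum is nonzero with $g\in\mathrm{dom}(a)$, $h\in\mathrm{dom}(b)$ and $f=gh$, whence $f\in\mathrm{dom}(a)\mathrm{dom}(b)$. Hypothesis (5), $C^\mathsf{R}_Z\subseteq S$ and $C'^{\mathsf{R}}_{Z'}\subseteq S'$, is delivered directly by the assumption that $G$ and $G'$ each satisfy one of the groupoid conditions in the implication chain \eqref{eq:crzs}.

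The substantive step, which I expect to be the main obstacle, is verifying (3): that $S$ and $S'$ are compact-bumpy. Here $S$ consists of the Steinberg functions supported on compact open bisections, and crucially these may take non-invertible values in $Y$. I would check \ref{1Proper} directly, since $[1]a$ is clopen in the compact support and hence compact, and \ref{CompactUrysohn} by covering a compact bisection $C\subseteq O$ by compact open bisections inside $O$, taking a finite union $U\subseteq O$ (a bisection, being a subset of the bisection $O$), and setting $a=1\cdot\chi_U$ so that $C\subseteq U=[Y^\times]a$. The delicate point is \ref{CompactInvolutive}: given $a\in S$ and compact $C\subseteq[Y^\times]a$, I must produce $b\in S$ with $b(g^{-1})=a(g)^{-1}$ on $C$. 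Since $a$ is locally constant it takes finitely many invertible values $k_1,\dots,k_n\in K^\times$ on $C$; using that $G^0$ is Hausdorff and $G$ is ample I can separate the preimages into pairwise disjoint compact open bisections $V_i\subseteq\mathrm{dom}(a)$ on which $a\equiv k_i$, and then set $b=\sum_i k_i^{-1}\chi_{V_i^{-1}}$. As $\bigcup_i V_i\subseteq\mathrm{dom}(a)$ is a bisection, so is its inverse, giving $b\in S$ with the required values.

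With \ref{1Proper}, \ref{CompactUrysohn} and \ref{CompactInvolutive} established, the remaining bumpiness conditions \ref{Urysohn} and \ref{Involutive} follow from the compact versions via \autoref{remarks compact bumpy} (noting $G$ is locally compact and \'etale because it is ample), so $S$, and symmetrically $S'$, is compact-bumpy. All six hypotheses of \autoref{GroupoidIsomorphism3} are then in force, and applying it yields that $G$ and $G'$ are isomorphic \'etale groupoids, which is the claim.
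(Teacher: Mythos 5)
Your proposal is correct and takes essentially the same route as the paper: both proofs simply instantiate \autoref{GroupoidIsomorphism3}, with conditions (4)--(6) holding by hypothesis. The paper's proof merely asserts that conditions (1)--(3) are ``automatic'' for Steinberg algebras on ample Hausdorff groupoids, whereas you verify them explicitly (exhaustiveness via characteristic functions, the domain containment via the convolution formula, and compact-bumpiness via local constancy), and these verifications are sound.
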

\begin{proof}  Conditions (1)--(3) of \autoref{GroupoidIsomorphism3} are automatic.  Conditions (4)--(6) are assumed so the result follows. \end{proof}

When dealing with Steinberg algebras, it even suffices to have $C_x^\times\subseteq S_x$ on some dense subset of $G^0$ (see \cite[Theorem 4.13]{Steinberg2019}) so, as with \cite[Theorem 5.6]{Steinberg2019}, \autoref{GroupoidIsomorphism3} also generalises \cite[Theorem 3.1]{CarlsenRout2018}, \cite[Theorem~6.2]{BrownClarkanHuef2017} and \cite[Theorem~3.1]{AraBosaHazratSims2017}.


\section{Gradings}

In this section we generalise our results to graded groupoids and semigroups.

\begin{dfn}
We call $\mathsf{c}:\Sigma\rightarrow\Gamma$ a \emph{grading} if $\Sigma$ and $\Gamma$ are semigroupoids and
\[\mathsf{c}(xy)=\mathsf{c}(x)\mathsf{c}(y),\quad\text{whenever $xy$ is defined}.\]
We call $H\subseteq\Sigma$ \emph{homogeneous} if $\mathsf{c}(x)=\mathsf{c}(y)$, for all $x,y\in H$.
\end{dfn}
In other words, a grading is a cocycle/homomorphism/functor.  Note that any semigroupoid $\Sigma$ is trivially graded by the one-element group $\Gamma=\{1\}$.  In this case, the following assumption reduces to \autoref{Gass}.

\begin{assss}\label{assss}\
\begin{enumerate}
\item $G$ is both a groupoid and a topological space with Hausdorff unit space $G^0$ and grading $\mathsf{c}:G\mapsto\Gamma$, where $\Gamma$ is another groupoid.
\item $Y$ is a semigroup satisfying \eqref{1Cancellative}.
\end{enumerate}
\end{assss}

Let $\mathcal{O}_\mathsf{h}(G)$ and $\mathcal{B}^\circ_\mathsf{h}(G)$ denote the homogeneous open subsets and bisections.  We define bumpy semigroups $S\subseteq{}_Y\mathcal{B}^\circ_\mathsf{h}(G)$ exactly like before in \autoref{BumpyDef}.  So the only extra requirement is that $\mathrm{dom}(a)$ is homogeneous, for all $a\in S$.  It follows that $\mathsf{c}$ yields a grading on $S\setminus\{\emptyset\}$, which we also denote by $\mathsf{c}$, i.e. for all $a\in S\setminus\{\emptyset\}$,
\[\mathsf{c}[\mathrm{dom}(a)]=\{\mathsf{c}(a)\}.\]

Recall from \autoref{Bumpy=>LCetale} that the existence of a bumpy semigroup implies that $G$ is locally compact and \'etale.  It now also follows that the grading is locally constant, i.e. continuous with respect to the discrete topology on $\Gamma$.

\begin{prp}\label{Bumpy=>gradedLCetale}
If $S\subseteq{}_Y\mathcal{B}^\circ_\mathsf{h}(G)$ is bumpy then $\mathsf{c}$ is locally constant.
\end{prp}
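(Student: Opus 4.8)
The plan is to exploit the basis of homogeneous open bisections that a bumpy semigroup automatically provides, together with the observation that ``locally constant'' is precisely the statement that every point of $G$ has an open neighbourhood on which $\mathsf{c}$ is constant (equivalently, that $\mathsf{c}$ is continuous into $\Gamma$ equipped with the discrete topology). Recall from the proof of \autoref{Bumpy=>LCetale} that $\{\mathrm{dom}(a):a\in S\}$ is a basis of open bisections for $G$; here every such domain is moreover homogeneous by the definition of $\mathcal{B}^\circ_\mathsf{h}(G)$, and this homogeneity is what will carry the whole argument.

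First I would fix an arbitrary $g\in G$ and apply \ref{Urysohn} (with $O=G$) to obtain some $a\in S$ with $g\in\mathrm{int}([Y^\times]a)$. This set is open and satisfies $\mathrm{int}([Y^\times]a)\subseteq[Y^\times]a\subseteq\mathrm{dom}(a)$, so it is an open neighbourhood of $g$ contained in the single domain $\mathrm{dom}(a)$.

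Next, since $a\in S\subseteq{}_Y\mathcal{B}^\circ_\mathsf{h}(G)$, its domain is homogeneous, which by definition means $\mathsf{c}[\mathrm{dom}(a)]=\{\mathsf{c}(a)\}$. Hence $\mathsf{c}$ takes the constant value $\mathsf{c}(a)$ on the open neighbourhood $\mathrm{int}([Y^\times]a)$ of $g$. As $g$ was arbitrary, every point of $G$ has an open neighbourhood of constancy, so $\mathsf{c}$ is locally constant, as claimed.

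There is essentially no obstacle in this argument: the only point to check is that \ref{Urysohn} delivers an open neighbourhood of $g$ sitting inside one homogeneous domain, which is immediate. The entire content is supplied by the homogeneity requirement built into $\mathcal{B}^\circ_\mathsf{h}(G)$ combined with the basis property already established in \autoref{Bumpy=>LCetale}.
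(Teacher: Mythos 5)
Your proof is correct and follows essentially the same route as the paper's: apply \ref{Urysohn} to produce $a\in S$ whose (homogeneous) domain contains an open neighbourhood of the given point $g$, then use homogeneity of $\mathrm{dom}(a)$ to conclude $\mathsf{c}$ is constant there. The only cosmetic difference is that you pass to $\mathrm{int}([Y^\times]a)$ as the neighbourhood of constancy, whereas the paper uses the open set $\mathrm{dom}(a)$ itself; both are valid since $\mathrm{dom}(a)\in\mathcal{B}^\circ_\mathsf{h}(G)$ is already open.
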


\begin{proof}
Whenever $g\in O\in\mathcal{O}(G)$, \ref{Urysohn} yields $a\in S$ defined on a neighbourhood of $g$.  As $\mathrm{dom}(a)$ is homogeneous, $\mathsf{c}[\mathrm{dom}(a)]=\{\mathsf{c}(g)\}$, showing that $\mathsf{c}$ is constant on a neighbourhood of $g$.
\end{proof}

Recall from \autoref{GroupoidRecovery} that we can recover both the topology and groupoid structure of $G$ from any bumpy semigroup.  Now the grading on $S\setminus\{\emptyset\}$ can also be used to recover the grading on $G$.

\begin{thm}\label{GradedGroupoidRecovery}
If $S\subseteq{}_Y\mathcal{B}^\circ_\mathsf{h}(G)$ is a bumpy semigroup with diagonal $D=\mathsf{D}(S)$ then
\[g\mapsto S_g=\{a\in S:g\in\mathrm{int}([Y^\times]a)\}\]
is a homeomorphism from $G$ onto $\mathcal{U}(S)$.  Moreover, for all $(g,h)\in G^2$,
\[S_{g^{-1}}=S_g^*,\qquad S_{gh}=(S_gS_h)^\prec\qquad\text{and}\qquad\mathsf{c}[S_g]=\{\mathsf{c}(g)\}.\]
\end{thm}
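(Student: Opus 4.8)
The plan is to observe that \autoref{GradedGroupoidRecovery} adds exactly one new assertion to \autoref{GroupoidRecovery}, namely the grading identity $\mathsf{c}[S_g]=\{\mathsf{c}(g)\}$. All of the purely topological and groupoid-theoretic content --- that $g\mapsto S_g$ is a homeomorphism from $G$ onto $\mathcal{U}(S)$ and that $S_{g^{-1}}=S_g^*$ and $S_{gh}=(S_gS_h)^\prec$ --- depends only on the bumpy semigroup structure of $S$, which is unaffected by the grading (homogeneity of domains is an extra constraint on the elements of $S$, not a change to the product nor to the relation $\prec$). So I would simply invoke \autoref{GroupoidRecovery} verbatim for these three claims and concentrate on the grading identity.

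For the grading identity I would argue directly. First I would check that $S_g$ is nonempty and avoids the empty function, so that $\mathsf{c}$ is genuinely defined on all of $S_g$: by \ref{Urysohn} there is some $a\in S$ with $g\in\mathrm{int}([Y^\times]a)$, whence $a\in S_g$; and $\emptyset\notin S_g$ since $g\in\mathrm{int}([Y^\times]\emptyset)=\mathrm{int}(\emptyset)=\emptyset$ is absurd. Thus every element of $S_g$ lies in $S\setminus\{\emptyset\}$, on which the grading is defined by $\mathsf{c}[\mathrm{dom}(a)]=\{\mathsf{c}(a)\}$. Then for any fixed $a\in S_g$ we have $g\in\mathrm{int}([Y^\times]a)\subseteq\mathrm{dom}(a)$, and since $a\in{}_Y\mathcal{B}^\circ_\mathsf{h}(G)$ has homogeneous domain, $\mathsf{c}$ is constant on $\mathrm{dom}(a)$ with value $\mathsf{c}(a)$; as $g\in\mathrm{dom}(a)$ this forces $\mathsf{c}(a)=\mathsf{c}(g)$. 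Since $a\in S_g$ was arbitrary, $\mathsf{c}[S_g]\subseteq\{\mathsf{c}(g)\}$, and nonemptiness of $S_g$ upgrades this to equality.

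I do not anticipate a genuine obstacle here: the grading claim is essentially immediate from the definition of homogeneity together with the containment $\mathrm{int}([Y^\times]a)\subseteq\mathrm{dom}(a)$. The only point requiring a moment's care is the bookkeeping that $S_g$ is a nonempty subset of $S\setminus\{\emptyset\}$, so that applying $\mathsf{c}$ to it makes sense and yields a singleton rather than the empty set; both facts follow from \ref{Urysohn} as above.
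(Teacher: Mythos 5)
Your proposal is correct and matches the paper's own proof: the paper likewise disposes of the homeomorphism and the identities $S_{g^{-1}}=S_g^*$, $S_{gh}=(S_gS_h)^\prec$ by noting the argument of \autoref{GroupoidRecovery} applies unchanged, and then observes that $\mathsf{c}[S_g]=\{\mathsf{c}(g)\}$ is immediate since $g\in\mathrm{dom}(a)$ forces $\mathsf{c}(a)=\mathsf{c}(g)$ by homogeneity. Your extra bookkeeping (that $S_g$ is nonempty by \ref{Urysohn} and excludes $\emptyset$, so the image is genuinely the singleton $\{\mathsf{c}(g)\}$) is a harmless refinement of the same argument.
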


\begin{proof}
The proof is exactly like that of \autoref{GroupoidRecovery}.  The only extra thing to note is $\mathsf{c}[S_g]=\{\mathsf{c}(g)\}$, which is immediate from the fact that $g\in\mathrm{dom}(a)$ implies $\mathsf{c}(a)=\mathsf{c}(g)$, by the definition on $\mathsf{c}$ on $S\setminus\{\emptyset\}$.
\end{proof}

As in \autoref{GroupoidIsomorphism}, the categorical import of \autoref{GradedGroupoidRecovery} can be rephrased without reference to ultrafilters or the precise method of reconstruction.
If $G'$, $Y'$ and $\mathsf{c}'$ also satisfy \autoref{assss} (with the same $\Gamma$) then $\mathsf{c}'$ also yields a grading on any $S'\subseteq{}_{Y'}\mathcal{B}^\circ(G')$ or, more precisely, on $S'\setminus\{\emptyset\}$.  We call an isomorphism $\phi:S\rightarrow S'$ \emph{graded} if $\mathsf{c}(a)=\mathsf{c}'(\phi(a))$, for all $a\in S\setminus\{\emptyset\}$.

\begin{cor}\label{GradedGroupoidIsomorphism}
If $S\subseteq{}_Y\mathcal{B}^\circ_\mathsf{h}(G)$ and $S'\subseteq{}_{Y'}\mathcal{B}^\circ_\mathsf{h}(G')$ are diagonally isomorphic graded bumpy semigroups then $G$ and $G'$ are isomorphic graded \'etale groupoids.
\end{cor}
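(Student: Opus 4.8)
The plan is to reduce to the ungraded case already settled in \autoref{GroupoidIsomorphism} and then check that the resulting groupoid isomorphism automatically respects the gradings. Since a graded bumpy semigroup is in particular bumpy, and since \autoref{assss} subsumes \autoref{Gass} for both $G,Y$ and $G',Y'$, \autoref{GroupoidIsomorphism} applies directly to the underlying diagonal-preserving isomorphism $\phi\colon S\to S'$, yielding an \'etale groupoid isomorphism $\widetilde{\phi}\colon G\to G'$. It then remains only to show that $\widetilde{\phi}$ is grading-preserving, i.e.\ that $\mathsf{c}(g)=\mathsf{c}'(\widetilde{\phi}(g))$ for all $g\in G$, whence $\widetilde{\phi}$ is an isomorphism of graded \'etale groupoids.

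To verify this I would recall from the proof of \autoref{GroupoidIsomorphism} that $\widetilde{\phi}$ arises from the ultrafilter correspondence $U\mapsto\phi[U]$, under the identifications $g\mapsto S_g$ and $U'\mapsto\bigcap_{a'\in U'}\mathrm{dom}(a')$ supplied by \autoref{GradedGroupoidRecovery}; concretely this says $\phi[S_g]=S'_{\widetilde{\phi}(g)}$ for every $g\in G$. Now fix $g\in G$. By \ref{Urysohn} the ultrafilter $S_g$ is nonempty, so pick any $a\in S_g$; then $a\neq\emptyset$ and, by \autoref{GradedGroupoidRecovery}, $\mathsf{c}(a)=\mathsf{c}(g)$. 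Since $\phi$ is a graded isomorphism we have $\mathsf{c}'(\phi(a))=\mathsf{c}(a)$, while $\phi(a)\in\phi[S_g]=S'_{\widetilde{\phi}(g)}$ gives $\mathsf{c}'(\phi(a))=\mathsf{c}'(\widetilde{\phi}(g))$ by \autoref{GradedGroupoidRecovery} applied to $S'$. Chaining these equalities yields $\mathsf{c}(g)=\mathsf{c}'(\widetilde{\phi}(g))$, as required.

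The only point demanding care is that the grading is defined solely on $S\setminus\{\emptyset\}$ and $S'\setminus\{\emptyset\}$, so one must ensure the elements being compared are genuinely nonempty. This is guaranteed because every $a\in S_g$ satisfies $g\in\mathrm{int}([Y^\times]a)\subseteq\mathrm{dom}(a)$, hence $a\neq\emptyset$, and because $\phi$, being an isomorphism, carries the absorbing element $\emptyset$ to $\emptyset'$ and therefore nonempty elements to nonempty elements. I expect this bookkeeping to be the only genuine obstacle; everything else follows immediately from the correspondence $\phi[S_g]=S'_{\widetilde{\phi}(g)}$ together with the recovery of the grading in \autoref{GradedGroupoidRecovery}.
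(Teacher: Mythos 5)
Your proposal is correct and follows essentially the same route as the paper: the paper likewise passes through the ultrafilter correspondence $U\mapsto\phi[U]$ (i.e.\ $\phi[S_g]=S'_{\widetilde{\phi}(g)}$) and invokes \autoref{GradedGroupoidRecovery}, whose extra conclusion $\mathsf{c}[S_g]=\{\mathsf{c}(g)\}$ is exactly what you use to see that $\widetilde{\phi}$ preserves the grading. The only cosmetic difference is that you factor the argument through the ungraded \autoref{GroupoidIsomorphism} and then check gradings separately, whereas the paper records the grading equivariance $\mathsf{c}'[\phi[U]]=\mathsf{c}[U]$ of the ultrafilter map directly; your bookkeeping about the empty function is also sound, since $\emptyset\notin S_g$ for any $g$.
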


\begin{proof}
If $\phi:S\mapsto S'$ is a diagonal-preserving graded semigroup isomorphism then $U\mapsto\phi[U]$ is a homeomorphism from $\mathcal{U}(S)$ onto $\mathcal{U}(S')$ with $\phi[U^*]=\phi[U]^*$, $\phi[(UV)^\prec]=(\phi[U]\phi[V])^\prec$ and $\mathsf{c}'[\phi[U]]=\mathsf{c}[U]$.  Thus $G$ and $G'$ are isomorphic graded \'etale groupoids, by \autoref{GradedGroupoidRecovery}.
\end{proof}

\begin{cor}\label{GradedGroupoidIsomorphism2}
Assume $G'$, $Y'$ and $\mathsf{c}'$ also satisfy \autoref{assss}.  Further assume we are given a semigroups $A\subseteq{}_Y\mathcal{O}_\mathsf{h}(G)$ and $A'\subseteq{}_{Y'}\mathcal{O}_\mathsf{h}(G')$ on which the product is given by \eqref{FunctionProduct} whenever $\mathrm{dom}(a)$ or $\mathrm{dom}(b)$ is a bisection.  We define $Z,D,S,C,N,M\subseteq A$ and $Z',D',S',C',N',M'\subseteq A'$ as in \autoref{Aass}.  If
\begin{enumerate}
\item $D$ and $D'$ are exhaustive,
\item $S=N$ is $Z$-bumpy and $S'=N'$ is $Z'$-bumpy, and
\item $A$ and $A'$ are diagonally isomorphic graded semigroups
\end{enumerate}
then $G$ and $G'$ are isomorphic graded \'etale groupoids.
\end{cor}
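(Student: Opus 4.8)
The plan is to mirror the proof of \autoref{GroupoidIsomorphism2} almost verbatim, simply replacing its appeal to \autoref{GroupoidIsomorphism} with the graded analogue \autoref{GradedGroupoidIsomorphism} and checking that the grading survives each intermediate step. Concretely, suppose $\phi\colon A\to A'$ is a diagonal-preserving graded semigroup isomorphism. The first step is to pass to the normaliser subsemigroup. Since $D$ is exhaustive, \autoref{ZE} gives $Z=\mathsf{Z}(D)$, and likewise $Z'=\mathsf{Z}(D')$; because $\phi$ is diagonal-preserving it carries $D$ onto $D'$, hence $\mathsf{Z}(D)$ onto $\mathsf{Z}(D')$ and therefore $\mathsf{N}(Z)=\mathsf{N}(\mathsf{Z}(D))$ onto $\mathsf{N}(Z')=\mathsf{N}(\mathsf{Z}(D'))$. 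Thus $\phi$ restricts to a diagonal-preserving isomorphism $\mathsf{N}(Z)\to\mathsf{N}(Z')$, and, being the restriction of a graded map, this restriction is automatically graded.

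Next I would invoke the bumpiness machinery. By hypothesis $S=N$ is $Z$-bumpy and $S'=N'$ is $Z'$-bumpy, so \autoref{MZbumpy} shows that $\mathsf{N}(Z)$ and $\mathsf{N}(Z')$ are themselves $Z$-bumpy (in particular bumpy) semigroups. The one extra point to record in the graded setting is that these normaliser semigroups take their values on \emph{homogeneous} bisections: from $S=N$ and $S\subseteq{}_Y\mathcal{B}^\circ_\mathsf{h}(G)$ we obtain the chain $\mathsf{N}(Z)\subseteq N=S\subseteq{}_Y\mathcal{B}^\circ_\mathsf{h}(G)$ (and similarly for the primed objects), so every element of $\mathsf{N}(Z)$ has a homogeneous open-bisection domain. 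Hence $\mathsf{N}(Z)$ and $\mathsf{N}(Z')$ are bona fide graded bumpy semigroups, and the grading $\mathsf{c}$ on $\mathsf{N}(Z)\setminus\{\emptyset\}$ is just the one inherited from $S\setminus\{\emptyset\}$ via $\mathsf{c}[\mathrm{dom}(a)]=\{\mathsf{c}(a)\}$.

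With $\mathsf{N}(Z)\subseteq{}_Y\mathcal{B}^\circ_\mathsf{h}(G)$ and $\mathsf{N}(Z')\subseteq{}_{Y'}\mathcal{B}^\circ_\mathsf{h}(G')$ now exhibited as diagonally isomorphic graded bumpy semigroups, \autoref{GradedGroupoidIsomorphism} immediately yields that $G$ and $G'$ are isomorphic graded \'etale groupoids, completing the argument.

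The main obstacle is conceptual rather than computational: one must be confident that nothing in the passage $A\rightsquigarrow\mathsf{N}(Z)$ disturbs the grading. This rests on the observation that the grading on $A\setminus\{\emptyset\}$ is pinned down purely by the homogeneous domains through $\mathsf{c}[\mathrm{dom}(a)]=\{\mathsf{c}(a)\}$, so the restricted $\phi$ and the reconstruction of \autoref{GradedGroupoidRecovery} see exactly the same grading data as in the ungraded case, with the single scalar $\mathsf{c}(a)$ carried along formally. Consequently no new estimates or constructions beyond those already established for \autoref{GroupoidIsomorphism2} are needed.
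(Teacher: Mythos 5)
Your proposal is correct and follows essentially the same route as the paper's own proof: restrict $\phi$ to $\mathsf{N}(Z)=\mathsf{N}(\mathsf{Z}(D))$ using \autoref{ZE}, invoke \autoref{MZbumpy} to see that $\mathsf{N}(Z)$ and $\mathsf{N}(Z')$ are $Z$-bumpy, and conclude via \autoref{GradedGroupoidIsomorphism}. The extra details you supply (that $\mathsf{N}(Z)\subseteq N=S$ forces homogeneous domains, and that the restriction of a graded isomorphism is graded) are exactly the points the paper leaves implicit.
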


\begin{proof}
If $\phi:A\rightarrow A'$ is a diagonal-preserving graded semigroup isomorphism, so its restriction to $\mathsf{N}(Z)=\mathsf{N}(\mathsf{Z}(D))$ (see \autoref{ZE}), which is thus diagonally isomorphic to $\mathsf{N}(Z')=\mathsf{N}(\mathsf{Z}(D'))$.  By \autoref{MZbumpy}, $\mathsf{N}(Z)$ and $\mathsf{N}(Z')$ are $Z$-bumpy.  By \autoref{GradedGroupoidIsomorphism}, $G$ and $G'$ are isomorphic graded \'etale groupoids.
\end{proof}

Note if $O\in \mathcal{I}^\circ_\mathsf{h}(G)$ is a homogeneous open isosection then, for some unit $\gamma\in\Gamma^0$,
\[OO^{-1}\cup O^{-1}O\subseteq\mathrm{int}(G^\mathrm{iso}\cap[\gamma]\mathsf{c}).\]
So if $[\gamma]\mathsf{c}$ is effective, for all $\gamma\in\Gamma^0$, then $\mathcal{I}^\circ_\mathsf{h}(G)=\mathcal{B}^\circ_\mathsf{h}(G)$ and hence $S=N$.  This can be useful when we have a diagonal-preserving isomorphism $\phi:A\rightarrow A'$ even if $\phi$ does not respect the grading.  Indeed, as long as $[\gamma]\mathsf{c}$ and $[\gamma]\mathsf{c}'$ are effective, for all $\gamma\in\Gamma^0$, we can then still apply the ungraded reconstruction result in \autoref{GroupoidIsomorphism2} to show that $G$ and $G'$ are isomorphic (ungraded) \'etale groupoids.

For our final graded result, we weaken our assumption on $Y$ and $Y'$ as in \autoref{Semigroupoids}.

\begin{cor}\label{GradedGroupoidIsomorphism3}
Assume $G'$, $Y'$ and $\mathsf{c}'$ also satisfy \autoref{assss}, except that $Y$ and $Y'$ are only assumed to be indecomposable semigroupoids.  Further assume we are given a semigroups $A\subseteq{}_Y\mathcal{O}_\mathsf{h}(G)$ and $A'\subseteq{}_{Y'}\mathcal{O}_\mathsf{h}(G')$ on which the product is given by \eqref{FunctionProduct2} whenever $\mathrm{dom}(a)$ or $\mathrm{dom}(b)$ is a bisection.  We define subsets $Z,D,S,C,N\subseteq A$ and $Z',D',S',C',N'\subseteq A'$ as in \autoref{Aass}.  If
\begin{enumerate}
\item $\mathrm{dom}(ab)\subseteq\mathrm{dom}(a)\mathrm{dom}(b)$, whenever $a,b\in N$ or $a,b\in N'$,
\item $D$ and $D'$ are exhaustive,
\item $S$ and $S'$ are compact-bumpy,
\item $G$ and $G'$ are ample
\item $C^\mathsf{R}_Z\subseteq S$ and $C'^{\mathsf{R}}_{Z'}\subseteq S'$, and
\item $A$ and $A'$ are diagonally isomorphic graded semigroups
\end{enumerate}
then $G$ and $G'$ are isomorphic \'etale groupoids.
\end{cor}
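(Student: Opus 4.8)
The plan is to follow the proof of \autoref{GroupoidIsomorphism3} verbatim while carrying the grading along, and then to invoke the graded reconstruction result \autoref{GradedGroupoidIsomorphism} in place of its ungraded counterpart. Since hypotheses (1)--(5) here coincide with those of the corollary yielding \eqref{R}, that corollary applies unchanged to both $A$ and $A'$, giving the purely algebraic descriptions $R=\mathsf{N}(\mathsf{Z}(D)^\mathsf{R})^\mathsf{R}_{\mathsf{Z}(D)}$ and $R'=\mathsf{N}(\mathsf{Z}(D')^\mathsf{R})^\mathsf{R}_{\mathsf{Z}(D')}$. As each of the operations $\mathsf{Z}(\cdot)$, $(\cdot)^\mathsf{R}$, $\mathsf{N}(\cdot)$ and $(\cdot)^\mathsf{R}_{\mathsf{Z}(D)}$ is defined solely from the semigroup product together with the diagonal, any diagonal-preserving isomorphism $\phi\colon A\to A'$ (which satisfies $\phi[D]=D'$) must send $R$ onto $R'$, so $\phi$ restricts to a diagonal-preserving isomorphism of $R$ onto $R'$.

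Next I would check that this restriction is graded. Because $A\subseteq{}_Y\mathcal{O}_\mathsf{h}(G)$, every element of $A$ has homogeneous domain; combined with \autoref{RZS}, which identifies $R=S^\mathsf{R}_Z$ as those elements of $S$ with compact domain and range in $Y^\times$, this shows $R\subseteq{}_{Y^\times}\mathcal{B}^\circ_\mathsf{h}(G)$. Hence $\mathsf{c}$ induces a grading on $R\setminus\{\emptyset\}$ exactly as it does on $S\setminus\{\emptyset\}$ earlier in this section. Since $\phi$ is a graded isomorphism of $A$, the equality $\mathsf{c}(a)=\mathsf{c}'(\phi(a))$ holds for every nonempty $a\in R$, so $\phi|_R$ is a diagonal-preserving \emph{graded} isomorphism of $R$ onto $R'$.

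It then remains to recognise $R$ and $R'$ as graded bumpy semigroups to which \autoref{GradedGroupoidIsomorphism} applies. Just as in \autoref{GroupoidIsomorphism3}, since $G$ is ample and $S$ is compact-bumpy the semigroup $R$ is itself compact-bumpy, and likewise $R'$. The one point worth stressing is that the values of elements of $R$ lie in the group $Y^\times$, which is a bona fide $1$-cancellative semigroup even though $Y$ is merely an indecomposable semigroupoid; thus \autoref{assss} is satisfied with $Y^\times$ in place of $Y$, and $R$, $R'$ genuinely fall under the hypotheses of \autoref{GradedGroupoidIsomorphism}. Applying that corollary to the diagonally isomorphic graded bumpy semigroups $R$ and $R'$ yields that $G$ and $G'$ are isomorphic graded \'etale groupoids, which in particular are isomorphic as \'etale groupoids.

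The main obstacle is not any new mathematics but the bookkeeping needed to confirm that the algebraic recipe producing $R$ from $A$ is preserved by $\phi$ \emph{together with} the grading. Both halves are automatic: the recipe is purely multiplicative and diagonal, so it is an isomorphism invariant, while the grading on $R$ is determined entirely by its homogeneous domains, so a graded $\phi$ restricts to a graded map with no extra argument required.
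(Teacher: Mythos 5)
Your proposal is correct and follows essentially the same route as the paper: both use \eqref{R} to identify $R$ and $R'$ algebraically from the product and the diagonal, restrict the diagonal-preserving graded isomorphism to $R$, note that $R$ and $R'$ are compact-bumpy (with values in the genuine semigroups $Y^\times$, $Y'^\times$), and conclude via \autoref{GradedGroupoidIsomorphism}. Your extra bookkeeping (checking the restriction is graded via homogeneity of domains) just makes explicit what the paper leaves implicit.
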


\begin{proof}
If $\phi:A\rightarrow A'$ is a diagonal-preserving graded semigroup isomorphism, so its restriction to $R=\mathsf{N}(\mathsf{Z}(D)^\mathsf{R})^\mathsf{R}_{\mathsf{Z}(D)}$ (see \eqref{R}), which is thus diagonally graded isomorphic to $R'=\mathsf{N}(\mathsf{Z}(D')^\mathsf{R})^\mathsf{R}_{\mathsf{Z}(D')}$.  As $G$ is ample and $S$ is compact-bumpy, so is $R$.  Likewise, $R'$ is compact-bumpy so $G$ and $G'$ are isomorphic graded \'etale groupoids, by \autoref{GradedGroupoidIsomorphism}.
\end{proof}

Note \autoref{GradedGroupoidIsomorphism3} generalises \cite[Theorem 5.6]{Steinberg2019} in that our gradings can take values in a groupoid rather than just a group (as well as in the other respects mentioned after \autoref{GroupoidIsomorphism3}).

\newpage

\bibliography{maths}{}
\bibliographystyle{alphaurl}

\end{document}